\documentclass[a4paper,11pt]{article}

\usepackage{amsmath}
\usepackage{amsfonts}
\usepackage{amssymb}
\usepackage{amsthm}
\usepackage{mathrsfs}
\usepackage{graphicx}
\usepackage{color}
\usepackage[authoryear,sort,comma]{natbib}
\usepackage[margin=0.8in]{geometry}
\usepackage[titletoc]{appendix}
\usepackage{bm}
\usepackage{algorithm}
\usepackage{algpseudocode}
\usepackage{algpascal}
\usepackage{url}
\usepackage{afterpage}
\usepackage{enumitem} 
\usepackage{multirow}
\usepackage{booktabs}
\usepackage{subfigure}

\newlength{\vslength}
\numberwithin{equation}{section}

\newtheorem{theorem}{Theorem}[section]
\newtheorem{lemma}[theorem]{Lemma}
\newtheorem{proposition}[theorem]{Proposition}
\newtheorem{corollary}[theorem]{Corollary}
\newtheorem*{remark}{\bf Remark}

\def\cA{\mathcal A}

\def\cF{\mathcal F}

\def\cN{\mathcal N}

\def\cQ{\mathcal Q}

\def\cS{\mathcal S}
\def\cT{\mathcal T}
\def\cU{\mathcal U}

\newcommand{\bA}{{\bf A}}

\newcommand{\bC}{{\bf C}}
\newcommand{\bD}{{\bf D}}

\newcommand{\bF}{{\bf F}}

\newcommand{\bH}{{\bf H}}
\newcommand{\bI}{{\bf I}}

\newcommand{\bV}{{\bf V}}

\newcommand{\bX}{{\bf X}}

\newcommand{\bY}{{\bf Y}}

\newcommand{\bbE}{{\mathbb E}}

\newcommand{\bbN}{{\mathbb N}}
\newcommand{\bbP}{{\mathbb P}}
\newcommand{\bbR}{{\mathbb R}}

\newcommand{\bSigma}{\bm{\Sigma}}

\newcommand{\bOmega}{\bm{\Omega}}

\newcommand{\eg}{{\it e.g.}}

\newcommand{\bc}{\begin{center}}
\newcommand{\ec}{\end{center}}
\newcommand{\be}{\begin{equation}}
\newcommand{\ee}{\end{equation}}
\newcommand{\ba}{\begin{array}}
\newcommand{\ea}{\end{array}}
\newcommand{\bean}{\setlength\arraycolsep{1pt}\begin{eqnarray*}}
\newcommand{\eean}{\end{eqnarray*}}
\newcommand{\bea}{\setlength\arraycolsep{1pt}\begin{eqnarray}}
\newcommand{\eea}{\end{eqnarray}}
\newcommand{\ben}{\begin{enumerate}}
\newcommand{\een}{\end{enumerate}}
\newcommand{\bed}{\begin{itemize}}
\newcommand{\eed}{\end{itemize}}

\DeclareMathOperator*{\argmin}{argmin}

\RequirePackage{amsthm,amsmath,amsfonts,amssymb}
\RequirePackage[authoryear]{natbib}
\RequirePackage[colorlinks,citecolor=blue,urlcolor=blue]{hyperref}
\RequirePackage{graphicx}
\usepackage{comment}
\usepackage{anyfontsize}

\usepackage{newtxtext}
\usepackage{newtxmath}
\usepackage{mathrsfs}
\usepackage{microtype}
\UseMicrotypeSet[protrusion]{basicmath}

\usepackage{amsthm}
\usepackage{mathrsfs}
\usepackage{graphicx}
\usepackage{color}
\usepackage{bm}
\usepackage{algorithm}
\usepackage{algpseudocode}
\usepackage{algpascal}
\usepackage{url}
\usepackage{afterpage}
\usepackage{enumitem} 
\usepackage{multirow}
\usepackage{booktabs}
\usepackage{subfigure}

\usepackage{mathtools}
\usepackage{dsfont}
\usepackage{alphabeta}

\usepackage{xr}

\usepackage{import}

\usepackage{mathtools}
\usepackage{dsfont}
\usepackage[utf8]{inputenc}
\usepackage[greek,english]{babel}
\usepackage{alphabeta}
\usepackage{caption}

\usepackage{aligned-overset}

\newcommand{\rmd}{\mathrm{d}}
\newcommand{\rmx}{\mathrm{x}}

\newcommand{\bbF}{\mathbb F}

\newcommand{\OVB}{\textnormal {\rm \texttt{OVB}}}

\newcommand{\symmPD}{\cS_{\texttt{++}}^{p}}
\newcommand{\symmPSD}{\cS_{\texttt{+}}^{p}}

\newcommand{\app}{{\rm app}}

\newcommand{\ML}{\textnormal {\rm \texttt{ML}}}
\newcommand{\MAP}{\textnormal {\rm \texttt{MAP}}}

\newcommand{\BvM}{\textnormal {\rm \texttt{BvM}}}
\newcommand{\SGD}{\textnormal {\rm \texttt{SGD}}}
\newcommand{\ASGD}{\textnormal {\rm \texttt{ASGD}}}
\newcommand{\wSGD}{\textnormal {\rm \texttt{wSGD}}}

\newcommand{\BOO}{\textnormal {\rm \texttt{BOO}}}

\newcommand{\exit}{\textnormal {\rm exit}}

\newcommand{\scrE}{\mathscr{E}}
\newcommand{\logdet}{{\rm logdet}}

\theoremstyle{definition}
\newtheorem*{assumption*}{Assumption}

\title{Bayesian online learning in the one-pass regime: Frequentist validity and uncertainty quantification}
\author{Jeyong Lee \footnote{Weierstrass Institute for Applied Analysis and Stochastics (WIAS)}  
\quad Junhyeok Choi\footnote{Department of Industrial and Management Engineering, Pohang University of Science and Technology (POSTECH). \\ 
Emails: \texttt{lee@wias-berlin.de}, \ \ \texttt{(cjunh4810, dgkim, mchae)@postech.ac.kr}
}
\quad Dongguen Kim$^\dagger$
\quad Minwoo Chae$^\dagger$}
\date{\today}

\begin{document}

\maketitle
    \addtocontents{toc}{\protect\setcounter{tocdepth}{-1}}

\begin{abstract}
    Bayesian online learning provides a coherent framework for sequential inference. However, its theoretical understanding remains limited, particularly in the one-pass setting. Existing theoretical guarantees typically require the mini-batch sample size to diverge, a condition that fails in the one-pass regime. In this paper, we propose a new Bayesian online learning algorithm tailored to the one-pass setting, which incorporates a warm-start phase to ensure stable sequential updates. For this algorithm, we show that the sequentially updated posterior attains the optimal convergence rate. Building on this, we establish an online analogue of the Bernstein--von Mises theorem, which guarantees valid uncertainty quantification without diverging mini-batch sample sizes. Our analysis is based on a novel theoretical framework that differs fundamentally from existing approaches in the online learning literature. Numerical experiments on generalized linear models show that the proposed method matches the performance of the batch estimator while outperforming existing online procedures.
\end{abstract}

\section{Introduction} 

Online learning refers to a statistical framework for settings where data arrive sequentially, typically arising in large-scale or streaming environments. Unlike batch learning, which constructs an estimator using the entire dataset at once, online learning updates parameters sequentially as new observations become available. Each update relies only on the previous estimate and the current observation without revisiting past data. 
In practice, online updates are often performed using small mini-batches of data. A particularly stringent formulation is the one-pass setting, in which each observation is processed individually and contributes exactly once to the update. 

Online learning methods have been extensively developed across a wide range of statistical models. Representative examples include topic models \citep{hoffman2010online, wang2011online, kim2016online}, Dirichlet process mixtures \citep{jeong2023online}, matrix factorization \citep{mairal2010online}, survival analysis \citep{xue2020online, wu2021online, choi2025online}, and quantile regression \citep{chen2019quantile, lee2024fast}.

From a theoretical perspective, however, the online learning setting imposes structural constraints that fundamentally distinguish it from classical batch learning. In particular, since data arrive sequentially, the total sample size is typically unknown, making it difficult to tune hyperparameters that depend on a fixed sample size. Moreover, as past observations are not revisited, errors or biases introduced at early stages may propagate through subsequent updates and affect the entire trajectory of the estimator. Consequently, the statistical analysis of online estimators departs from the classical M-estimation framework, necessitating new analytical tools. Understanding the theoretical behavior of online estimators has therefore become an important problem.

In the frequentist literature, stochastic gradient descent (SGD) is a standard approach to online learning. Due to its recursive structure and computational efficiency, SGD, originally introduced by \citet{robbins1951stochastic}, and its variants have been extensively studied. As pioneering works, \citet{ruppert1988efficient} and \citet{polyak1992acceleration} independently proposed averaging schemes for SGD iterates, with \citet{polyak1992acceleration} establishing asymptotic normality of the resulting estimator. More recently, \citet{toulis2017SGD} introduced stabilized variants with improved finite-sample performance, and subsequent works have focused on establishing valid uncertainty quantification for SGD-based estimators \citep{chen2020SGD, zhu2023online, lee2022fast, fang2018online}. Importantly, these results extend naturally to the one-pass setting while retaining their theoretical guarantees.

On the other hand, the Bayesian framework provides a conceptually appealing alternative, as it updates a distribution over the parameter space in a sequential manner. The posterior distribution at each step can be used as the prior for the next step, allowing for a coherent learning procedure in the online setting. However, except in conjugate models, exact posterior updates are generally intractable, necessitating approximation at each step \citep{opper1999bayesian, solla1999online}. This has led to the development of Bayesian online learning methods that approximate the posterior within a tractable family, often using variational techniques \citep{broderick2013streaming, lin2013online, nguyen2018variational, lambert2022recursive, lambert2023limited}.

Despite these methodological advances, the theoretical understanding of Bayesian online learning remains limited. Most existing works focus on algorithmic design, while rigorous statistical guarantees are relatively scarce. To address this gap, recent work by \citet{lee2026online} has shown that sequentially updated distributions can be asymptotically equivalent to the full posterior based on the entire dataset. However, such results rely on diverging mini-batch sample sizes and therefore do not fully capture the one-pass setting. Moreover, empirical evidence in \citet{lee2026online} indicates that statistical efficiency can deteriorate substantially when the mini-batch size is small. These observations highlight the need for Bayesian online learning methods tailored to the one-pass setting, together with a rigorous theoretical analysis of their statistical properties.

Motivated by this gap, we propose a Bayesian online learning procedure tailored to the one-pass setting and establish its theoretical validity. The proposed method is inspired by variational approximation, where the expected log-likelihood is replaced by a local quadratic approximation, leading to a tractable update with a closed-form expression. It is worth noting that \citet{lee2026online} rely on an iterative optimization procedure with Monte Carlo approximations at every update, which can become computationally burdensome over long sequences of online updates. In contrast, our method admits an analytic update formula, enabling rapid updates by avoiding an iterative procedure at each step. This substantially reduces the computational cost and makes the procedure particularly attractive for one-pass learning. In addition, the proposed method does not require a user-specified learning-rate schedule, unlike SGD-based methods, thereby avoiding a potentially sensitive tuning parameter and making it easier to deploy in practice.

However, the local quadratic approximation is valid only in a neighborhood of the true parameter, and errors arising in early updates may propagate through subsequent updates in the one-pass setting. This motivates initializing the recursion with a sufficiently accurate estimate. To this end, the method incorporates a warm-start phase that ensures the estimator enters a local neighborhood before the online recursion begins.

We further establish an online analogue of the Bernstein--von Mises (BvM) theorem by analyzing the asymptotic behavior of the sequentially updated posterior distributions. This result shows that, even in the fully online regime, the proposed posterior achieves asymptotic efficiency and provides valid frequentist uncertainty quantification, without requiring diverging mini-batch sizes, in contrast to \citet{lee2026online}. To establish this result, we develop a theoretical framework adapted to the algorithmic structure of the procedure, enabling a refined non-asymptotic analysis. This framework differs substantially from existing techniques in the SGD literature and avoids the strong structural assumptions typically imposed in their analyses. See Section \ref{sec:comparison} for details.

Finally, we complement our theoretical findings with extensive numerical experiments, demonstrating that the proposed method achieves strong empirical performance compared to existing approaches across a range of settings. In particular, the experiments highlight the importance of the warm-start phase, as omitting this phase can lead to substantially slower convergence in certain settings. Nevertheless, the results also indicate that stable performance can often be achieved with relatively short warm-start phases, and in logistic regression settings the proposed procedure continues to perform well even without a warm-start phase. See Section \ref{sec:experiments} for details.

The remainder of this paper is organized as follows. In the following subsections, we introduce our setup and the basic notation used throughout the paper. Section \ref{sec:Bayesian_online_learning} presents the general framework of Bayesian online learning and introduces the proposed one-pass algorithm. Section \ref{sec:theory} develops the main theoretical results, including the convergence analysis of the estimator and the establishment of the online BvM theorem. Numerical experiments are presented in Section \ref{sec:experiments}, and concluding remarks are given in Section 5. All proofs and additional technical details are deferred to the Appendix.


\subsection{Setup and notations}

Let $(Y_t)_{t \in \bbN}$ be a sequence of observations. Let $\bbF = (\cF_t)_{t \in \bbN}$ be the filtration given by $\cF_t = \sigma(Y_1, ..., Y_t)$ for any $t \in \bbN$.
Let $\Theta = \bbR^p$.
For $t \in \bbN$, let $p_{t, \theta}(\cdot)$ be the probability density function for $Y_t$ parametrized by $\theta \in \Theta$. Let $\ell_t(\theta) = -\log p_{t, \theta}(Y_t)$ be the negative log-likelihood function. 
Let $\bbP_{\theta}^{(t)}$ be the joint probability measure corresponding to the product density function 
$
    (y_1, ..., y_t) \mapsto \prod_{s=1}^{t} p_{s ,\theta}(y_s).
$
Here, we assume that our model is correctly specified; that is, for any $t \in \bbN$, $(Y_s)_{s =1}^{t}$ is generated from $\bbP_{\theta_{\star}}^{(t)}$ for some true parameter $\theta_{\star} \in \Theta$. For notational convenience, we denote $\bbP_{\theta_{\star}} = \bbP_{\theta_{\star}}^{(t)}$ whenever the value of $t$ is clear from the context. Also, let $\bbE$ denote the expectation under $\bbP_{\theta_{\star}}$.

Let $\bbN_0 = \bbN \cup \{ 0 \}$.
For $a, b \in \bbR$, $a \vee b$ and $a \wedge b$ denote the maximum and minimum of $a$ and $b$, respectively.
For $x \in \mathbb{R}$, let $\lceil x \rceil$ denote the smallest integer greater than or equal to $x$.
For two positive sequences $(a_n)$ and $(b_n)$, $a_n \lesssim b_n$ (or $a_n = O(b_n)$) means that $a_n \leq C b_n$ for some constant $C \in (0, \infty)$.
Also, $a_n \asymp b_n$ indicates that $a_n \lesssim b_n$ and $b_n \lesssim a_n$.
The notation $a_n \ll b_n$ (or $a_n = o(b_n)$) implies that $a_n / b_n \rightarrow 0$ as $n \rightarrow \infty$. We use the standard $O_P$ and $o_P$ notation for stochastic order symbols; see \citet{van2000asymptotic}.
With a slight abuse of notation, for $\nu_n \in \bbR^p$ satisfying $\| \nu_n \|_2 = o(1)$, we write $\nu_n = o(1)$.
For $1 \leq q \leq \infty$, $\|\cdot\|_{q}$ indicates the $\ell_q$-norm of a vector. 
For $m \in \bbN$, let $[m] = \{1, 2, ..., m\}$. 

For a two times differentiable function $f : \bbR^{p} \rightarrow \bbR$, let
\begin{align*}
    &\nabla f(\theta) = \left( \dfrac{\partial}{\partial \theta_{i_1}} f(\theta) \right)_{i_1 \in [p]} \in \bbR^{p},  \quad 
    \nabla^2 f(\theta) = \left( \dfrac{\partial^2}{\partial \theta_{i_1} \partial \theta_{i_2}} f(\theta) \right)_{i_1, i_2 \in [p]} \in \bbR^{p \times p}.
\end{align*}
For $z = (z_j)_{j \in [p]} \in \bbR^{p}$ and $k \geq 2$, let
\begin{align*}
     z^{\otimes k} = \left( z_{i_1} \cdots z_{i_k} \right)_{i_1, ..., i_k \in [p]} \in \bbR^{p^{k}}.
\end{align*}
For two $k$-order tensors $\bA = (A_{i_1, ..., i_k})_{i_1, ..., i_k \in [p]} \in \bbR^{p^{k}}$ and $\mathbf{B} = (B_{i_1, ..., i_k})_{i_1, ..., i_k \in [p]} \in \bbR^{p^{k}}$, let
\begin{align*}
    \langle \bA, \mathbf{B} \rangle = \sum_{i_1, ..., i_k \in [p]} A_{i_1, ..., i_k} B_{i_1, ..., i_k}.
\end{align*}

Let $\symmPD$ and $\symmPSD$ denote the sets of $p \times p$ symmetric positive definite and symmetric positive semi-definite matrices, respectively. Let $\bI_{p} \in \bbR^{p \times p}$ denote the identity matrix.
For a matrix $\mathbf{A} = (a_{ij}) \in \bbR^{n \times p}$, let $\lambda_{\operatorname{min}}(\bA)$ and $\lambda_{\operatorname{max}}(\bA)$ denote the smallest and largest singular values of $\bA$, respectively.
For simplicity, $\| \bA \|_{2}$ will often be used interchangeably with $\lambda_{\operatorname{max}}(\bA)$.
Let $\| \bA \|_{\rm F} = (\sum_{ij} a_{ij}^{2})^{1/2}$ be the Frobenius norm.
For two distinct matrices $\bA, \mathbf{B} \in \bbR^{n \times n}$, $\bA \succeq \mathbf{B}$ means $\bA - \mathbf{B}$ is positive semi-definite matrix.
For a $k$-th order tensor $\bA = (A_{i_1, ..., i_k})_{i_1, ..., i_k \in [p]} \in \bbR^{p^{k}}$, define the operator norm of $\bA$ by
\begin{align*}
    \left\| \bA \right\|_{2} = \sup_{u_{1}, \ldots, u_{k} \in \cU} \left| \langle \bA, u_{1} \otimes ... \otimes u_{k} \rangle \right|,
\end{align*}
where $\cU = \{ u \in \bbR^{p} : \| u \|_{2} = 1 \}$.

For $\sigma^{2} \geq 0$, a random vector $X \in \bbR^{p}$ is said to be $\text{SubG}(\sigma^2)$ if
\begin{align*}
    \log \bbE \exp \big( \alpha^{\top} (X - \bbE X) \big) \leq \sigma^{2} \left\| \alpha \right\|_{2}^{2}/2, \quad \forall \alpha \in \bbR^{p}
\end{align*}
We adopt the convention that $\inf \emptyset = \infty$.

\begin{table}[t!]
\centering
\caption{Summary of important notations}
\label{table:notations_main}
\renewcommand{\arraystretch}{1} 
\begin{tabular}{c c || c c || cc}
\hline
\textbf{Notation} & \textbf{Location} & \textbf{Notation} & \textbf{Location} & \textbf{Notation} & \textbf{Location} \\
\hline\hline
$d_{V}(\cdot,\cdot)$
  & \eqref{def:TV_distance}
  & $V_t, \, \Delta_t$
  & \eqref{def:V_t_Delta_t} 
  & $\theta_t, \bOmega_t$
  & \eqref{Alg} \\
  $\hat \theta_t^{\MAP}, \hat \theta_t^{\ML}$
  & \eqref{def:batch_map}, \eqref{def:MLE} 
  & $\underline \lambda_0, \overline \lambda_0$
  & \eqref{def:prior_precision}
  & $\Theta_r, \, r$
  & \eqref{def:local_set} \\
$t_{0}, t_{\exit}$
  & \eqref{Alg}, \eqref{def:exit_time}
  & $g_t, \bH_t$
  & \eqref{Alg} 
  & $L_{1:t}(\cdot), \widetilde L_{s}(\cdot)$
  & \eqref{def:batch_loss} \\
  $\bF_{t, \theta}$
  & \eqref{A3-b}
  & $\epsilon_{\app, t}$
  & \eqref{def:epsilon_app}
  & $\widetilde \ell_t(\cdot)$
  & \eqref{def:tilde_loss} \\
\hline
\end{tabular}
\end{table}

\section{Bayesian online learning algorithm} \label{sec:Bayesian_online_learning}


\subsection{Bayesian online learning framework}

For $N \in \bbN$, we denote by $\bD_{1:N} = (Y_1, \ldots, Y_N)$ the collection of all samples up to the first $N$ observations. 
Let $\Pi_0$ denote the initial prior on $\Theta$. The corresponding posterior distribution is $\Pi(\cdot \mid \bD_{1:N})$, which we refer to as the $N$-th full posterior. In online learning, we assume that the data become available sequentially in the order $\bD_1, \bD_2, \ldots, \bD_T$, where $\bD_t = (Y_{N_{t-1}+1}, Y_{N_{t-1}+2}, \ldots, Y_{N_t})$ represents the $t$-th mini-batch, with $N_0 = 0$. For simplicity, we assume that each mini-batch has the same size, denoted by $n$, so that $N_t = nt$ for all $t$. 

In practice, many Bayesian online learning methods can be characterized as recursive processes that sequentially apply an update step followed by an approximation step to maintain computational tractability. This general framework with a variational approximation was analyzed in \citet{lee2026online}, which considered a sequence $(\Pi_{t}^{\OVB})_{t\le T}$ of approximated posterior distributions starting from $\Pi_{0}^{\OVB}=\Pi_{0}$. For $t\ge1$, let $\widetilde{\Pi}_{t}(\cdot \mid \bD_{t})$ be the posterior distribution obtained by updating the prior $\Pi_{t-1}^{\OVB}$ with the data $\bD_{t}$ via Bayes' rule. Specifically, the updated posterior is given by
\begin{align} \label{eq:online-Bayes}
    \widetilde{\Pi}_{t}( \cA \mid \bD_t) = \dfrac{
    \int_{\cA} \exp\left\{ - \sum_{i \in I_t} \ell_i(\theta) \right\} \rmd \Pi_{t-1}^{\OVB} (\theta)
    }{
    \int_{\Theta} \exp\left\{ - \sum_{i \in I_t} \ell_i(\theta) \right\} \rmd \Pi_{t-1}^{\OVB} (\theta)
    }
\end{align}
for any measurable $\cA \subset \Theta$, where $I_t = \{ N_{t-1}+1, \ldots, N_t \}$ is the index set for $\bD_t$.
Let $\cQ$ be a variational family of probability measures on $\Theta$. 
Since $\widetilde{\Pi}_{t}(\cdot \mid \bD_{t})$ is computationally intractable in general, we approximate the updated posterior by projecting it onto the class $\cQ$. The approximated posterior is defined as
\begin{align} \label{eq:KL-projection}
    \Pi_t^{\OVB} = \argmin_{Q \in \cQ} K \big( Q; \: \widetilde{\Pi}_{t}(\cdot \mid \bD_t) \big),
\end{align}
where
$ 
    K(P; Q) = \int \log\left( \rmd P / \rmd Q \right) \rmd P
$
denotes the Kullback--Leibler (KL) divergence. Thus, we obtain the sequence $(\Pi_t^{\OVB})_{t=1}^{T}$ of approximated posterior distributions, where $\Pi_t^{\OVB} \in \cQ$ for every $t \geq 1$.
This general procedure has been applied to many popular Bayesian models; see
\cite{broderick2013streaming, lin2013online, nguyen2018variational, choi2025online}.

It is well known that minimizing the KL divergence as in \eqref{eq:KL-projection} is equivalent to minimizing the negative evidence lower bound (ELBO); that is,
\begin{align} \label{max_ELBO}
    \Pi_t^{\OVB} = \argmin_{Q \in \cQ } \left\{ \mathbb{E}_{Q} \Big[ \sum_{i \in I_t} \ell_i(\theta) \Big] + K \big(Q ;\: \Pi_{t-1}^{\OVB}\big) \right\},
\end{align}
where $\bbE_{Q}$ denotes the expectation with respect to $Q$. Hence, with the Gaussian variational family $\cQ = \{ Q = N(\mu, \bOmega^{-1}) : \mu \in \bbR^p, \bOmega \in \symmPD \}$, the online learning procedure for obtaining $(\Pi_t^{\OVB})_{t=1}^{T}$ reduces to successive optimization problems over $\bbR^p \times \symmPD$. \citet{lee2026online} analyzed theoretical properties of $(\Pi_t^{\OVB})_{t=1}^{T}$ under the Gaussian variational family $\cQ$, establishing asymptotic guarantees when the mini-batch size $n = |\bD_t |$ is sufficiently large.

\subsection{Bayesian online one-pass algorithm} \label{ssec:alg}

In the one-pass setting ($n = | \bD_t | = 1$), solving the ELBO optimization at each iteration incurs a substantial computational cost. To address this, we replace the log-likelihood term in \eqref{max_ELBO} with its second-order local approximation. This leads to a modified version of \eqref{max_ELBO}:
\begin{align} \label{new_ELBO}
\Pi_t = \argmin_{Q \in \cQ} \left\{ \mathbb{E}_{Q} \big[ \widetilde \ell_t(\theta)\big] + K \big(Q ;\: \Pi_{t-1}\big) \right\},
\end{align}
where $\Pi_{t-1} = N(\theta_{t-1}, \bOmega_{t-1}^{-1})$ and
\begin{align} \label{def:tilde_loss}
    \widetilde \ell_t(\theta) 
    = \ell_t( \theta_{t-1} ) 
    + \langle \nabla \ell_t( \theta_{t-1} ), \theta - \theta_{t-1} \rangle  
    + \dfrac{1}{2} \langle \nabla^2 \ell_t(\theta_{t-1}), (\theta - \theta_{t-1})^{\otimes 2} \rangle.
\end{align}

Under the Gaussian variational family $\cQ$, the problem in \eqref{new_ELBO} admits a closed-form solution. 
Specifically, the quadratic structure of $\widetilde{\ell}_t(\cdot)$ yields the following updates:
\begin{align}
\begin{aligned} \label{eqn:updating_formula}
    \bOmega_t &= \bOmega_{t-1} + \nabla^2 \ell_t(\theta_{t-1}), \\
    \theta_t &= \theta_{t-1} - \bOmega_t^{-1} \nabla \ell_t  (\theta_{t-1}).
\end{aligned}    
\end{align}
Thus, the one-pass update takes a fully explicit recursive form, making it computationally appealing.
Moreover, if $\nabla^2 \ell_t(\theta_{t-1})$ is a rank-$1$ matrix of the form $u_t v_t^{\top}$ for some $u_t, v_t \in \bbR^p$, then the inverse $\bOmega_t^{-1}$ admits the recursive representation
\begin{align*}
    \bOmega_{t}^{-1} = \bOmega_{t-1}^{-1} - \dfrac{ \bOmega_{t-1}^{-1} u_t v_t^{\top} }{ 1 + v_t^{\top} \bOmega_{t-1}^{-1} u_t}  \bOmega_{t-1}^{-1},
\end{align*}
which allows us to avoid computationally expensive matrix inversions.

However, the validity of this update relies on a local quadratic approximation and may deteriorate under poor initialization. The update in \eqref{eqn:updating_formula} can be interpreted as a stochastic analogue of Newton's method, which is well-known to be sensitive to initialization. 
To illustrate this, consider the objective function $f(x) = \sqrt{x^2 + 1}$ with minimizer $x_{\star} = 0$.
The corresponding Newton update is given by
\begin{align*}
    x_{t+1} 
    = x_{t} - \dfrac{\nabla f(x_t)}{\nabla^2 f(x_t)}
    = x_{t} - \dfrac{ x_t (x_t^2 + 1)^{-1/2} }{(x_t^2 + 1)^{-3/2}} = -x_t^{3}, \quad t \in \bbN_0,
\end{align*}
which converges to $0$ if and only if $|x_0| < 1$. 
Hence, a careful initialization is crucial for the stability of Newton-type methods.

To alleviate such dependence on initializations, we adopt a warm-start strategy, which ensures that the estimator is not excessively far from $\theta_{\star}$ before the online recursion begins. To ensure such localization, we employ a batch estimator. To formalize this idea, for the initial prior $\Pi_0 = N(\theta_0, \bOmega_0^{-1})$, we introduce the following batch maximum a posteriori (MAP) estimator:
\begin{align} \label{def:batch_map}
    \hat \theta_t^{\MAP} = \argmin_{\theta \in \Theta} \left\{ \sum_{s=1}^{t} \ell_s(\theta) + \dfrac{1}{2} \big\| \bOmega_0^{1/2} (\theta - \theta_0) \big\|_2^{2} \right\}, \quad  t \in \bbN.
\end{align}
We now describe the proposed Bayesian online one-pass algorithm.

\ben[label=(\textbf{Alg}), ref = \textbf{Alg}]
\item Let $t_{0} \in \bbN$.
If $t \in \bbN$ with $t \leq t_{0}$, we use batch updates:
\begin{align*}
    \bOmega_t = \bOmega_0 + \sum_{s=1}^{t} \nabla^2 \ell_s( \hat \theta_{t}^{\MAP} ), \quad
    \theta_t = \hat \theta_{t}^{\MAP}.
\end{align*}
If $t \in \bbN$ with $t > t_{0}$, we use online updates
\begin{align*}
    \bOmega_t = \bOmega_{t-1} + \bH_t, \quad 
    \theta_t = \theta_{t-1} - \bOmega_t^{-1} g_t,
\end{align*}
where $g_t = \nabla \ell_t(\theta_{t-1}) \in \bbR^p$ and $\bH_t = \nabla^2 \ell_t(\theta_{t-1}) \in \symmPSD$.
\vspace{-\belowdisplayskip}
\label{Alg}
\een
The above scheme consists of a warm-start phase followed by an online update phase.
In Algorithm \eqref{Alg}, the phase transition time $t_{0}$ determines the length of the initial warm-start phase.
For $t \leq t_{0}$, we compute the batch estimator $\hat \theta_t^{\MAP}$ based on the full sample $\bD_{1:t}$ and set $\theta_t = \hat \theta_t^{\MAP}$. For $t > t_{0}$, we switch to a one-pass update. This yields the sequence $(\Pi_t)_{t \in \bbN}$ with $\Pi_t = N(\theta_t, \bOmega_t^{-1})$. 

The choice of $t_{0}$ plays an important role in both theoretical and empirical performance. In Section \ref{sec:theory}, we provide a sufficient condition on $t_0$ under which stable online updates are possible; see \eqref{A5}. Moreover, empirical results in Section \ref{sec:experiments} illustrate the necessity of the warm-start phase, showing that, in certain settings, omitting it leads to substantially slower convergence. 

This does not imply, however, that stable performance always requires a long warm-start phase. For example, in logistic regression settings, the proposed procedure continues to perform well even with very small values of $t_0$ (\eg, $t_0=0$). For practical use, Section \ref{sec:experiments} provides explicit guidelines for choosing $t_0$ that are not overly conservative, and the corresponding empirical results demonstrate stable behavior across all considered settings. Moreover, since larger values of $t_0$ generally lead to more stable behavior, tuning $t_0$ is relatively straightforward in practice.

While the online updates in Algorithm \eqref{Alg} resemble those of existing algorithms \citep[\eg,][]{bercu2020efficient, godichon2025adaptive}, the ordering of the updates is slightly different. Specifically,  although the precise forms vary across methods, existing approaches typically adopt the following update order:
\begin{align*}
    \theta_t = \theta_{t-1} - \bOmega_{t-1}^{-1} g_t, \quad 
    \bOmega_t = \bOmega_{t-1} + \bH_t.
\end{align*}
The key difference lies in the update of $\theta_t$: whether one uses $\bOmega_{t-1}$ or $\bOmega_t$.
This subtle discrepancy results in a significant difference in the theoretical analysis.
Since $\bOmega_{t-1}$ is $\cF_{t-1}$-measurable, existing approaches exploit the identity $\bbE [ \bOmega_{t-1}^{-1} g_t \mid \cF_{t-1} ] = \bOmega_{t-1}^{-1} \bbE [ g_t \mid \cF_{t-1} ]$, which facilitates the analysis. 
In contrast, this property no longer holds when $\bOmega_t$ is used.
For this reason, \citet{bercu2020efficient} modified the ordering of the updates to facilitate the theoretical analysis, while initially considering an update scheme similar to ours; see (3.1)–(3.6) in \citet{bercu2020efficient}. This motivates the development of a new theoretical framework tailored to our algorithm, which we present in detail in Section \ref{sec:rate_conv}.

\section{Theoretical results} \label{sec:theory}

In this section, we establish a theoretical justification for $\Pi_t$ proposed in Section~\ref{ssec:alg}. 
Our goal is to show that $\Pi_t$ becomes asymptotically equivalent to the full posterior $\Pi(\cdot \mid \bD_{1:t})$ in an appropriate sense. We refer to this result as the \textit{online BvM theorem}.

\subsection{Rates of convergence and online Bernstein--von Mises theorem} \label{sec:rate_online_BvM}

\subsubsection{Assumptions} \label{sec:assumptions}
We present the assumptions required for our theoretical analysis. 
To accommodate a broad class of statistical models, the conditions are stated in a high-level form. 
Detailed insights and motivations for each assumption will be provided in Appendix~\ref{sec:proof_strategy}; for clarity of presentation, we focus here on their formal statements and overall structure.
More specifically, the assumptions are organized into four groups: basic setup \eqref{A0}, local regularity \eqref{A1}--\eqref{A3}, fluctuation control \eqref{A4}, and the warm-start phase \eqref{A5}.
Before stating the assumptions, we introduce some notation.
Throughout the paper, we consider fixed $r > 0$ and $\rmx > 0$. We define the local neighborhood of $\theta_{\star}$ by
\begin{align} \label{def:local_set}
    \Theta_{r} = \Theta_{r}(\theta_{\star}) = \{ \theta \in \Theta : \| \theta - \theta_{\star} \|_2 \leq r \}.
\end{align}
The radius $r$ is chosen such that the regularity conditions, to be specified later, hold on $\Theta_r$. In particular, $r$ need not vanish; any fixed choice of $r$ is admissible provided these conditions are satisfied. The parameter $\rmx$ controls the level of high-probability events. All statements involving $\Theta_r$ and $\rmx$ are understood with respect to this choice of $r$ and $\rmx$.
With this notation in place, we now state the assumptions.

\paragraph*{Basic setup}
This condition imposes basic regularity, identifiability, and conditions on the initial prior parameters.
\ben[label=(\textbf{A0}), ref = \textbf{A0}] 
\item For each $t \in \bbN$, the map $\theta \mapsto \ell_t(\theta)$ is at least twice differentiable. Moreover, $\theta_{\star}$ is the unique minimizer of the map $\theta \mapsto \bbE \ell_t(\theta)$ for any $t \in \bbN$. The initial prior parameters $(\theta_0, \bOmega_0)$ are non-random quantities satisfying
\begin{align} \label{def:prior_precision}
    0 < \underline \lambda_0 \leq \lambda_{\min}(\bOmega_0) \leq \lambda_{\max}(\bOmega_0) \leq \overline \lambda_0 \leq K, 
\end{align} 
where $K > 0$ is a universal constant.
\label{A0}
\een

\paragraph*{Local regularity}
These conditions impose local regularity on $\Theta_r$.
\ben[label=(\textbf{A1}), ref=\textbf{A1}]
\item There exist constants $K_1, K_2 > 0$ such that, on an event $\scrE_1(\rmx)$ with $\bbP_{\theta_{\star}}(\scrE_1(\rmx)) \geq 1 - e^{-\rmx}$, for all $t \in \bbN$ with $t \geq K_1 (p + \rmx)$
\begin{align*}
    \inf_{ \{ \overline \theta_s \}_{s=1}^{t} \subset \Theta_{r} }
    \lambda_{\min} \bigg( \sum_{s=1}^{t} \nabla^2 \ell_s (\overline \theta_s) \bigg) 
    \geq 
    K_2 t.
\end{align*}
\label{A1}
\een    
\ben[label=(\textbf{A2}), ref=\textbf{A2}, topsep = 1pt]
\item For $t \in \bbN$ and $\theta, \theta' \in \Theta_r$, there exist $\phi_{t}(\theta, \theta') \geq 0$ and constant $L_1 \geq 0$ such that
\begin{align*}
    \nabla^2 \ell_t ( \theta )
    \preceq
    \big( 1 + \phi_{t}(\theta, \theta') \big) \nabla^2 \ell_t ( \tilde \theta ), \quad 
    \phi_{t}(\theta, \theta') \leq L_1 \| \theta - \theta' \|_2
\end{align*}
for some $\tilde \theta$ on the line segment between $\theta$ and $\theta'$ satisfying
\begin{align*}
    \ell_t(\theta)
    =
    \ell_t(\theta')
    + \langle \nabla \ell_t(\theta'), \theta- \theta' \rangle
    + \dfrac{1}{2} \langle \nabla^2 \ell_t ( \tilde \theta ), (\theta- \theta')^{\otimes 2} \rangle.
\end{align*}
Also, there exists a constant $L_2 > 0$ such that
$
    \max_{t \in \bbN} \sup_{\theta \in \Theta_r} \| \nabla^2 \ell_t ( \theta ) \|_2 \leq L_2.
$
\label{A2}
\een

\ben[label=(\textbf{A3}), ref=\textbf{A3}]
\item
There exists $L_3 > 0$ such that for any $\theta, \theta' \in \Theta_r$
\begin{align} \label{A3-a}
    \max_{t \in \bbN} \| \nabla^2 \ell_t ( \theta ) - \nabla^2 \ell_t ( \theta' ) \|_2 
    \leq L_3 \| \theta - \theta' \|_2.
\end{align}
Furthermore, there exists a constant $K_6 > 0$ such that for any $\theta, \theta + h \in \Theta_r$
\begin{align} \label{A3-b}
    (1 + e^{K_6 \| h \|_2})^{-1} \bF_{t, \theta}
    \preceq \bF_{t, \theta+h}
    \preceq (1 + e^{ K_6 \| h \|_2}) \bF_{t, \theta}, \quad \forall t \in \bbN,
\end{align}
where $\bF_{t, \theta} \overset{\rm def}{=} \sum_{s=1}^{t} \nabla^2 \ell_s(\theta)$.
\label{A3}
\een

\paragraph*{Fluctuation control}
This condition controls stochastic fluctuations along the online updates.
To localize the analysis, we introduce the following exit time from $\Theta_r$:
\begin{align} \label{def:exit_time}
t_{\exit} = \inf \{ t \in \bbN : t > t_0, \ \theta_t \notin \Theta_r \}.
\end{align}

\ben[label=(\textbf{A4}), ref=\textbf{A4}]
\item Fix a phase transition time $t_0 \in \bbN$ satisfying $t_0 \geq K_1(p + \rmx)$, where $K_1$ is specified in \eqref{A1}.
There exists a constant $K_3 > 0$ such that, on an event $\scrE_2(\rmx)$ satisfying $\bbP_{\theta_{\star}}(\scrE_2(\rmx)) \ge 1 - e^{-\rmx}$, for all $t \in \bbN$ with $t > t_{0}$
\begin{align} \label{A4-a}
    \sum_{s=t_{0}+1}^{t}
    \big\{
        \ell_s(\theta_{\star})
        -
        \ell_s( \theta_{s-1})
    \big\}
    \le
    K_3 \rmx.
\end{align}
Furthermore, there exist constants $K_4 > 0$ and $K_5 \geq 0$ such that, on an event $\scrE_3(\rmx)$ with $\bbP_{\theta_{\star}}(\scrE_3(\rmx)) \geq 1 - e^{-\rmx}$, for any $t \in \bbN$ with $t_{0} < t \leq t_{\exit}$ 
\begin{align} \label{A4-b}
    \sum_{s = t_{0}+1}^{t} \langle g_s, \bOmega_{s}^{-1} g_s \rangle
    \leq 
    K_4 p \log t + K_5 \big( \rmx + \log t \big).
\end{align}
The constants $K_3, K_4$, and $K_5$ are independent of $t_0$.
\label{A4}
\een

\paragraph*{Warm-start phase}
This condition ensures stability of the online updates through a sufficient requirement on $t_0$. 
To formulate an assumption on $t_0$, we introduce some notation.
For $1 \leq s \leq t \leq 6$, let $K_{s:t} = (K_j)_{j=s}^{t}$ and $L_{s:t} = (L_j)_{j=s}^{t}$, where these constants are specified in \eqref{A1}-\eqref{A4}.
For $t \in \bbN$, define the maximum likelihood estimator (MLE) $\hat \theta_t^{\ML}$ by
\begin{align} \label{def:MLE}
    \hat \theta_t^{\ML} = \argmin_{\theta \in \Theta} \sum_{s=1}^{t} \ell_s(\theta).
\end{align}

\ben[label=(\textbf{A5}), ref=\textbf{A5}]
\item There exists a sufficiently large constant $M = M( K_{1:5}, L_{1:2}, r) > 0$ such that, on an event $\scrE_4(\rmx)$ with $\bbP_{\theta_{\star}}(\scrE_4(\rmx)) \geq 1 - e^{-\rmx}$, if
$    t_0 \geq \lceil M (p\log (p \vee 3) + \rmx) \rceil, $
then $\big\| \bOmega_{t_0}^{1/2}( \hat \theta_{t_0}^{\MAP} - \theta_{\star}) \big\|_2 \leq C (p + \rmx)$ and
\begin{align*} 
    \big\| \hat \theta_t^{\MAP} - \theta_{\star} \big\|_2 \vee \big\| \hat \theta_t^{\ML} - \theta_{\star} \big\|_2
    \leq C \sqrt{\dfrac{p + \log t + \rmx}{t}}, \quad 
    \forall t \in \bbN \text{ with } t \geq t_0,
\end{align*}
where $C > 0$ is a universal constant. 
\label{A5}
\een

By \eqref{A5}, on $\scrE_{4}(\rmx)$, it follows that $\hat \theta_{t_0}^{\MAP} = \theta_{t_0} \in \Theta_{r}$ because $M = M( K_{1:5}, L_{1:2}, r)$ is large enough.

\subsubsection{Main results} \label{sec:main_results}
We now present the main results: the $\ell_2$-convergence rate and the online BvM theorem. 
The estimation rate of $\theta_t$ serves as a key ingredient in establishing the online BvM theorem. 
We therefore begin by deriving a non-asymptotic $\ell_2$-convergence rate of $\theta_t$ to $\theta_{\star}$ for $t \geq t_0$. To this end, for $t \in \bbN_0$, we define
\begin{align} \label{def:V_t_Delta_t}
    V_t = \big\| \bOmega_t^{1/2} ( \theta_t - \theta_{\star} ) \big\|_2^2, \quad 
    \Delta_t = \theta_t - \theta_{\star}.
\end{align}
The quantity $V_t$ measures the deviation $\Delta_t$ under the local geometry induced by $\bOmega_t$.
Since Algorithm \eqref{Alg} involves the coupled evolution of $(\theta_t, \bOmega_t)$, analyzing $V_t$ rather than $\| \Delta_t \|_2$ directly is natural in this context. 

For $\rmx > 0$, let $\scrE_{1:4}(\rmx) = \scrE_1(\rmx) \cap \scrE_2(\rmx) \cap \scrE_3(\rmx) \cap \scrE_4(\rmx)$, which collects the high-probability events specified in Section \ref{sec:assumptions}. In Theorem \ref{thm:rate_conv}, we show on $\scrE_{1:4}(\rmx)$ that the growth of $V_t$ remains sufficiently controlled, which in turn yields the optimal $\ell_2$-rate up to logarithmic factors.

\begin{theorem}[Rates of convergence] \label{thm:rate_conv}
    For given $\rmx, r > 0$, suppose that \eqref{A0}–\eqref{A2} and \eqref{A4}-\eqref{A5} hold.
    Then, on $\scrE_{1:4}(\rmx)$, we have for any $t \in \bbN$ with $t \geq t_0$
    \begin{align*} 
        V_t \leq K \left( p \log t + \rmx  \right), \quad 
        \| \theta_t - \theta_{\star} \|_2 \leq K \sqrt{\dfrac{p \log t + \rmx}{t}}, \quad 
        \theta_t \in \Theta_r,
    \end{align*}
    where $K = K(K_{2:5}) > 0$.
\end{theorem}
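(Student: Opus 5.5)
The plan is to derive an exact one-step recursion for $V_t$, telescope it from $t_0$, and close the argument by induction combined with the exit time $t_{\exit}$. Write $\Delta_{t-1}=\theta_{t-1}-\theta_\star$. For $t>t_0$ the update gives $\bOmega_t\Delta_t=\bOmega_t\Delta_{t-1}-g_t$. Using $\bOmega_t=\bOmega_{t-1}+\bH_t$, this yields
\begin{align*}
V_t = V_{t-1} + \langle \bH_t,\Delta_{t-1}^{\otimes 2}\rangle - 2\langle g_t,\Delta_{t-1}\rangle + \langle g_t,\bOmega_t^{-1}g_t\rangle .
\end{align*}
To handle the cross term, I apply \eqref{A2} with $\theta=\theta_{t-1}$ and $\theta'=\theta_\star$, more precisely the Taylor identity expanded around $\theta_{t-1}$. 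This gives $-\langle g_t,\Delta_{t-1}\rangle=\ell_t(\theta_\star)-\ell_t(\theta_{t-1})-\tfrac12\langle\nabla^2\ell_t(\tilde\theta_t),\Delta_{t-1}^{\otimes2}\rangle$. Hence
\begin{align*}
V_t = V_{t-1} + 2\{\ell_t(\theta_\star)-\ell_t(\theta_{t-1})\} + \langle \bH_t-\nabla^2\ell_t(\tilde\theta_t),\Delta_{t-1}^{\otimes 2}\rangle + \langle g_t,\bOmega_t^{-1}g_t\rangle .
\end{align*}
The comparison $\bH_t\preceq(1+\phi_t)\nabla^2\ell_t(\tilde\theta_t)$ in \eqref{A2}, together with the bound $L_2$ on $\Theta_r$, controls the middle term by $L_1L_2\|\Delta_{t-1}\|_2^3$, as long as $\theta_{t-1}\in\Theta_r$.

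Next I telescope from $t_0+1$ to $t\le t_{\exit}$. Conditions \eqref{A4-a} and \eqref{A4-b} bound the likelihood increments and the gradient terms, so on $\scrE_{1:4}(\rmx)$
\begin{align*}
V_t \le V_{t_0} + 2K_3\rmx + K_4p\log t + K_5(\rmx+\log t) + L_1L_2\sum_{s=t_0}^{t-1}\|\Delta_s\|_2^3 .
\end{align*}
\eqref{A5} bounds $V_{t_0}$ by a multiple of $p+\rmx$; I will use it in the form $V_{t_0}\lesssim p+\rmx$. To turn $V_s$ into Euclidean control I need $\lambda_{\min}(\bOmega_s)\ge K_2 s$ for $t_0\le s<t_{\exit}$. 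This comes from \eqref{A1}, since $\bOmega_s\succeq\sum_{u\le t_0}\nabla^2\ell_u(\hat\theta^{\MAP}_{t_0})+\sum_{u=t_0+1}^{s}\nabla^2\ell_u(\theta_{u-1})$ with all evaluation points in $\Theta_r$. Here I use $\hat\theta^{\MAP}_{t_0}\in\Theta_r$ by \eqref{A5}, $\theta_{u-1}\in\Theta_r$ for $u\le t_{\exit}$, and $t_0\ge K_1(p+\rmx)$. Consequently $\|\Delta_s\|_2^2\le V_s/(K_2 s)$.

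The main obstacle is the cubic remainder $\sum_s\|\Delta_s\|_2^3$, which feeds $V$ back into its own bound. I would close it by strong induction on $t\in[t_0,t_{\exit}]$ with hypothesis $V_s\le B(p\log s+\rmx)$ for a constant $B$ to be chosen. Under this hypothesis the remainder is at most $(B/K_2)^{3/2}\sum_{s\ge t_0}s^{-3/2}(p\log s+\rmx)^{3/2}$. This is of order $B^{3/2}(p\log t_0+\rmx)\sqrt{(p\log t_0+\rmx)/t_0}$, because $s^{-3/2}(\log s)^{3/2}$ is summable with tail of order $t_0^{-1/2}(\log t_0)^{3/2}$. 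By \eqref{A5}, taking $M$ large makes $\sqrt{(p\log t_0+\rmx)/t_0}$ as small as desired. The remainder is therefore at most $\tfrac12 B(p\log t+\rmx)$, while the other terms are at most $C'(p\log t+\rmx)$ with $C'=C'(K_{3:5})$. Choosing $B=2C'$ closes the induction. The delicate point is verifying that $M$ can be taken to depend only on the listed constants; the $(p\log s)^{3/2}$ factor against $t_0\gtrsim p\log p$ needs care.

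Finally, the localization. For any $t_0<t\le t_{\exit}$, the bound gives $\|\Delta_t\|_2\le\sqrt{B(p\log t+\rmx)/(K_2t)}$. This is $<r$ for $t\ge t_0$ once $M$ is large, since $(\log t)/t$ is decreasing. The bound is valid at $t=t_{\exit}$ itself, because $\bOmega_{t_{\exit}}$ only involves Hessians at points $\theta_{u-1}\in\Theta_r$ and \eqref{A4-b} holds up to $t_{\exit}$. If $t_{\exit}<\infty$, then $\theta_{t_{\exit}}\in\Theta_r$, contradicting the definition of $t_{\exit}$, so $t_{\exit}=\infty$. This yields all three claims for every $t\ge t_0$, with $K=K(K_{2:5})$.
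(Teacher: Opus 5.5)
Your proof is correct. Up to the last step it is the paper's argument. You use the exact recursion of Lemma~\ref{lemma:recursion}, and the Taylor identity together with the comparison in \eqref{A2} to turn the cross term into $2\{\ell_t(\theta_\star)-\ell_t(\theta_{t-1})\}$ plus a cubic remainder. Your $L_1L_2\|\Delta_{t-1}\|_2^3\le L_1L_2K_2^{-3/2}(t-1)^{-3/2}V_{t-1}^{3/2}$ is exactly the paper's $\epsilon_{t-1}V_{t-1}^{3/2}$. The bound $\lambda_{\min}(\bOmega_s)\ge K_2 s$ from \eqref{A1}, valid up to and including $t_{\exit}$, and the exit-time contradiction are also the same.

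The difference is how you close the bootstrap. The paper's Lemma~\ref{lemma:recursion_processed} keeps only the tail-sum control $\sum_{r=s}^{t}\epsilon_r\lesssim s^{-1/2}$ and partial-sum control of $u_r$. It then invokes an abstract device, Lemma~\ref{lemma:induction_lemma} with Proposition~\ref{prop:induction}. That device uses dyadic blocks $[2^kt_0,2^{k+1}t_0]$ and a blockwise bound $V_t\le C_kS_k$, with inflation factors $C_k=1+2^{-k/4}/7$ whose product stays below $2.5$.

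You instead run a single strong induction with the fixed envelope $B(p\log s+\rmx)$. You use the explicit $s^{-3/2}$ decay to sum the cubic remainder over the whole tail at once. This gives a bound of order $B^{3/2}t_0^{-1/2}(p\log t_0+\rmx)^{3/2}$ that does not depend on $t$.

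The step you flag as delicate does go through. Since $t\mapsto(\log t)/t$ is decreasing, $t_0\ge M(p\log(p\vee 3)+\rmx)$ gives $(p\log t_0+\rmx)/t_0\lesssim(1+\log M)/M$ uniformly in $p$. Hence $M$ depending only on $K_{2:5}$, $L_{1:2}$ and $r$ suffices, with $r$ entering through the localization step.

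Your route is shorter and makes the constants transparent. The paper's route gives a reusable, model-free lemma whose hypotheses involve only block sums of the coefficients and running maxima of the signed increments $b_r$.

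You also take two small liberties. You apply the comparison in \eqref{A2} at the base point $\theta_{t-1}$ of the Taylor expansion rather than at the expanded point. You read \eqref{A5} as a bound on $V_{t_0}$ rather than on $V_{t_0}^{1/2}$. The paper's proof takes both in exactly the same way.
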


We now turn to the central result of this paper: the \textit{online BvM theorem}. Our goal is to establish that
\begin{align*}
    d_V \Big( \Pi_t, N \big( \hat \theta_t^{\ML}, \bF_{t, \theta_{\star}}^{-1} \big) \Big) \vee
    d_V \Big( \Pi_t, \Pi(\cdot \mid \bD_{1:t}) \Big) = o_{P}(1),
\end{align*}
where $d_V(\cdot, \cdot)$ denotes the total variation distance
\begin{align} \label{def:TV_distance}
    d_{V} \left( P,  Q \right) = \sup_{\cA} \left| P(\cA) - Q(\cA) \right|,
\end{align}
with the supremum taken over all measurable sets $\cA$, and $P$ and $Q$ denoting probability distributions.
Let $Q_{\BvM, t} = N \big( \hat \theta_t^{\ML}, \bF_{t, \theta_{\star}}^{-1} \big)$. By the classical BvM theorem for batch data, it is well-known that
\begin{align*}
    d_V \big( Q_{\BvM, t}, \Pi(\cdot \mid \bD_{1:t}) \big) = o_{P}(1).
\end{align*}
For detailed statements, we refer to the classical asymptotic theory \citep[Section 10.2]{van2000asymptotic} and its non-asymptotic counterpart \citep[Section 7]{lee2026online}. Since 
\begin{align*}
    d_V \big( \Pi_t, \Pi(\cdot \mid \bD_{1:t}) \big)
    &\leq 
    d_V \big( \Pi_t, Q_{\BvM, t} \big)
    +
    d_V \big( Q_{\BvM, t}, \Pi(\cdot \mid \bD_{1:t}) \big) \\
    &=
    d_V \big( \Pi_t, Q_{\BvM, t} \big)
    +
    o_{P}(1),
\end{align*}
it suffices to bound $d_V ( \Pi_t, Q_{\BvM, t} )$, which constitutes our main contribution. 
The following theorem establishes this bound.

\begin{theorem}[Online BvM] \label{thm:online_BvM}
    For given $\rmx, r > 0$, suppose that \eqref{A0}–\eqref{A5} hold.
    Then, there exists a constant $M_2 = M_2(K_6, L_3, \overline \lambda_0) > 0$ such that, on $\scrE_{1:4}(\rmx)$, for any $t \in \bbN$ with $t > M_2 t_0$,
    \begin{align*}
        d_{V} \big( Q_{\BvM, t}, \Pi_t \big)
        &\leq 
        K \epsilon_{\app, t}, \\
        d_{V} \big( \Pi_t, \Pi(\cdot \mid \bD_{1:t}) \big)
        &\leq 
        K \epsilon_{\app, t}
        +
        d_{V} \big( Q_{\BvM, t}, \Pi(\cdot \mid \bD_{1:t}) \big),
    \end{align*}   
    where $K = K(K_{1:5}, L_{1:3}, \overline \lambda_0, r) > 0$ and 
    \begin{align} \label{def:epsilon_app}
        \epsilon_{\app, t} = \epsilon_{\app, t}(t_0, p, \rmx) = \sqrt{ \dfrac{\log^2(t/t_0)(p \log t + \rmx)^2}{t} }.
    \end{align}    
\end{theorem}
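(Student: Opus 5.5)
The plan is to compare the two Gaussians $\Pi_t = N(\theta_t, \bOmega_t^{-1})$ and $Q_{\BvM,t} = N(\hat\theta_t^{\ML}, \bF_{t,\theta_\star}^{-1})$ through the Pinsker inequality and the explicit KL divergence between Gaussians. This reduces the total variation bound to two quantities:
\begin{align*}
d_V\big(Q_{\BvM,t},\Pi_t\big) \lesssim \big\| \bF_{t,\theta_\star}^{1/2}(\theta_t - \hat\theta_t^{\ML})\big\|_2 + \big\| \bF_{t,\theta_\star}^{-1/2}\bOmega_t \bF_{t,\theta_\star}^{-1/2} - \bI_p\big\|_{\rm F}.
\end{align*}
The first is a mean discrepancy and the second a precision discrepancy. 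The second statement of the theorem then follows from the triangle inequality already displayed before the theorem, so all the work goes into the first bound. Throughout I work on $\scrE_{1:4}(\rmx)$ and use Theorem \ref{thm:rate_conv}: for every $s \ge t_0$ we have $\theta_s \in \Theta_r$ and $\|\theta_s - \theta_\star\|_2 \le K\sqrt{(p\log s+\rmx)/s}$, so $t_{\exit}=\infty$ and every local assumption can be applied along the whole trajectory.

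\textbf{Precision term.} Write
\begin{align*}
\bOmega_t = \bOmega_0 + \sum_{s\le t_0}\nabla^2\ell_s(\hat\theta_{t_0}^{\MAP}) + \sum_{s=t_0+1}^t \nabla^2\ell_s(\theta_{s-1}).
\end{align*}
Subtract $\bF_{t,\theta_\star}$ and apply \eqref{A3-a} to each summand. The warm-start block contributes $\overline\lambda_0 + L_3 t_0\|\hat\theta^{\MAP}_{t_0}-\theta_\star\|_2 \lesssim \sqrt{t_0(p+\rmx)}$ by \eqref{A5}. The online block contributes
\begin{align*}
L_3\sum_{s=t_0+1}^t \sqrt{(p\log s+\rmx)/s} \lesssim \sqrt{t(p\log t+\rmx)}.
\end{align*}
Normalizing by $\lambda_{\min}(\bF_{t,\theta_\star}) \ge K_2 t$ from \eqref{A1}, and paying a $\sqrt p$ factor to pass to the Frobenius norm, gives a bound of order $\sqrt{p(p\log t+\rmx)/t}$. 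This is dominated by $\epsilon_{\app,t}$ once $\log(t/t_0)\gtrsim 1$, which the condition $t>M_2t_0$ ensures.

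\textbf{Mean term, the main obstacle.} Here the online recursion has to be linearized. Set
\begin{align*}
\psi_t = \bOmega_t(\theta_t - \theta_\star) + \sum_{s\le t}\nabla\ell_s(\theta_\star).
\end{align*}
Since $\hat\theta^{\ML}_t - \theta_\star \approx -\bF_{t,\theta_\star}^{-1}\sum_{s\le t}\nabla\ell_s(\theta_\star)$ up to a quadratic remainder of order $(p+\log t+\rmx)/\sqrt t$ in the $\bF^{1/2}$-norm (Taylor expansion with \eqref{A3-a} and \eqref{A5}), it suffices to show $\|\bF_{t,\theta_\star}^{-1/2}\psi_t\|_2 \lesssim \sqrt t\,\epsilon_{\app,t}$, together with the precision bound above to swap $\bOmega_t$ for $\bF_{t,\theta_\star}$.

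From \eqref{Alg}, $\bOmega_t\theta_t = \bOmega_t\theta_{t-1} - g_t$, which gives
\begin{align*}
\psi_t - \psi_{t-1} = \bH_t\Delta_{t-1} - g_t + \nabla\ell_t(\theta_\star) = -\big[\nabla\ell_t(\theta_{t-1}) - \nabla\ell_t(\theta_\star) - \nabla^2\ell_t(\theta_{t-1})\Delta_{t-1}\big].
\end{align*}
The bracket is a second-order Taylor remainder, bounded by $L_3\|\Delta_{t-1}\|_2^2$. This step deliberately avoids the conditional-expectation identity that the update ordering breaks. Summing from $t_0+1$ to $t$ with Theorem \ref{thm:rate_conv} gives
\begin{align*}
\|\psi_t - \psi_{t_0}\|_2 \lesssim \sum_{s=t_0}^{t} \frac{p\log s+\rmx}{s} \lesssim \log(t/t_0)(p\log t+\rmx).
\end{align*}
This is exactly the source of the factor $\log(t/t_0)(p\log t+\rmx)$. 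The initial term $\psi_{t_0}$ is handled by the first-order optimality condition of $\hat\theta^{\MAP}_{t_0}$ together with a Taylor expansion of the empirical gradient. This gives $\|\psi_{t_0}\|_2 \lesssim \overline\lambda_0 + (p+\rmx)$, which is of lower order after using $t>M_2t_0$. The constant $M_2$, which depends on $K_6$, $L_3$ and $\overline\lambda_0$, is chosen so that these initial contributions and the matrix comparisons via \eqref{A3-b} are absorbed. Dividing by $\sqrt{K_2t}$ yields the bound $K\epsilon_{\app,t}$.

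The delicate point I expect is keeping the Taylor remainders summable. If a straight $\ell_2$ bound on $\|\Delta_{s-1}\|_2^2$ is too crude, I would instead use \eqref{A2} and \eqref{A4-b} to control them in the $\bOmega_s$-geometry.
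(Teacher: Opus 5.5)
Your overall architecture (Pinsker with the Gaussian KL formula, the precision estimate via \eqref{A3-a} and \eqref{A1}, and the triangle inequality for the second claim) matches the paper. Your treatment of the location term, however, is genuinely different.

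\textbf{Comparison with the paper's route.} The paper evaluates the identity of Lemma \ref{lemma:loss_recursion} at $\theta_t$ and uses $\nabla\widetilde L_t(\theta_t)=0=\nabla L_{1:t}(\hat\theta_t^{\ML})$. It lower-bounds the averaged Hessian of $L_{1:t}$ between $\theta_t$ and $\hat\theta_t^{\ML}$ by $\tfrac13\bF_{t,\hat\theta_t^{\ML}}$ via \eqref{A3-b}, bounds the remainders by $L_3\|\theta_t-\theta_{s-1}\|_2^2$, and uses \eqref{A3-b} again to pass to $\bF_{t,\theta_\star}$.

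Your $\psi_t$ is that same identity evaluated at $\theta_\star$: since $\nabla\widetilde L_t(\theta_\star)=-\bOmega_t\Delta_t$, one has $\psi_t=\nabla L_{1:t}(\theta_\star)-\nabla\widetilde L_t(\theta_\star)$. This removes the need for \eqref{A3-b} in the location term. The price is the Fisher expansion of $\hat\theta_t^{\ML}$ plus a precision swap, and the swap is harmless. Write $0=\nabla L_{1:t}(\theta_\star)+\bF_{t,\theta_\star}(\hat\theta_t^{\ML}-\theta_\star)+R_t$. Then $\bOmega_t(\theta_t-\hat\theta_t^{\ML})=\psi_t+R_t-(\bOmega_t-\bF_{t,\theta_\star})(\hat\theta_t^{\ML}-\theta_\star)$. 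By your precision estimate and \eqref{A5}, the last term is at most $\|\bOmega_t-\bF_{t,\theta_\star}\|_2\|\hat\theta_t^{\ML}-\theta_\star\|_2\lesssim\sqrt{(p\log t+\rmx)(p+\log t+\rmx)}$. The factor $\log(t/t_0)(p\log t+\rmx)$ arises the same way in both proofs.

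There is also a normalization slip: what you need is $\|\psi_t\|_2\lesssim\sqrt t\,\epsilon_{\app,t}$, equivalently $\|\bF_{t,\theta_\star}^{-1/2}\psi_t\|_2\lesssim\epsilon_{\app,t}$.

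\textbf{The gap: the bound on $\psi_{t_0}$.} Using the optimality condition of $\hat\theta^{\MAP}_{t_0}$ one finds exactly
$\psi_{t_0}=\bOmega_0(\theta_0-\theta_\star)+(\bF_{t_0,\theta_{t_0}}-\overline{\bF}_{t_0})\Delta_{t_0}$, where $\overline{\bF}_{t_0}=\int_0^1\nabla^2L_{1:t_0}(\theta_\star+u\Delta_{t_0})\,\rmd u$.

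The second piece is at most $L_3t_0\|\Delta_{t_0}\|_2^2\lesssim p+\log t_0+\rmx$. The first piece is a prior-bias term proportional to $\|\theta_0-\theta_\star\|_2$, and nothing in \eqref{A0}--\eqref{A5} makes it $O(1)$. Your bound $\overline{\lambda}_0+(p+\rmx)$ tacitly assumes this.

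What the assumptions do give comes from $\bOmega_0(\theta_0-\hat\theta^{\MAP}_{t_0})=\sum_{s\le t_0}\{\nabla\ell_s(\hat\theta^{\MAP}_{t_0})-\nabla\ell_s(\hat\theta^{\ML}_{t_0})\}$ together with \eqref{A2} and \eqref{A5}. This yields $\|\bOmega_0(\theta_0-\theta_\star)\|_2\lesssim\sqrt{t_0(p+\log t_0+\rmx)}+\overline{\lambda}_0\, r$. After dividing by $\sqrt{K_2t}$ it contributes $\sqrt{t_0(p\log t+\rmx)/t}$, the same order as the paper's warm-start terms (iii)--(iv).

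Absorbing this into $\epsilon_{\app,t}$ requires $t_0\lesssim\log^2(t/t_0)(p\log t+\rmx)$, which $t>M_2t_0$ alone does not give. The paper secures it only implicitly, by taking $t_0$ of order $M(p\log(p\vee 3)+\rmx)$; the final constant in the proof of Theorem \ref{thm:online_cumulative_error} depends on $M$. You must do the same, and with this correction your argument is complete.

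Under an additional bounded-prior-bias condition such as $\|\bOmega_0^{1/2}(\theta_0-\theta_\star)\|_2^2\lesssim p$, your route gives a warm-start contribution of order $(p+\log t_0+\rmx)/\sqrt t$. This lacks the $\sqrt{t_0}$ factor and is sharper than what the paper's (iii)--(iv) give.
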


Theorem \ref{thm:online_BvM} ensures that $\Pi_t$ provides valid uncertainty quantification with explicit finite-sample error rates. For a fixed $\alpha \in (0, 1)$, consider the following Wald-type confidence and credible sets:
\begin{align*}
    \widehat{C}_{1:t}(\alpha) &= \left\{ \theta \in \Theta : \big\| \bF_{t, \theta_{\star}}^{1/2} ( \theta - \hat \theta_t^{\ML} ) \big\|_2^2 \leq \chi_{p, \alpha}^2 \right\},
    \\
    \widehat{C}_{t}(\alpha) &= \left\{ \theta \in \Theta : \big\| \bOmega_{t}^{1/2} ( \theta - \theta_t ) \big\|_2^2 \leq \chi_{p, \alpha}^2 \right\},
\end{align*}
where $\chi_{p, \alpha}^2$ denotes the $(1-\alpha)$ quantile of the $\chi_p^2$ distribution. Here, $\widehat{C}_{1:t}(\alpha)$ represents the standard frequentist confidence set based on the batch MLE, while $\widehat{C}_{t}(\alpha)$ is the credible set based on the sequentially updated posterior $\Pi_t$.
As a consequence of Theorem \ref{thm:online_BvM}, the following inclusions hold on $\scrE_{1:4}(\rmx)$
\begin{align*}
  \widehat{C}_{1:t}(\alpha + \epsilon_t)
  \subset
  \widehat{C}_{t}(\alpha)
  \subset
  \widehat{C}_{1:t}(\alpha - \epsilon_t)
\end{align*}
for some $\epsilon_t \in \bbR_+$ with $\epsilon_t = O(\epsilon_{\app,t})$. Therefore, $\widehat{C}_{t}$ inherits the frequentist validity of $\widehat{C}_{1:t}$.

\subsection{Comparison with existing analyses} \label{sec:comparison}

We now compare our approach with existing analyses of second-order methods. 
Consider the following update $\widetilde \theta_{t+1} = \widetilde \theta_{t} - \bC_{t} g_{t+1}$ with $t \in \bbN$,
where $\bC_t \in \symmPD$ is $\cF_t$-measurable. A key difficulty in analyzing such methods lies in the dependence of $\bC_t$ on the past iterates $(\widetilde \theta_s)_{s < t}$. The sequence $(\bC_t)$, which serves as a data-driven ``preconditioning'' matrix, induces a circular dependence: stability of $(\widetilde \theta_t)$ requires regular behavior of $\bC_t$, while controlling $\bC_t$ in turn relies on the convergence of $(\widetilde \theta_t)$. Our analysis resolves this circular dependence, without relying on the stringent assumptions commonly imposed in existing analyses. To highlight this contrast, we first review existing approaches and their limitations.

Existing analyses typically bypass this difficulty by imposing structural conditions on $\bC_t$. 
For instance, \citet{toulis2017SGD} consider a deterministic sequence $(\bC_t)$ that converges to a fixed limit and satisfies suitable spectral bounds throughout the iteration. Extensions to random matrices \citep{leluc2023asymptotic, godichon2025adaptive} similarly assume almost sure convergence of $\bC_t$. These assumptions effectively decouple the recursion from the evolution of $(\widetilde \theta_t)$, allowing the two sequences to be analyzed independently.

However, these conditions are difficult to verify in practice, since $\bC_t$ is typically data-dependent and must be estimated jointly with $(\widetilde \theta_t)$. To address this, existing works often first establish almost sure convergence of $(\widetilde \theta_t)$ via stochastic approximation techniques, such as the Robbins--Siegmund theorem \citep{duflo1997random, neri2026quantitative}. Under additional conditions, the convergence of $(\widetilde \theta_t)$ in turn yields regular behavior of $(\bC_t)$. 

Nevertheless, this approach does not fully resolve the circular dependence. Establishing almost sure convergence of $(\widetilde \theta_t)$ itself typically requires a priori control of the eigenvalues of $(\bC_t)$. While these eigenvalue conditions are weaker than direct structural assumptions on $(\bC_t)$, they remain difficult to verify. As a result, existing works often resort to non-trivial modifications of the update rule, such as truncation schemes \citep{bercu2020efficient} or diagonal loading techniques \citep{godichon2025adaptive}, to enforce the desired spectral control.

Returning to our approach, we analyze the recursion directly through a carefully constructed inductive argument combined with an exit time analysis. This allows us to control the coupled system $(\theta_t, \bOmega_t)$ simultaneously, thereby resolving the circular dependence without relying on asymptotic arguments, almost sure spectral control, or algorithmic modifications of $\bOmega_t$. 
Consequently, we obtain a fully non-asymptotic characterization with uniform high-probability control over the entire trajectory, reflecting the intrinsic dynamics of the algorithm rather than externally imposed regularity conditions.

\subsection{Verification of assumptions} \label{sec:verification}

In this subsection, we demonstrate that the assumptions \eqref{A0}--\eqref{A5} hold under a natural statistical model. To this end, we focus on the logistic regression model. 
Let $(Y_t)_{t \in \bbN}$ and $(x_t)_{t \in \bbN}$ denote the sequences of response variables 
and $p$-dimensional covariates, respectively. We consider the logistic regression 
model with a fixed design, that is, the covariate $x_t$ is non-random for all $t \in \bbN$.

For $t \in \bbN$ and $\theta \in \Theta$, the negative log-likelihood is given by
\begin{align} \label{def:logit_loss}
    \ell_{t}(\theta) = b( x_t^{\top} \theta ) - Y_t x_t^{\top} \theta,
\end{align}
where $b(\cdot) = \log (1 + \exp(\cdot))$. Note that $b(\cdot)$ is three times differentiable with derivatives $b'$, $b''$ and $b'''$, respectively. It is well-known that $\bbE Y_t = b'(x_t^{\top} \theta_{\star})$ and $\operatorname{Var}(Y_t) = b''(x_t^{\top} \theta_{\star})$. Since
\begin{align*}
    \bbE \ell_{t}(\theta) = b( x_t^{\top} \theta ) - b'(x_t^{\top} \theta_{\star}) x_t^{\top} \theta
\end{align*}
and $b''(\cdot) > 0$, both maps $\theta \mapsto \ell_t(\theta)$ and $\theta \mapsto \bbE \ell_t(\theta)$ are strictly convex. In particular, $\bbE \ell_t(\theta)$ has the unique minimizer $\theta_{\star}$. Hence, \eqref{A0} is satisfied.

We now verify \eqref{A1}--\eqref{A5}. We first focus on \eqref{A5}, which concerns the theoretical properties of (penalized) M-estimators. The behavior of M-estimators has been extensively studied in the literature. 
In particular, \citet{sur2019modern} established the existence and uniqueness of the MLE $\hat \theta_t^{\ML}$ when $p/n$ is below a certain value as $n, p \to \infty$. \citet{ostrovskii2021finite} showed that $n \gg p$ is sufficient to ensure the convergence rate in \eqref{A5} under a self-concordant smoothness condition. Notably, the logistic regression model falls within their framework \citep[Appendix D]{ostrovskii2021finite}. 

For the penalized estimator $\hat \theta_t^{\MAP}$, \citet{lee2026online} showed that, under a normal prior $N(\theta_0, \bOmega_0^{-1})$, if $\| \bOmega_0 \|_2 = O(1)$ and $\| \bOmega_0^{1/2} (\theta_{\star} - \theta_0) \|_2^2 \lesssim p$, then $\hat \theta_t^{\MAP}$ is asymptotically indistinguishable from the MLE $\hat \theta_t^{\ML}$. Consequently, the bias induced by the prior $\Pi_0$ becomes negligible, and hence $\| \hat \theta_t^{\MAP} - \theta_{\star} \|_2$ in \eqref{A5} attains the same optimal rate as $\hat \theta_t^{\ML}$. See Section 7 of \citet{lee2026online}; see also \citet{spokoiny2017penalized} for further details.

In Section \ref{sec:assume_example}, we provide detailed proofs that \eqref{A1}--\eqref{A4} hold under appropriate conditions. For clarity, we present a streamlined summary of these results. 
Before presenting the assumptions, we introduce the following notation: for $\tau > 0$ and $t \in \bbN$, let $I_t(\tau) = \{ s \in [t] : |x_s^{\top} \theta_{\star}| \leq \tau \}$.

\ben[label=(\textbf{Ex}), ref=\textbf{Ex}]
\item
There exist constants $K_{\min}, K_{\max} > 0$ such that
\begin{align*}
    (1 \vee r) \max_{t \in \bbN} \| x_t \|_2 \leq K_{\max}, \quad
\end{align*}
and for any $t \in \bbN$ with $t \geq K_{\max} p$
\begin{align*}
    \lambda_{\min} \left( \sum_{s \in I_t(K_{\max})} x_s x_s^{\top} \right) \geq K_{\min} t, \quad 
    \lambda_{\max} \left( \sum_{s \in [t]} x_s x_s^{\top} \right) \leq K_{\max} t.
\end{align*}
Furthermore, for all $t_0 \in \bbN$ with $t_0 \geq K_{\max} (p + \rmx)$, $\bbP_{\theta_{\star}} ( \hat \theta_{t_0}^{\MAP} \in \Theta_r ) \geq 1 - e^{-\rmx}$.
\label{Ex}
\een
Under \eqref{Ex}, we verify \eqref{A1}--\eqref{A4}.

\begin{corollary} \label{coro:verification}
    For given $\rmx, r > 0$, suppose that \eqref{Ex} holds. Then, there exist a constant $K = K(K_{\min}, K_{\max}) > 0$ and an event $\scrE(\rmx)$ with $\bbP_{\theta_{\star}}(\scrE(\rmx)) \geq 1 - 3e^{-\rmx}$ such that, on $\scrE(\rmx)$, for any $t \in \bbN$ with $t > t_0$,
    \begin{align*}
        \inf_{ \{ \overline \theta_s \}_{s=1}^{t} \subset \Theta_r }  \lambda_{\min} \left( \sum_{s = 1}^{t} \nabla^2 \ell_s(\overline \theta_s) \right) &\geq K^{-1} t, \\
        \sum_{s = t_0 +1}^{t} \big\{ \ell_s(\theta_{\star}) - \ell_s(\theta_{s-1}) \big\} &\leq K \rmx, \\
        \sum_{s = t_0 +1}^{t \wedge t_{\exit}} \langle g_s, \bOmega_s^{-1} g_s \rangle &\leq K \big( p \log (t \wedge t_{\exit}) + \rmx \big),
    \end{align*}
    and for any $\theta, \theta + h, \theta_1, \theta_2 \in \Theta_r$ and $t \in \bbN$
    \begin{align*}
        &\max_{t \in \bbN} \sup_{\theta \in \Theta_r} \| \nabla^2 \ell_t(\theta) \|_2 \leq K, & 
        &\max_{t \in \bbN} \sup_{\theta, \theta' \in \Theta_r}
        \dfrac{ \| \nabla^2 \ell_t(\theta) - \nabla^2 \ell_t(\theta')  \|_2 }{ \| \theta - \theta' \|_2 }
        \leq K, \\
        &e^{-K \| h \|_2 } \bF_{t, \theta} 
        \preceq
        \bF_{t, \theta+h} 
        \preceq
        e^{K \| h \|_2 } \bF_{t, \theta}, & 
        &\nabla^{2} \ell_t (\theta_1) 
        \preceq ( 1 + K \| \theta_1 - \theta_2 \|_2 ) \nabla^{2} \ell_t (\tilde \theta)
    \end{align*}
    for some $\tilde \theta = \tilde \theta(\theta_1, \theta_2) \in \Theta_r$ satisfying
    \begin{align*}
        \ell_t(\theta_1)
        =
        \ell_t(\theta_2)
        + \langle \nabla \ell_t(\theta_2), \theta_1- \theta_2 \rangle
        + \dfrac{1}{2} \langle \nabla^2 \ell_t ( \tilde \theta ), (\theta_1- \theta_2)^{\otimes 2} \rangle.
    \end{align*}
\end{corollary}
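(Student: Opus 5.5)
The plan is to treat the deterministic conditions and the stochastic conditions separately. For the deterministic ones, everything reduces to scalar properties of $b$, using $\nabla^2\ell_t(\theta)=b''(x_t^\top\theta)\,x_tx_t^\top$. The key facts are $0<b''\le 1/4$ and the self-concordance inequality $|b'''|\le b''$. The latter gives
\[
e^{-|v|}\,b''(u)\le b''(u+v)\le e^{|v|}\,b''(u).
\]
From these facts the deterministic bounds follow directly:
\begin{itemize}
\item The bound $\|\nabla^2\ell_t(\theta)\|_2\le K_{\max}^2/4$ is immediate.
\item The Lipschitz bound on the Hessian follows from $|b'''|\le 1/4$ and $\|x_t\|_2\le K_{\max}$.
\item The two-sided bound $e^{-K\|h\|_2}\bF_{t,\theta}\preceq\bF_{t,\theta+h}\preceq e^{K\|h\|_2}\bF_{t,\theta}$ follows termwise from the self-concordance inequality with $|v|=|x_s^\top h|\le K_{\max}\|h\|_2$.
\end{itemize}
For the (\ref{A2})-type inequality, I take $\tilde\theta$ from the Lagrange form of the Taylor remainder along the segment $[\theta_2,\theta_1]$. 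Then $b''(x_t^\top\theta_1)/b''(x_t^\top\tilde\theta)\le e^{K_{\max}\|\theta_1-\theta_2\|_2}$. Since $\|\theta_1-\theta_2\|_2\le 2r$ and $(1\vee r)K_{\max}$ is bounded, the exponential is at most $1+K\|\theta_1-\theta_2\|_2$.

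For the eigenvalue lower bound (\ref{A1}), I restrict the sum to $s\in I_t(K_{\max})$. There $|x_s^\top\theta_\star|\le K_{\max}$ and $|x_s^\top(\overline\theta_s-\theta_\star)|\le K_{\max}r\le K_{\max}$. Hence $b''(x_s^\top\overline\theta_s)\ge b''(2K_{\max})$, and \eqref{Ex} yields $\lambda_{\min}\ge b''(2K_{\max})K_{\min}t$ for $t\ge K_{\max}p$. This applies to all $t>t_0$ because $t_0\ge K_{\max}(p+\rmx)$.

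For \eqref{A4-a}, I would use the exact likelihood-ratio structure rather than a generic concentration bound. Since $\theta_{s-1}$ is $\cF_{s-1}$-measurable and the model is well specified,
\[
\bbE\big[\exp\{\ell_s(\theta_\star)-\ell_s(\theta_{s-1})\}\mid\cF_{s-1}\big]=\bbE\big[p_{s,\theta_{s-1}}(Y_s)/p_{s,\theta_\star}(Y_s)\mid\cF_{s-1}\big]=1.
\]
So $t\mapsto\exp\{\sum_{s=t_0+1}^t(\ell_s(\theta_\star)-\ell_s(\theta_{s-1}))\}$ is a nonnegative martingale with mean one. Ville's maximal inequality then gives, uniformly in $t$, that the sum is at most $\rmx$ with probability at least $1-e^{-\rmx}$.

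For \eqref{A4-b}, the structural observation is that in logistic regression $\bH_s=b''(u_{s-1})x_sx_s^\top$ with $u_{s-1}=x_s^\top\theta_{s-1}$. This makes $\bOmega_s$ $\cF_{s-1}$-measurable, which removes the ordering difficulty discussed in Section~\ref{ssec:alg}. I will use the following notation:
\begin{itemize}
\item $w_s=b''(u_{s-1})$ and $a_s=x_s^\top\bOmega_s^{-1}x_s$;
\item $\xi_s=Y_s-b'(u_\star)$, where $u_\star=x_s^\top\theta_\star$.
\end{itemize}
Then
\[
\langle g_s,\bOmega_s^{-1}g_s\rangle\le 2\xi_s^2a_s+2(b'(u_{s-1})-b'(u_\star))^2a_s.
\]
For $s\le t_{\exit}$ we have $\theta_{s-1}\in\Theta_r$, so $|u_{s-1}-u_\star|\le K_{\max}r$. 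By self-concordance this gives $b''(u_\star)\le e^{K}w_s$ and $(b'(u_{s-1})-b'(u_\star))^2\le Cw_s$. Moreover $\bbE[\xi_s^2\mid\cF_{s-1}]=b''(u_\star)$, and the matrix determinant lemma gives
\[
w_sa_s=1-\det\bOmega_{s-1}/\det\bOmega_s\le\log(\det\bOmega_s/\det\bOmega_{s-1}).
\]
Consequently the predictable part is at most $C\log\{\det\bOmega_t/\det\bOmega_{t_0}\}\le Cp\log t$, since $\lambda_{\max}(\bOmega_t)\lesssim t$ and $\lambda_{\min}(\bOmega_{t_0})\gtrsim t_0$.

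What remains is the martingale $\sum_s(\xi_s^2-\bbE[\xi_s^2\mid\cF_{s-1}])a_s\mathbf 1\{s\le t_{\exit}\}$. Its increments are bounded by $a_s\le C/t_0$, and its conditional variance is at most $\sum_s b''(u_\star)a_s^2\lesssim p\log t$. I expect this term to be the main obstacle, because the bound must hold uniformly over $t$ with only an $\rmx+\log t$ price. I would handle it with Freedman's inequality applied to the stopped martingale, combined with a peeling/union bound over dyadic blocks $t\in[2^k,2^{k+1})$ at level $\rmx+\log t$.

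Finally, I would intersect the Ville event, the Freedman event, and the event $\{\hat\theta_{t_0}^{\MAP}\in\Theta_r\}$ from \eqref{Ex}. The first two events and the third event each fail with probability at most $e^{-\rmx}$, which gives $\bbP_{\theta_\star}(\scrE(\rmx))\ge1-3e^{-\rmx}$. The eigenvalue bound holds deterministically under \eqref{Ex}.
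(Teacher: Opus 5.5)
Your proposal is correct and follows essentially the same route as the paper's proofs of Propositions~\ref{prop:A1}, \ref{prop:A2}, \ref{prop:A5}, \ref{prop:A3-a} and \ref{prop:A3-b}. The deterministic bounds come from scalar self-concordance of $b$, \eqref{A4-a} from Ville's inequality for the likelihood-ratio martingale, and \eqref{A4-b} from the predictability of $\bOmega_s$, the log-determinant (elliptic potential) bound and a martingale concentration inequality, where you use Freedman plus peeling in place of the paper's sub-exponential martingale lemma. The only slip is cosmetic: bounds like $|x_s^\top(\overline\theta_s-\theta_\star)|\le K_{\max}$ and the exponent in the \eqref{A2}-type inequality should be controlled via $r\|x_s\|_2\le(1\vee r)\|x_s\|_2\le K_{\max}$, not via $K_{\max}r\le K_{\max}$ or $2rK_{\max}$, which is what keeps $K$ free of $r$.
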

\begin{proof}
   See the proofs of Propositions \ref{prop:A1}, \ref{prop:A2}, \ref{prop:A3-b}, \ref{prop:A5} and \ref{prop:A3-a}.
   Corollary \ref{coro:verification} follows as a special case.
\end{proof}


Although we verify \eqref{A1}--\eqref{A4} in Corollary \ref{coro:verification}, \eqref{Ex} involves a somewhat strong assumption, namely that $\sup_{t \in \bbN} \| x_t \|_2 \leq K_{\max}$. This requirement arises from controlling the smoothness parameters $L_1, L_2$, and $L_3$, which may depend on the dimension $p$. 
For example, a natural bound for $L_2$ is of the form
\begin{align*}
    \sup_{\theta \in \Theta_r} \| \nabla^2 \ell_t(\theta) \|_2 \lesssim p.
\end{align*}
Under the batch learning regime, such dimension dependence can be mitigated. As a simple illustration, suppose that the covariates $(X_t)_{t \in \bbN}$ are i.i.d. with $X_t \sim N(0, \bI_p)$ for each $t \in \bbN$. 
In this case, one can show that if $t \geq C (p + \rmx)$ for a sufficiently large constant $C > 0$, then
\begin{align*}
    \sup_{\theta \in \Theta_r} \big\| \nabla^2 L_{1:t}(\theta) \big\|_2 \lesssim t,
\end{align*}
which no longer depends on $p$; see \citet{lee2025advances} and \citet{lee2026online} for related results. However, in one-pass procedures, the smoothness of the empirical risk $\ell_t$ typically needs to be controlled at each iteration. For this reason, we impose a stronger condition in \eqref{Ex}.

A similar phenomenon has been observed in other one-pass procedures in the literature. Many previous works treat the dimension $p$ as a fixed constant \citep{chen2024online, bercu2020efficient, godichon2025adaptive, toulis2017SGD}. 
Notably, \citet{chen2020SGD} developed a non-asymptotic analysis that accounts for dimension dependence. However, for the step size $\zeta_t = \zeta_0 t^{-0.501}$, which exhibits strong empirical performance, they require $t \gg p^{7.984}$ to ensure valid uncertainty quantification; see Theorem 4.2 in \citet{chen2020SGD}. 
This requirement highlights a gap between one-pass procedures and batch learning. We conjecture that this gap can be relaxed and plan to investigate this direction in future work.

\section{Numerical experiments} \label{sec:experiments}

In this section, we present numerical experiments to complement our theoretical findings and to evaluate the empirical performance of the proposed Bayesian online one-pass (BOO) algorithm. Our primary goal is to assess both the estimation accuracy and the validity of uncertainty quantification in an online setting. 

We compare our method against several benchmark methods: first-order stochastic gradient descent (SGD), averaged SGD (ASGD), and two variants of the proposed method—BOO without warm-start ($t_0 = 0$) and BOO with $t_0 = \lceil p \log (p \vee 3) + 5 \rceil$ (corresponding to $M = 1$ and $\rmx = 5$ in \eqref{A5}). Here, the ASGD algorithm corresponds to the running average of the iterates $(\hat \theta_t^{\SGD})$ from SGD; that is, we consider the following:
\begin{align*}
    \hat \theta_t^{\ASGD} = t^{-1} \sum_{s=1}^{t} \hat \theta_s^{\SGD}.
\end{align*}
To ensure a fair comparison in terms of initialization, we additionally consider warm-started versions of SGD and ASGD, denoted by wSGD and wASGD. These estimators share the same initialization $t_0 = \lceil p \log (p \vee 3) + 5 \rceil$ as BOO with warm-start, and are initialized using the batch estimator $\hat \theta_{t_0}^{\MAP}$. Specifically, wSGD is defined by
\begin{align*}
\hat \theta_t^{\wSGD} &= \hat \theta_{t-1}^{\MAP}, & &\forall t \in \bbN \text{ with } t \leq t_0 \\
\hat \theta_t^{\wSGD} &= \hat \theta_{t-1}^{\wSGD} - \zeta_t g_t, & &\forall t \in \bbN \text{ with } t > t_0
\end{align*}
with step-size $\zeta_t \in \bbR_+$. The wASGD is defined analogously as the running average of $(\hat \theta_t^{\wSGD})$.

We consider generalized linear models, specifically logistic and Poisson regression. For the simulation, we use a total sample size $N = 10^4$ with varying dimensions. For the data generating process, we consider a sequence of observations $\bY = (Y_t)_{t \in [N]}$ whose joint probability measure is $\bbP_{\theta_{\star}}$, where $\theta_{\star} \in \mathbb{R}^p$ is a unit-norm vector defined as
$\theta_{\star} = \frac{v}{\|v\|_2}$ with $v = (v_j)_{j \in [p]}$ and $v_j = (-1)^{(j-1)} j$.
All reported results are obtained by averaging over 500 independent repetitions.
For SGD, we set the step-size $\zeta_t = \zeta_0 t^{-\alpha}$ and tune $(\zeta_0, \alpha)$. We found that the performance of SGD is highly sensitive to the choice of $(\zeta_0, \alpha)$. 
Accordingly, we fix $\alpha = 0.501$, which was empirically observed to yield strong performance, and set $\zeta_0 = 1/2$ for both logistic and Poisson regression.

We focus on two types of evaluation metrics. For estimation, we evaluate the $\ell_2$-error $\| \theta_t - \theta_{\star} \|_2$. For inference, we examine the frequentist coverage of the credible set constructed from $\Pi_N$. 
To conduct statistical inference for SGD, we adopt the \textit{plug-in} approach in \citet{chen2020SGD} utilizing $\bF_{N, \SGD} = \sum_{t=1}^{N} \nabla^2 \ell_t( \hat \theta_{t-1}^{\SGD} )$ and $\bV_{N, \SGD} = \sum_{t=1}^{N} \nabla \ell_t( \hat \theta_{t-1}^{\SGD} ) \nabla \ell_t( \hat \theta_{t-1}^{\SGD} )^{\top}$. For a confidence level $\alpha \in (0, 1)$ and coordinate index $j \in [p]$, the confidence and credible sets are given by
\begin{align*}
    \bC_j^{\SGD}(\alpha) &= \left\{ \vartheta_j : 
    \hat \theta_{N,j}^{\ASGD} - z_{\alpha/2} \cdot \hat \sigma_{j, \SGD} \leq \vartheta_j \leq 
    \hat \theta_{N,j}^{\ASGD} + z_{\alpha/2} \cdot \hat \sigma_{j, \SGD}  \right\}, \\
    \bC_j^{\BOO}(\alpha) &= \left\{ \vartheta_j : 
    \theta_{N,j} - z_{\alpha/2} \cdot \hat \sigma_{j, \BOO} \leq \vartheta_j \leq
    \theta_{N,j} + z_{\alpha/2} \cdot \hat \sigma_{j, \BOO}  \right\},
\end{align*}
where $z_{\alpha/2}$ denotes the upper $\alpha/2$ quantile of $N(0, 1)$, and $\hat \sigma_{j, \SGD}$ and $\hat \sigma_{j, \BOO}$ denote the $j$-th diagonal component of $\bF_{N, \SGD}^{-1} \bV_{N, \SGD} \bF_{N, \SGD}^{-1}$ and $\bOmega_{N}^{-1}$, respectively. 
Setting $\alpha = 0.05$, we report the coverage probability (CP) and the length (Len) of intervals in Tables \ref{tab:coverage_indep} and \ref{tab:coverage_corr}.
Finally, we also investigate the sensitivity of the algorithm to the phase transition time $t_0$ (Figure \ref{fig:sensitivity_M}) and the initial distance between $\theta_0$ and $\theta_{\star}$ (Figure \ref{fig:sensitivity_init_dist}).


\begin{figure}[t]
    \centering
    \includegraphics[width=\textwidth]{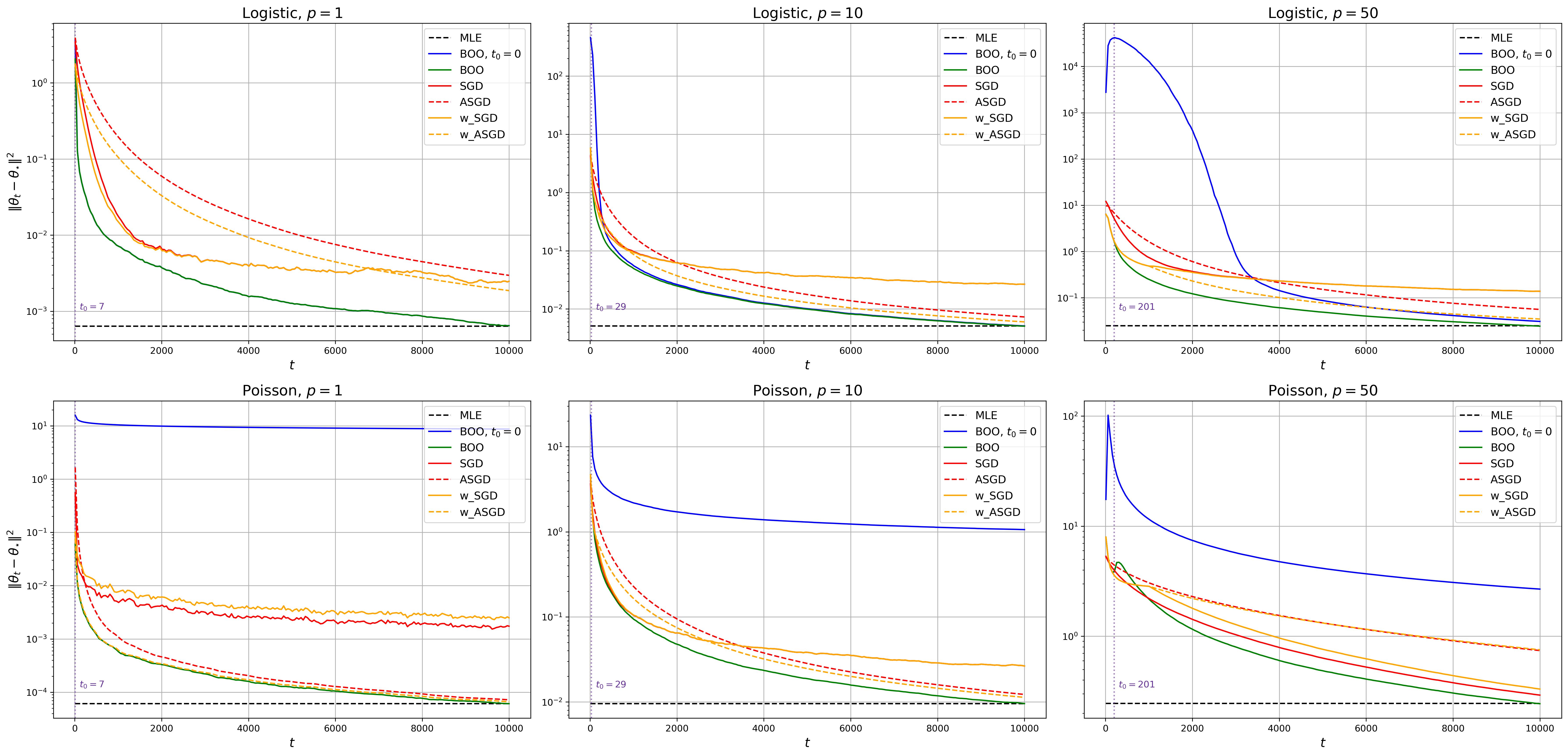}
    \caption{$\ell_2$-error across different parameter dimensions $p$.}
    \label{fig:est_varying_dim}
\end{figure}


\subsection{Estimation accuracy with varying dimensions}

We first examine the estimation accuracy of each method across varying parameter dimensions $p \in \{ 1, 10, 50 \}$. According to \eqref{A5} with $M = 1$ and $\rmx = 5$, we consider $t_0 \in \{ 7, 29, 201 \}$, respectively. Figure \ref{fig:est_varying_dim} shows the $\ell_2$-error of the estimators for both logistic and Poisson regression models. For this simulation, we generate the covariates $\bX = (X_t)_{t \in [N]}$ as $X_t \sim N(0, \bI_p)$ in the logistic model and $X_t = Z_t / \|Z_t\|_2$ with $Z_t \sim N(0, \bI_p)$ in the Poisson model, where the normalization is used to improve numerical stability.

Overall, the BOO estimator with $t_0 = \lceil p \log (p \vee 3) + 5 \rceil$ exhibits stable performance across a wide range of dimensions. In particular, it consistently outperforms SGD and ASGD in terms of $\ell_2$ error, and this remains true even when SGD and ASGD are equipped with a warm-start phase. 
The gap becomes more pronounced in higher dimensions, where the second-order structure of BOO yields more favorable scaling with respect to $p$. We also observe that the BOO estimator with $t_0 = 0$ exhibits slightly degraded performance compared to its $t_0 > 0$ counterpart, particularly in the early stage of the iteration. This effect persists and leads to worse final performance in the Poisson model. This reflects the role of the initial phase in ensuring that the iterates enter a suitable local region, as suggested in Section \ref{sec:theory}.

\subsection{Estimation and coverage under independent and correlated designs}

\begin{figure}[ht!]
    \centering
    \includegraphics[width=\textwidth]{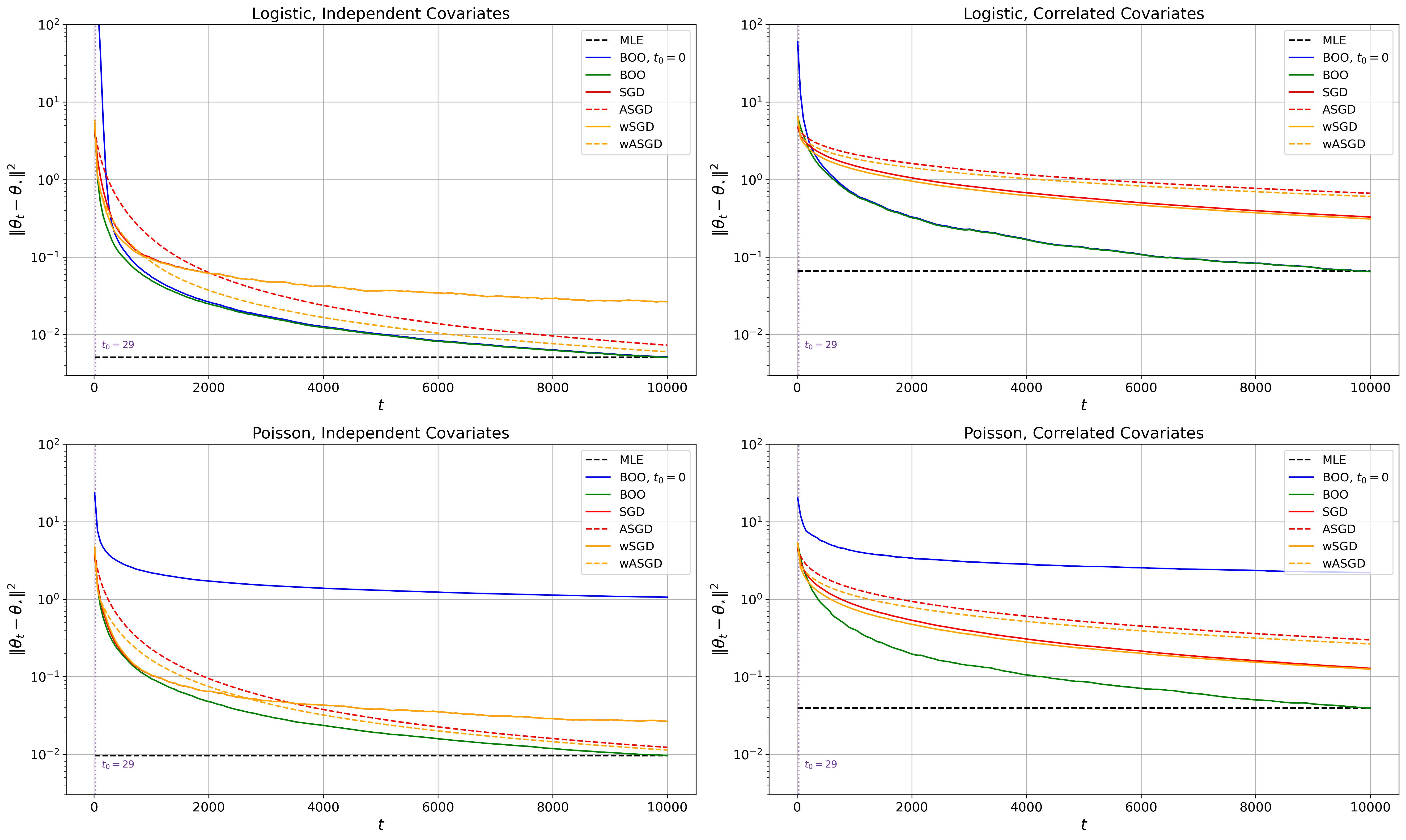}
    \caption{$\ell_2$-error across independent and correlated design.}
    \label{fig:corr}
\end{figure}


We next investigate both estimation accuracy and the validity of uncertainty quantification under different design settings. To this end, we consider two covariance settings, namely independent and correlated, with $p = 10$ for both logistic and Poisson regression models. Specifically, we generate $\bX = (X_t)_{t \in [N]}$ such that $X_t \overset{\rm i.i.d.}{\sim} N(0, \bSigma)$ in the logistic model and $X_t = Z_t / \|Z_t\|_2$ with $Z_t \overset{\rm i.i.d.}{\sim} N(0, \bSigma)$ in the Poisson model. 
For the independent design, we take $\bSigma = \bI_p$, whereas for the correlated design we set
\begin{align*}
\bSigma = \bA \operatorname{diag} \left( ( j^2 \slash p^2 )_{j=1}^{p} \right) \bA^{\top},
\end{align*}
where $\bA$ is a randomly generated orthogonal matrix. This correlated design is inspired by \citet{boyer2023asymptotic} and allows us to assess the robustness of each method with respect to the covariance structure.

We first consider point estimation performance. Under the independent design, the first-order methods---SGD and ASGD---perform slightly worse than BOO with $t_0 = 29$. In contrast, under the correlated design, their performance deteriorates substantially, indicating that first-order procedures are more sensitive to the dependence structure of the covariates. The BOO estimators exhibit a different pattern. In the logistic model, both BOO with $t_0=0$ and BOO with $t_0=29$ achieve performance comparable to that of the MLE, regardless of the dependency structure. In the Poisson model, however, only BOO with $t_0 = 29$ remains comparable to the MLE, whereas BOO with $t_0 = 0$ shows a noticeable loss of accuracy. Overall, these results indicate that the second-order structure of BOO, combined with a suitable warm-start phase, leads to substantially improved robustness with respect to the design structure.

This pattern becomes more apparent when we turn to uncertainty quantification. Under the correlated design, the coverage probability of SGD collapses, due to the substantial bias of $\hat \theta_{N}^{\ASGD}$. A similar issue arises for BOO with $t_0 = 0$ in the Poisson regression model, 
which also exhibits invalid coverage. By contrast, BOO with $t_0 = 29$ continues to deliver coverage probabilities close to those of the MLE, together with comparable interval lengths. Consequently, BOO with $t_0 = 29$ is the only online method considered here that consistently matches the benchmark performance of the MLE in both estimation and inference.

These findings support two conclusions. First, the second-order BOO method provides robustness to the design structure that is not shared by the first-order alternatives. This robustness is already visible in point estimation, and becomes even more evident in coverage performance. Second, the warm-start phase is essential for stable one-pass Bayesian updating, especially in more challenging settings such as Poisson regression.


\begin{table}[t!]
    \centering
    \caption{Coverage and CI length comparison (independent design)}
    \label{tab:coverage_indep}
    
    \setlength{\tabcolsep}{4pt}
    \renewcommand{\arraystretch}{0.9}
    \fontsize{7}{10}\selectfont
    
    \begin{tabular}{c cc| cc |cc | cc | cc | cc| cc |cc}
    \toprule
    \multirow{3}{*}{Coef.}
    & \multicolumn{8}{c|}{Logistic}
    & \multicolumn{8}{c}{Poisson} \\
    \cline{2-17}
    
    & \multicolumn{2}{c}{MLE}
    & \multicolumn{2}{c}{BOO}
    & \multicolumn{2}{c}{BOO, $t_0=0$}
    & \multicolumn{2}{c|}{SGD}
    & \multicolumn{2}{c}{MLE}
    & \multicolumn{2}{c}{BOO}
    & \multicolumn{2}{c}{BOO, $t_0=0$}
    & \multicolumn{2}{c}{SGD} \\
    
    \cline{2-17}
    & CP & Len & CP & Len & CP & Len & CP & Len
    & CP & Len & CP & Len & CP & Len & CP & Len \\
    \hline
    
    $\theta_1$  & 0.950 & 0.086 & 0.946 & 0.086 & 0.950 & 0.087 & 0.928 & 0.089
                & 0.946 & 0.121 & 0.944 & 0.121 & 0.836 & 0.119 & 0.938 & 0.122 \\
    
    $\theta_2$  & 0.954 & 0.087 & 0.954 & 0.087 & 0.956 & 0.087 & 0.926 & 0.089
                & 0.956 & 0.121 & 0.960 & 0.121 & 0.834 & 0.119 & 0.920 & 0.122 \\
    
    $\theta_3$  & 0.948 & 0.087 & 0.954 & 0.087 & 0.956 & 0.087 & 0.920 & 0.089
                & 0.946 & 0.121 & 0.940 & 0.121 & 0.820 & 0.119 & 0.912 & 0.122 \\
    
    $\theta_4$  & 0.948 & 0.087 & 0.948 & 0.087 & 0.948 & 0.088 & 0.928 & 0.089
                & 0.956 & 0.121 & 0.956 & 0.121 & 0.820 & 0.118 & 0.930 & 0.122 \\
    
    $\theta_5$  & 0.946 & 0.088 & 0.950 & 0.088 & 0.950 & 0.088 & 0.932 & 0.090
                & 0.920 & 0.121 & 0.922 & 0.121 & 0.814 & 0.118 & 0.844 & 0.122 \\
    
    $\theta_6$  & 0.964 & 0.088 & 0.972 & 0.088 & 0.966 & 0.088 & 0.880 & 0.091
                & 0.942 & 0.121 & 0.936 & 0.121 & 0.798 & 0.118 & 0.904 & 0.121 \\
    
    $\theta_7$  & 0.954 & 0.089 & 0.962 & 0.089 & 0.964 & 0.089 & 0.910 & 0.091
                & 0.958 & 0.121 & 0.960 & 0.121 & 0.842 & 0.118 & 0.928 & 0.121 \\
    
    $\theta_8$  & 0.956 & 0.089 & 0.950 & 0.089 & 0.950 & 0.090 & 0.918 & 0.092
                & 0.960 & 0.121 & 0.958 & 0.120 & 0.840 & 0.118 & 0.938 & 0.121 \\
    
    $\theta_9$  & 0.956 & 0.090 & 0.956 & 0.090 & 0.954 & 0.091 & 0.888 & 0.093
                & 0.952 & 0.120 & 0.954 & 0.120 & 0.828 & 0.118 & 0.942 & 0.121 \\
    
    $\theta_{10}$ & 0.944 & 0.091 & 0.942 & 0.091 & 0.942 & 0.092 & 0.836 & 0.094
                  & 0.952 & 0.120 & 0.946 & 0.120 & 0.832 & 0.118 & 0.942 & 0.121 \\
    
    \bottomrule
    \end{tabular}
\end{table}

\begin{table}[t!] 
    \centering 
    \caption{Coverage and CI length comparison (correlated design)} 
    \label{tab:coverage_corr}  
    \setlength{\tabcolsep}{4pt} 
    \renewcommand{\arraystretch}{0.9} 
    \fontsize{7}{10}\selectfont  
    
    \begin{tabular}{c cc|cc|cc|cc|cc|cc|cc|cc} 
    \toprule 
    \multirow{3}{*}{Coef.} 
    & \multicolumn{8}{c|}{Logistic} 
    & \multicolumn{8}{c}{Poisson} \\ 
    \cline{2-17}  
    
    & \multicolumn{2}{c}{MLE} 
    & \multicolumn{2}{c}{BOO} 
    & \multicolumn{2}{c}{BOO, $t_0=0$} 
    & \multicolumn{2}{c|}{SGD} 
    
    & \multicolumn{2}{c}{MLE} 
    & \multicolumn{2}{c}{BOO} 
    & \multicolumn{2}{c}{BOO, $t_0=0$} 
    & \multicolumn{2}{c}{SGD} \\ 
    
    \cline{2-17}  
    
    & CP & Len & CP & Len & CP & Len & CP & Len 
    & CP & Len & CP & Len & CP & Len & CP & Len \\ 
    
    \hline  
    
    $\theta_1$   & 0.970 & 0.264 & 0.972 & 0.264 & 0.972 & 0.265 & 0.008 & 0.274 
    & 0.962 & 0.212 & 0.960 & 0.212 & 0.880 & 0.208 & 0.036 & 0.212 \\  
    
    $\theta_2$   & 0.938 & 0.293 & 0.942 & 0.293 & 0.940 & 0.293 & 0.010 & 0.304 
    & 0.956 & 0.233 & 0.952 & 0.232 & 0.884 & 0.228 & 0.058 & 0.233 \\  
    
    $\theta_3$   & 0.970 & 0.200 & 0.972 & 0.201 & 0.970 & 0.201 & 0.000 & 0.212 
    & 0.950 & 0.164 & 0.946 & 0.164 & 0.886 & 0.161 & 0.002 & 0.164 \\  
    
    $\theta_4$   & 0.954 & 0.552 & 0.950 & 0.551 & 0.954 & 0.551 & 1.000 & 0.568 
    & 0.952 & 0.429 & 0.956 & 0.427 & 0.890 & 0.419 & 0.992 & 0.430 \\  
    
    $\theta_5$   & 0.946 & 0.235 & 0.956 & 0.235 & 0.952 & 0.235 & 0.000 & 0.242 
    & 0.966 & 0.189 & 0.966 & 0.188 & 0.890 & 0.185 & 0.010 & 0.190 \\  
    
    $\theta_6$   & 0.952 & 0.218 & 0.950 & 0.219 & 0.950 & 0.219 & 0.000 & 0.232 
    & 0.962 & 0.177 & 0.966 & 0.177 & 0.888 & 0.173 & 0.004 & 0.176 \\  
    
    $\theta_7$   & 0.952 & 0.420 & 0.960 & 0.420 & 0.956 & 0.420 & 0.996 & 0.433 
    & 0.956 & 0.328 & 0.958 & 0.327 & 0.890 & 0.320 & 0.914 & 0.328 \\  
    
    $\theta_8$   & 0.956 & 0.397 & 0.960 & 0.397 & 0.956 & 0.397 & 0.850 & 0.408 
    & 0.954 & 0.311 & 0.952 & 0.310 & 0.890 & 0.304 & 0.434 & 0.312 \\  
    
    $\theta_9$   & 0.962 & 0.220 & 0.966 & 0.220 & 0.964 & 0.221 & 0.298 & 0.229 
    & 0.962 & 0.177 & 0.966 & 0.177 & 0.886 & 0.174 & 0.486 & 0.177 \\  
    
    $\theta_{10}$   & 0.950 & 0.215 & 0.952 & 0.215 & 0.950 & 0.215 & 0.758 & 0.223 
    & 0.938 & 0.173 & 0.942 & 0.172 & 0.866 & 0.169 & 0.936 & 0.173 \\  
    
    \bottomrule 
    \end{tabular} 
\end{table}


\subsection{Sensitivity analysis}

We next investigate the sensitivity of BOO to key hyperparameters, focusing on $t_0$ and $\theta_0$. 
Throughout this subsection, we fix $p = 10$. These parameters play an important role in the algorithm, as they determine whether the iterates enter and remain within a suitable local region where the online updates are stable. The theoretical condition in \eqref{A5} requires $M$ to be sufficiently large, 
which may be conservative in practice. Moreover, \eqref{A5} implicitly relies on the well-posedness of the prior parameters, making it important to understand how the empirical performance depends on the choice of $\theta_0$. We therefore examine the sensitivity of BOO with respect to $M$ and $\theta_0$.

\subsubsection{Sensitivity to the phase transition time}

\begin{figure}[t]
    \centering
    \includegraphics[width=\textwidth]{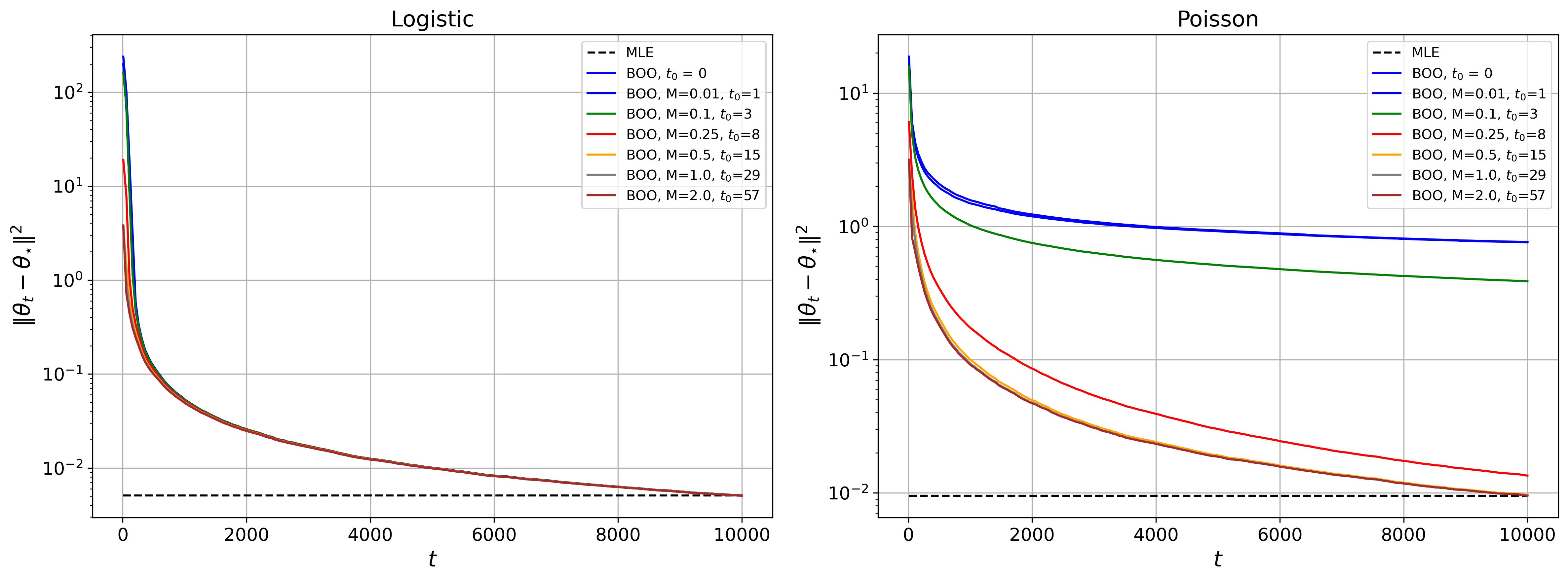}
    \caption{$\ell_2$-error across varying $M$.}
    \label{fig:sensitivity_M}
\end{figure}


We first examine the effect of $t_0$. Recall from \eqref{A5} that 
\begin{align*}
    t_0 = \lceil M( p \log (p \vee 3) + \rmx) \rceil.
\end{align*}
To assess this effect, we fix $\rmx = 5$ and vary $M$ across $\{0.01, 0.1, 0.25, 0.5, 1.0, 2.0\}$, evaluating the estimation accuracy of $\theta_t$. Although \eqref{A5} requires $M$ to be sufficiently large, we argue that $M = 1$, the value used throughout our other experiments, is sufficient for the desired empirical performance. 

Figure \ref{fig:sensitivity_M} shows that the choice of $t_0$ has a relatively mild impact once it exceeds a certain threshold. For the logistic regression model, even BOO with $t_0 = 0$ achieves performance comparable to the MLE. For the Poisson regression model, in contrast, the role of $t_0$ becomes more pronounced: BOO attains MLE-level performance in terms of $\ell_2$-error once $M$ exceeds approximately $0.25$.

\subsubsection{Sensitivity to initialization}

\begin{figure}[t]
    \centering
    \includegraphics[width=\textwidth]{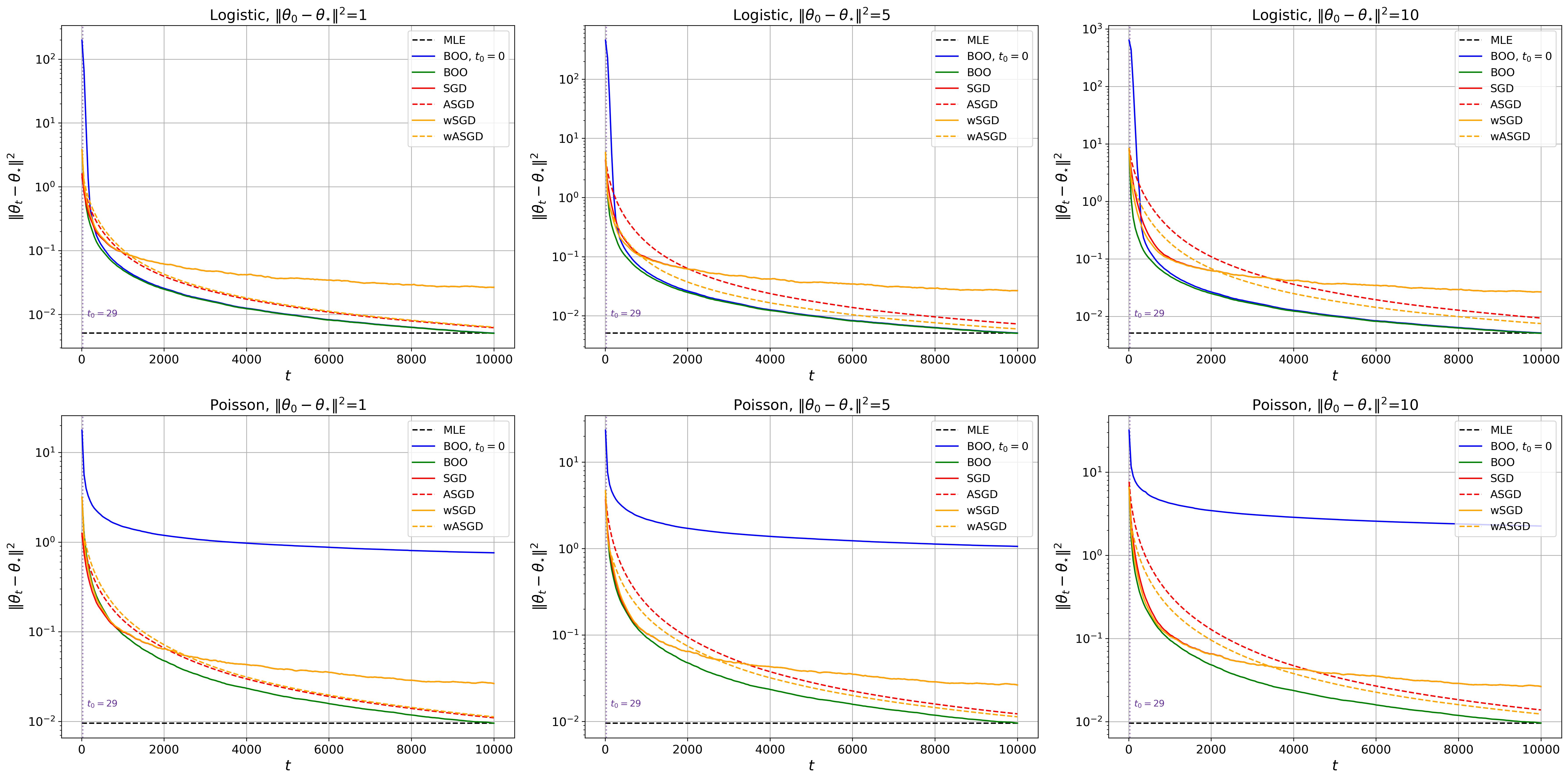}
    \caption{$\ell_2$-error across varying $\| \theta_0 - \theta_{\star} \|_2$.}
    \label{fig:sensitivity_init_dist}
\end{figure}


We next investigate the sensitivity to the initialization $\theta_0$, measured by the distance $\| \Delta_0 \|_2 = \| \theta_0 - \theta_{\star} \|_2$. Here, $\theta_0$ denotes the prior location parameter in $\Pi_0$ for BOO methods and the initial iterate for first-order methods.
In the previous experiments, we fixed $\| \Delta_0 \|_2 = \sqrt{5}$; 
here, we vary it over $\{1, \sqrt{5}, \sqrt{10}\}$ to assess the impact of initialization.

In the logistic regression model, BOO exhibits little sensitivity to the choice of $\theta_0$: 
both BOO with $t_0 = 0$ and $t_0 = 29$ show stable performance across all values of $\| \Delta_0 \|_2$. 
In contrast, the performance of ASGD deteriorates as $\| \Delta_0 \|_2$ increases. In the Poisson regression model, the behavior of BOO depends on the choice of $t_0$. BOO with $t_0 = 29$ remains stable across all values of $\| \Delta_0 \|_2$, whereas BOO with $t_0 = 0$ shows substantial degradation as $\| \Delta_0 \|_2$ increases. The performance of ASGD also deteriorates as in the logistic case.

\section{Conclusion} \label{sec:conclusion}

Despite the growing literature on Bayesian online learning, most existing methods have been guided primarily by heuristic ideas, and rigorous theoretical guarantees have remained scarce. 
To the best of our knowledge, \citet{lee2026online} provided the first theoretical justification for Bayesian online learning, showing that sequentially updated posteriors can deliver valid uncertainty quantification. However, their analysis requires the mini-batch sample size to diverge, which is typically a stringent condition in the online setting. Motivated by this limitation, we proposed in this paper a new Bayesian online learning algorithm that addresses this shortcoming, and established that it achieves the optimal convergence rate and valid uncertainty quantification under the one-pass online setting.

As a concluding remark, we offer a conjecture regarding the use of the quadratic approximation in our algorithm. Since Bayesian online learning inherently requires approximating the posterior at each step, we conjecture that, under the one-pass constraint, using the exact likelihood may in fact be theoretically \emph{less} favorable than employing an approximation. This view is empirically supported by the experiments in \citet[Section 9]{lee2026online}, and we believe it is closely tied to the geometry of posterior projection under consideration. A rigorous theoretical investigation of this phenomenon is of considerable interest, and we leave it for future work.


\addtocontents{toc}{\protect\setcounter{tocdepth}{2}}

\pagebreak
\bibliographystyle{apalike}
\bibliography{references}  
    \addtocontents{toc}{\protect\setcounter{tocdepth}{2}}
\pagebreak


\tableofcontents

\begin{appendix}

\section{Intuition and strategy of the proof} \label{sec:proof_strategy}

\subsection{Rates of convergence} \label{sec:rate_conv}

We outline the key idea of the analysis. The starting point is the following observation: the sequence $(V_t)_{t \geq t_0}$ admits a tractable recursive representation, which forms the basis of our non-asymptotic analysis.
Specifically, we establish a recursive inequality for $V_t$ of the form
\begin{align*}
    V_{t+1} \le V_t + \epsilon_t V_t^{3/2} + u_{t+1} \ \text{ on } \scrE_{1:4}(\rmx),
\end{align*}
where $\epsilon_t$ is sufficiently small to ensure stability of the recursion, while $(u_t)$ is controlled through a suitable bound on its cumulative sum. In view of the asymptotic rate of $\| \bF_{t, \theta_{\star}}^{1/2} ( \theta_{\star} - \hat \theta_t^{\ML} ) \|_2^2 = O_P(p + \log t)$ \citep{spokoiny2012parametric, lee2025advances}, we aim to show that $V_t$ increases at most logarithmically in $t$. To achieve this, we need to control the superlinear term $\epsilon_t V_t^{3/2}$ and the cumulative effect of $(u_t)$. To this end, we imposed the assumptions in Section \ref{sec:assumptions}, whose rationale we discuss throughout this subsection.

A key feature of Algorithm \eqref{Alg}, viewed as a second-order SGD method, lies in the randomness of its effective step-size. To clarify this point, consider the following standard first-order update:
\begin{align*}
    \widetilde \theta_t = \widetilde \theta_{t-1} - \eta_t g_t, \quad  t \in \bbN,
\end{align*}
where $\eta_t \in \bbR_+$ denotes the step-size. A careful choice of the sequence $(\eta_t)_{t \in \bbN}$ is crucial in online one-pass first order methods \citep[e.g.,][]{toulis2017SGD, chen2020SGD}. It is typically specified deterministically to ensure desirable statistical properties of the iterates. 
In Algorithm \eqref{Alg}, by contrast, the spectrum of $\bOmega_t^{-1}$ plays the role of an effective step-size. 
Since $\bOmega_t$ depends on the past iterates $(\theta_s)_{s=t_0}^{t-1}$, it is inherently random. 
Consequently, controlling the spectrum of $\bOmega_t$ requires uniform control over the past trajectory. This can be ensured as long as $(\theta_t)$ stays within a suitable local region such as $\Theta_r$, in which $\bOmega_t$ remains well behaved. Assumption \eqref{A1} ensures precisely this local regularity: the minimum eigenvalue of $\bOmega_t$ grows linearly in $t$, uniformly over trajectories confined to $\Theta_r$.

With the local regularity from \eqref{A1} in hand, we now return to the recursive analysis of $V_t$.
Based on Algorithm \eqref{Alg}, we can establish the following identity (see Lemma \ref{lemma:recursion}):
\begin{align} \label{eqn:basic_recursion}
    V_t = V_{t-1} 
    + \langle \bH_t, \Delta_{t-1}^{\otimes 2} \rangle
    - 2 \langle g_t, \Delta_{t-1} \rangle
    + \langle g_t, \bOmega_{t}^{-1} g_t \rangle, \quad t \in \bbN \text{ with } t > t_0.
\end{align}    
Although \eqref{eqn:basic_recursion} is exact, it is not directly amenable to analysis.
We therefore extract its essential structure, reducing \eqref{eqn:basic_recursion} to a relation that depends only on the current magnitude $V_t$, up to incremental terms whose cumulative effect can be controlled over time. The following lemma formalizes this reduction.

\begin{lemma} \label{lemma:recursion_processed}
    For given $\rmx, r > 0$, suppose that \eqref{A0}–\eqref{A2} and \eqref{A4}–\eqref{A5} hold.
    Then, on $\scrE_{1:4}(\rmx)$, we have
    \begin{align*}
        V_{t+1} \leq V_{t} + \epsilon_{t} V_{t}^{3/2} + u_{t+1}, \quad 
        \forall t \in \bbN \text{ with } t_0 \leq t < t_{\exit},
    \end{align*}
    for some non-negative sequence $(\epsilon_t)_{t \geq t_0}$ and real-valued sequence $(u_t)_{t > t_0}$ satisfying
    \begin{align*}
        \sum_{r = s}^{t} \epsilon_r &\leq K s^{-1/2}, &
        &\forall s, t \in \bbN \text{ with } t_0 \leq s < t \leq t_{\exit}, \\
        \sum_{r = t_0+1}^{t} u_r &\leq K' p \log t + K'' \big( \rmx + \log t \big), &
        &\forall t \in \bbN \text{ with } t_0 < t \leq t_{\exit},
    \end{align*}
    where $K = K(K_2, L_{1:2})$, $K' = K'(K_4)$ and $K'' = K''(K_3, K_5)$.
\end{lemma}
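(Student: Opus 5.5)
The plan is to start from the exact identity \eqref{eqn:basic_recursion} (Lemma \ref{lemma:recursion}), written for the step $t \to t+1$:
\begin{align*}
    V_{t+1} = V_{t} + \langle \bH_{t+1}, \Delta_{t}^{\otimes 2} \rangle - 2 \langle g_{t+1}, \Delta_{t} \rangle + \langle g_{t+1}, \bOmega_{t+1}^{-1} g_{t+1} \rangle .
\end{align*}
We then convert the linear gradient term into a likelihood increment plus a curvature term.

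\textbf{Step 1: Taylor expansion.} For $t_0 \le t < t_{\exit}$ we have $\theta_t \in \Theta_r$. For $t = t_0$ this follows from \eqref{A5} on $\scrE_4(\rmx)$, and for $t > t_0$ it follows from the definition of $t_{\exit}$. We apply the expansion in \eqref{A2} with $\theta = \theta_\star$ and $\theta' = \theta_t$. This gives
\begin{align*}
    -2\langle g_{t+1}, \Delta_t\rangle = 2\{\ell_{t+1}(\theta_\star) - \ell_{t+1}(\theta_t)\} - \langle \nabla^2 \ell_{t+1}(\tilde\theta_t), \Delta_t^{\otimes 2}\rangle ,
\end{align*}
where $\tilde\theta_t$ lies on the segment $[\theta_\star, \theta_t] \subset \Theta_r$.

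\textbf{Step 2: Hessian comparison.} The first half of \eqref{A2} gives $\bH_{t+1} = \nabla^2\ell_{t+1}(\theta_t) \preceq (1+\phi_{t+1})\nabla^2\ell_{t+1}(\tilde\theta_t)$ with $\phi_{t+1} \le L_1 \|\Delta_t\|_2$. Combining this with the bound $\|\nabla^2 \ell_{t+1}\|_2 \le L_2$ on $\Theta_r$ yields
\begin{align*}
    \langle \bH_{t+1} - \nabla^2 \ell_{t+1}(\tilde\theta_t), \Delta_t^{\otimes 2}\rangle \le L_1 L_2 \|\Delta_t\|_2^3 .
\end{align*}

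\textbf{Step 3: from $\|\Delta_t\|_2$ to $V_t$.} We bound $\|\Delta_t\|_2^2 \le V_t / \lambda_{\min}(\bOmega_t)$ and then use \eqref{A1} to control $\lambda_{\min}(\bOmega_t)$. By \eqref{Alg}, $\bOmega_t = \bOmega_0 + \sum_{s \le t_0}\nabla^2\ell_s(\hat\theta^{\MAP}_{t_0}) + \sum_{t_0 < s \le t}\nabla^2 \ell_s(\theta_{s-1})$. Every evaluation point here lies in $\Theta_r$ while $t < t_{\exit}$. Since also $t \ge t_0 \ge K_1(p+\rmx)$, \eqref{A1} gives on $\scrE_1(\rmx)$ that $\lambda_{\min}(\bOmega_t) \ge K_2 t$, because $\bOmega_0 \succeq 0$ only helps. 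Hence
\begin{align*}
    \|\Delta_t\|_2^3 \le (K_2 t)^{-3/2} V_t^{3/2} .
\end{align*}
This suggests setting $\epsilon_t = L_1 L_2 K_2^{-3/2} t^{-3/2}$. Then $\sum_{r \ge s} \epsilon_r \lesssim s^{-1/2}$ by comparison with an integral, with a constant depending only on $(K_2, L_{1:2})$.

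\textbf{Step 4: the remainder.} We set
\begin{align*}
    u_{t+1} = 2\{\ell_{t+1}(\theta_\star) - \ell_{t+1}(\theta_t)\} + \langle g_{t+1}, \bOmega_{t+1}^{-1} g_{t+1}\rangle .
\end{align*}
Summing from $t_0+1$ to $t \le t_{\exit}$, the first part is at most $2K_3 \rmx$ by \eqref{A4-a} on $\scrE_2(\rmx)$. The second part is at most $K_4 p\log t + K_5(\rmx + \log t)$ by \eqref{A4-b} on $\scrE_3(\rmx)$. This gives the stated cumulative bound with $K' = K_4$ and $K'' = 2K_3 + K_5$.

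\textbf{Main obstacle.} The delicate point is Step 3. We must certify that every Hessian entering $\bOmega_t$, including those in the warm-start block, is evaluated inside $\Theta_r$, so that \eqref{A1} applies uniformly over the random trajectory. We also need the index ranges to be consistent: the recursion is for $t < t_{\exit}$, while the sums run up to $t \le t_{\exit}$. This is handled by noting that $u_{t_{\exit}}$ involves only $\theta_{t_{\exit}-1} \in \Theta_r$, and that \eqref{A4-b} is stated up to $t_{\exit}$.
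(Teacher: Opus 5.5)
Your proposal is correct and follows the paper's proof essentially step for step. The shared ingredients are the exact identity of Lemma~\ref{lemma:recursion}, the Taylor expansion of $\ell_{t+1}(\theta_\star)$ around $\theta_t$ combined with the Hessian comparison in \eqref{A2}, and the bound $\lambda_{\min}(\bOmega_t)\ge K_2 t$ from \eqref{A1} and \eqref{A5}. You also make the same choices $\epsilon_t = L_1L_2K_2^{-3/2}t^{-3/2}$ and $u_{t+1} = 2\{\ell_{t+1}(\theta_\star)-\ell_{t+1}(\theta_t)\}+\langle g_{t+1},\bOmega_{t+1}^{-1}g_{t+1}\rangle$, controlled via \eqref{A4}.

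One caveat is common to your argument and the paper's. With $(\theta,\theta')=(\theta_\star,\theta_t)$, the displayed form of \eqref{A2} literally bounds $\nabla^2\ell_{t+1}(\theta_\star)$, not $\bH_{t+1}=\nabla^2\ell_{t+1}(\theta_t)$. So your Step 2, like the paper's, relies on the intended reading of \eqref{A2}, in which the Hessian on the left is taken at the expansion point; that version does hold in the logistic example by self-concordance.
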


The refined recursion reveals two key sources of growth: the superlinear term $\epsilon_t V_t^{3/2}$, and the cumulative increment $U_t = \sum_{s=t_0+1}^t u_s$. A suitable control on the growth of $V_t$ requires balancing these two contributions.

\paragraph*{Controlling the superlinear term}
We first consider the superlinear term. To control the growth of $V_t$, the term $\epsilon_t V_t^{3/2}$ must remain negligible relative to $V_t$. Assumptions \eqref{A1}–\eqref{A2} ensure this by imposing linear growth of the eigenvalues of $\bOmega_t$ and local smoothness of $\ell_t$. These conditions yield $\epsilon_t \asymp t^{-3/2}$, so that the cumulative superlinear effect remains controlled.

The phase transition time $t_0$ plays an important role in this control as well. Beyond ensuring that the iterates enter the local region $\Theta_r$, $t_0$ must be large enough to make $\epsilon_t$ small at the onset of the recursion: when $\epsilon_t \asymp t^{-3/2}$, we have $\sum_{s=t_0}^{t} \epsilon_s \asymp t_0^{-1/2}$, so that a larger $t_0$ directly suppresses the cumulative superlinear effect. This motivates the requirement $t_0 \geq M(p \log p + \rmx)$ in \eqref{A5}, with $M > 0$ depending on the regularity constants.

\paragraph*{Controlling the cumulative increment}
We next consider the cumulative increment $U_t$. Even with $\epsilon_t$ small, $V_t$ may still diverge if $U_t$ grows too quickly: once $U_t$ pushes $V_t$ upward, the superlinear term amplifies this growth and can drive $V_t$ to grow rapidly. In the proof of Lemma \ref{lemma:recursion_processed}, $u_{t+1}$ is explicitly defined by
\begin{align*}
    u_{t+1} = 2 \big( \ell_{t+1}(\theta_{\star}) - \ell_{t+1}(\theta_{t}) \big) + \langle g_{t+1}, \bOmega_{t+1}^{-1} g_{t+1} \rangle.
\end{align*}
Accordingly, \eqref{A4} controls $U_t$ through a uniform bound on the partial sums of $(u_t)$.

\paragraph*{Induction over dyadic blocks}
Lemma \ref{lemma:recursion_processed} alone does not yield the desired bound on $V_t$.  To obtain such a bound, we employ a carefully constructed inductive argument in Proposition \ref{prop:induction}. The strategy is as follows. For each $k \in \bbN$, let $t_k = (2^k t_0) \wedge t_{\exit}$, and partition the time axis $\{ t \in \bbN : t \geq t_0 \}$ into dyadic blocks:
\begin{align*}
    \{ t_0, \ldots, t_1 \}, \quad \{ t_1, ..., t_2 \}, \quad \ldots \quad \{ t_{k_{\ast}-1}, ..., t_{k_{\ast}} \},
\end{align*}
where $k_{\ast} = \inf\{ k \in \bbN: t_{\exit} \leq 2^k t_0 \}$. We assume that $k_{\ast} > 2$ for simplicity; the general case is covered in Proposition \ref{prop:induction}.

The induction proceeds along these blocks. Suppose that $V_{t_k} \leq r(t_k, p, \rmx)$ for some $k < k_{\ast}$, where $r(t, p, \rmx)$ is of order $p \log t + \rmx$. Lemma \ref{lemma:induction_lemma} then ensures that, on $\scrE_{1:4}(\rmx)$, this bound propagates to the next block:
\begin{align*}
    V_{t} \leq r(t, p, \rmx), \quad \forall t \in \{ t_k, ..., t_{k+1} \}.
\end{align*}
Iterating this argument over successive dyadic blocks yields a uniform control of $V_t$ up to the exit time $t_{\exit}$. See Section \ref{sec:induction} for details. Although the above argument is confined to $t \leq t_{\exit}$, the resulting bound in fact implies $\theta_{t_{\exit}} \in \Theta_r$, which contradicts the definition of $t_{\exit}$. Consequently, $t_{\exit} = \infty$, and hence the argument extends to all $t \geq t_0$. This establishes Theorem \ref{thm:rate_conv}.

\subsection{Online Bernstein--von Mises theorem} \label{sec:online_BvM}

We now turn to the analysis of the online BvM theorem. Our goal is to establish the asymptotic equivalence between $\Pi_t$ and $Q_{\BvM, t}$. To this end, we introduce a useful technical bound. For $Q_1 = N(m_1, \bV_1^{-1})$ and $Q_2 = N(m_2, \bV_2^{-1})$ with sufficiently small $\| \bV_2^{-1/2} \bV_1 \bV_2^{-1/2} - \bI_p \|_{2}$, Lemma \ref{lemma:Gaussian_TV} yields
\begin{align} \label{TV_error_normals}
    d_{V}\left( Q_1, Q_2 \right) 
    \lesssim 
    \big\| \bV_1^{1/2} \big( m_1 - m_2 \big) \big\|_2 \vee \big\| \bV_2^{-1/2} \bV_1 \bV_2^{-1/2} - \bI_p \big\|_{\rm F}.
\end{align}
By \eqref{TV_error_normals}, bounding $d_V ( \Pi_t, Q_{\BvM, t} )$ reduces to proving that
\begin{align} \label{claim}
    \big\| \bF_{t, \theta_{\star}}^{1/2} \big( \theta_t - \hat \theta_t^{\ML}  \big) \big\|_{2} \vee
    \big\| \bOmega_{t}^{-1/2} \bF_{t, \theta_{\star}} \bOmega_{t}^{-1/2} - \bI_{p} \big\|_{\rm F}
    = o_P(1).
\end{align}
Propositions \ref{prop:online_location} and \ref{prop:online_precision} address the first and second term, respectively. 

We now outline the key insight underlying \eqref{claim}. Given the rates of convergence of $(\theta_t)$, the second term can be bounded using the smoothness of the map $\theta \mapsto \nabla^2 \ell_t(\theta)$. The main difficulty therefore lies in the first term. Since $\bF_{t, \theta_{\star}}$ is of order $t$, bounding the first term in \eqref{claim} is essentially equivalent to proving
\begin{align} \label{eqn:eff_rate}
    \big\| \theta_t - \hat \theta_t^{\ML} \big\|_{2} = o_{P}(t^{-1/2}).
\end{align}
Although Theorem \ref{thm:rate_conv} and \eqref{A5} imply that both estimators attain (nearly) optimal convergence rates, these rates alone yield at best $\|\theta_t - \hat \theta_t^{\ML}\|_2 = O_P(t^{-1/2})$ up to logarithmic factors, which falls short of \eqref{eqn:eff_rate}. A direct analysis of the error between $\theta_t$ and $\hat \theta_t^{\ML}$ is therefore required.

To obtain a sharp upper bound for the first term in \eqref{claim}, we show that $\theta_t$ and $\hat \theta_t^{\ML}$ are minimizers of closely related optimization problems. From this perspective, the difference between $\theta_t$ and $\hat \theta_t^{\ML}$ can be analyzed via the discrepancy between the objective functions. To formalize this idea, we introduce the following notation.
For $\theta \in \Theta$, $t \in \bbN$ with $t > t_0$, let
\begin{align} \label{def:batch_loss}
    L_{1:t}(\theta) = \sum_{r=1}^{t} \ell_r(\theta), \quad 
    \widetilde L_{t}(\theta) = \widetilde \ell_t(\theta) + \dfrac{1}{2} \big\| \bOmega_{t-1}^{1/2} (\theta - \theta_{t-1}) \big\|_2^2.
\end{align}
By construction, $\hat \theta_t^{\ML}$ and $\theta_t$ are the minimizers of $L_{1:t}$ and $\widetilde L_{t}$, respectively. 

The key step is to show that $\widetilde L_{t}(\theta)$ can be viewed as a perturbed version of $L_{1:t}(\theta)$, from which the closeness of $\hat \theta_t^{\ML}$ and $\theta_t$ follows. 
To make this precise, we define the approximation error between $\ell_t(\cdot)$ and $\widetilde \ell_t(\cdot)$ as
\begin{align} \label{def:error_loss}
    \eta_t(\theta) = \ell_t(\theta) - \widetilde \ell_t(\theta), \quad \theta \in \Theta.
\end{align}
Lemma \ref{lemma:loss_recursion} then gives
\begin{align*}
    L_{1:t}(\theta) - \widetilde L_{t}(\theta)
    =
    \sum_{s=t_0+1}^{t} \big\{ \eta_s(\theta) \big\} + L_{1:t_0}(\theta) - \dfrac{1}{2} \big\| \bOmega_{t_0}^{1/2} \big( \theta - \theta_{t_0} \big)  \big\|_2^2
    + 
    \operatorname{const.}
\end{align*}
This decomposition reveals two sources of discrepancy: the cumulative approximation error $\sum_{s=t_0+1}^{t} \eta_s(\theta)$ and the warm-start discrepancy $L_{1:t_0}(\theta) - \tfrac{1}{2} \| \bOmega_{t_0}^{1/2} (\theta - \theta_{t_0}) \|_2^2$.

Building on this decomposition, we now analyze the discrepancy between $\theta_t$ and $\hat \theta_t^{\ML}$.
Since $\hat \theta_t^{\ML}$ and $\theta_t$ are minimizers of $L_{1:t}$ and $\widetilde L_t$ respectively, we have
\begin{align*}
    \nabla L_{1:t}( \hat \theta_t^{\ML} ) = 0, \quad \nabla \widetilde L_{t}( \theta_t ) = 0,
\end{align*}
which implies that
\begin{align*}
    \nabla &L_{1:t}(\theta_t) - \nabla L_{1:t}(\hat \theta_t^{\ML} )
    = \nabla L_{1:t}(\theta_t) 
    = \nabla L_{1:t}(\theta_t) - \nabla \widetilde L_t(\theta_t) \\
    &= \sum_{s = t_0+1}^{t} \nabla \eta_s(\theta_t) + \nabla L_{1:t_0}(\theta_t) - \bOmega_{t_0} (\theta_t - \theta_{t_0}) \\
    &= \sum_{s = t_0+1}^{t} \nabla \eta_s(\theta_t)
    + \nabla L_{1:t_0}(\theta_t) - \nabla L_{1:t_0}(\hat \theta_{t_0}^{\ML}) 
    - \bOmega_{t_0} (\theta_t - \theta_{t_0}).
\end{align*}
If $\theta_t$, $\hat \theta_t^{\ML}$ and $\hat \theta_{t_0}^{\ML}$ are sufficiently close and the map $\theta \mapsto \nabla^2 L_{1:t}(\theta)$ is smooth, Taylor expansion gives
\begin{align*}
    \nabla L_{1:t}(\theta_t) - \nabla L_{1:t}(\hat \theta_t^{\ML} )
    &\approx 
    \bF_{t, \hat \theta_t^{\ML}} \big( \theta_t - \hat \theta_t^{\ML} \big), \\
    \nabla L_{1:t_0}(\theta_t) - \nabla L_{1:t_0}(\hat \theta_{t_0}^{\ML}) 
    &\approx 
    \bF_{t_0, \hat \theta_{t_0}^{\ML}} \big( \theta_t - \hat \theta_{t_0}^{\ML} \big),
\end{align*}
from which we obtain
\begin{align*}
    \Big\| \bF_{t, \hat \theta_t^{\ML}}^{1/2} (\theta_t - \hat \theta_t^{\ML}) \Big\|_2 
    &\lesssim 
    \Big\| \bF_{t, \hat \theta_t^{\ML}}^{-1/2} \sum_{s = t_0+1}^{t} \nabla \eta_s(\theta_t) \Big\|_2
    +
    \left\| \bF_{t, \hat \theta_t^{\ML}}^{-1/2} \bF_{t_0, \hat \theta_{t_0}^{\ML}} \big( \theta_t - \hat \theta_{t_0}^{\ML} \big) \right\|_2 \\
    &\qquad +
    \left\| \bF_{t, \hat \theta_t^{\ML}}^{-1/2} \bOmega_{t_0} (\theta_t - \theta_{t_0}) \right\|_2.
\end{align*}
Consequently, bounding the right-hand side of the above display is the main challenge in the analysis.

The dominant term in the above display is the first one, which captures the cumulative approximation errors. Recall from \eqref{def:tilde_loss} that $\widetilde \ell_t(\theta)$ is a quadratic approximation of $\ell_t(\theta)$. Accordingly, the gradient of the approximation error $\eta_s(\theta)$ can be written as
\begin{align*} 
    \nabla \eta_s(\theta) = (\overline \bH_s(\theta) -  \bH_s) (\theta - \theta_{s-1}),
\end{align*} 
where $\overline \bH_s(\theta) = \int_{0}^{1} \nabla^2 \ell_s ( \alpha \theta + (1-\alpha) \theta_{s-1} ) \rmd \alpha$. This representation shows that the approximation error is governed by the local variation of the hessian, and the distance between $\theta$ and $\theta_{s-1}$. Hence, if $\theta \mapsto \nabla^2 \ell_s(\theta)$ is locally smooth and $\theta_t$ remains sufficiently close to $(\theta_s)_{s=t_0}^{t-1}$, then the cumulative error term can be controlled. 
By the estimation rates in Theorem \ref{thm:rate_conv}, we have for all $s \in \bbN$ with $t_0 \leq s \leq t-1$
\begin{align*}
    \big\| \theta_t - \theta_{s} \big\|_2 \lesssim \sqrt{\dfrac{p \log s + \rmx}{s}} \quad \text{ on } \scrE_{1:4}(\rmx).
\end{align*}
Combining this with the local Lipschitz condition in \eqref{A3} yields
\begin{align*}
    \big\| \bF_{t, \hat \theta_t^{\ML}}^{1/2} (\theta_t - \hat \theta_t^{\ML}) \big\|_2
    \lesssim
    \epsilon_{\app, t}.
\end{align*}
The bound in \eqref{claim}, however, is stated in terms of $\bF_{t, \theta_{\star}}$ rather than $\bF_{t, \hat \theta_t^{\ML}}$. Under the local Lipschitz condition in \eqref{A3}, $\bF_{t, \theta_{\star}}$ and
$\bF_{t, \hat \theta_t^{\ML}}$ are close whenever $\| \hat \theta_t^{\ML} - \theta_{\star} \|_2$ is sufficiently small. Consequently, we have
\begin{align*}
    \big\| \bF_{t, \theta_{\star}}^{1/2} ( \theta_t - \hat \theta_t^{\ML}) \big\|_2
    \approx
    \big\| \bF_{t, \hat \theta_t^{\ML}}^{1/2} (\theta_t - \hat \theta_t^{\ML}) \big\|_2
    \lesssim
    \epsilon_{\app, t}.
\end{align*}
The following proposition formalizes this result.

\begin{proposition} \label{prop:online_location}
    For given $\rmx, r > 0$, suppose that \eqref{A0}–\eqref{A5} hold.
    Then, there exists a constant $M_1 = M_1(K_6) > 0$ such that, on $\scrE_{1:4}(\rmx)$, for any $t \in \bbN$ with $t > M_1 t_0$,
    \begin{align*}
    \begin{aligned} 
        \big\| \bF_{t, \theta_{\star}}^{1/2} ( \theta_t - \hat \theta_t^{\ML}) \big\|_2
        \leq 
        K \epsilon_{\app, t}.
    \end{aligned}        
    \end{align*}   
    where $K = K(K_{1:5}, L_{1:3}, \overline \lambda_0, r) > 0$.
\end{proposition}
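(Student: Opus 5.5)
We follow the strategy outlined above. Everything takes place on $\scrE_{1:4}(\rmx)$. By Theorem \ref{thm:rate_conv}, $\theta_s\in\Theta_r$ and $\|\Delta_s\|_2\lesssim\sqrt{(p\log s+\rmx)/s}$ for all $s\ge t_0$. By \eqref{A5}, $\hat\theta_t^{\ML},\hat\theta_{t_0}^{\ML},\hat\theta_{t_0}^{\MAP}$ also lie in $\Theta_r$ at the corresponding rates.

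\textbf{Step 1: exact decomposition.} The first step is to establish (the analogue of Lemma \ref{lemma:loss_recursion})
\begin{align*}
\nabla L_{1:t}(\theta) - \nabla \widetilde L_t(\theta) = \sum_{s=t_0+1}^t \nabla\eta_s(\theta) + \nabla L_{1:t_0}(\theta) - \bOmega_{t_0}(\theta-\theta_{t_0}).
\end{align*}
This follows by unrolling the recursion. The update $\bOmega_s=\bOmega_{s-1}+\bH_s$ makes $\widetilde L_s(\theta)=\sum_{r\le s}\widetilde\ell_r(\theta)$ up to the prior quadratic and constants: each minimization of a quadratic plus quadratic reproduces $\tfrac12\|\bOmega_s^{1/2}(\theta-\theta_s)\|^2$ up to constants. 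We then evaluate the identity at $\theta=\theta_t$ and use $\nabla\widetilde L_t(\theta_t)=0$ and $\nabla L_{1:t}(\hat\theta_t^{\ML})=0$.

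The left side equals $\overline\bF\,(\theta_t-\hat\theta_t^{\ML})$, where $\overline\bF=\int_0^1\nabla^2L_{1:t}(\hat\theta_t^{\ML}+\alpha(\theta_t-\hat\theta_t^{\ML}))\,\rmd\alpha$. By \eqref{A1} we have $\overline\bF\succeq K_2 t$. By \eqref{A3-a} together with the rates, $\overline\bF$ is comparable to $\bF_{t,\theta_\star}$ up to a factor $1+o(1)$. Hence
\begin{align*}
\big\|\bF_{t,\theta_\star}^{1/2}(\theta_t-\hat\theta_t^{\ML})\big\|_2 \lesssim t^{-1/2}\Big(\big\|\textstyle\sum_{s}\nabla\eta_s(\theta_t)\big\|_2 + \big\|\nabla L_{1:t_0}(\theta_t)-\bOmega_{t_0}(\theta_t-\theta_{t_0})\big\|_2\Big).
\end{align*}

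\textbf{Step 2: warm-start term.} We write $\nabla L_{1:t_0}(\theta_t)=\nabla L_{1:t_0}(\theta_{t_0})+\overline\bF_{t_0}(\theta_t-\theta_{t_0})$. Since $\theta_{t_0}=\hat\theta_{t_0}^{\MAP}$, we have $\nabla L_{1:t_0}(\theta_{t_0})=-\bOmega_0(\theta_{t_0}-\theta_0)=O(\sqrt p)$, using \eqref{A0} and \eqref{A5}. Also $\bOmega_{t_0}=\bOmega_0+\bF_{t_0,\theta_{t_0}}$. The warm-start term therefore becomes
\begin{align*}
-\bOmega_0(\theta_{t_0}-\theta_0)+\big(\overline\bF_{t_0}-\bF_{t_0,\theta_{t_0}}-\bOmega_0\big)(\theta_t-\theta_{t_0}).
\end{align*}
Its norm is $\lesssim \sqrt p + t_0\|\theta_t-\theta_{t_0}\|_2$, with constants depending on $L_2$ and $\overline\lambda_0$. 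This gives a contribution of order $t_0\sqrt{(p\log t_0+\rmx)/t_0}\,/\sqrt t=\sqrt{t_0(p\log t+\rmx)/t}$. Here the condition $t>M_1t_0$ enters, ensuring the comparison of Fisher matrices at different points through \eqref{A3-b}. Since $t_0\lesssim$ the leading order by \eqref{A5} (it is comparable to $p\log p+\rmx$), this term is dominated by $\epsilon_{\app,t}$, and the prior piece $\sqrt{p/t}$ is negligible as well.

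\textbf{Step 3: cumulative approximation error (main obstacle).} We have $\nabla\eta_s(\theta_t)=(\overline\bH_s(\theta_t)-\bH_s)(\theta_t-\theta_{s-1})$. By \eqref{A3-a}, $\|\overline\bH_s(\theta_t)-\bH_s\|_2\le L_3\|\theta_t-\theta_{s-1}\|_2$, so
\begin{align*}
\Big\|\sum_{s=t_0+1}^t\nabla\eta_s(\theta_t)\Big\|_2\le L_3\sum_{s=t_0+1}^t\|\theta_t-\theta_{s-1}\|_2^2\lesssim\sum_{s=t_0}^{t}\frac{p\log s+\rmx}{s}\lesssim \log(t/t_0)(p\log t+\rmx).
\end{align*}
Dividing by $\sqrt t$ gives exactly $\epsilon_{\app,t}$. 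The delicate point is that this crude triangle-inequality bound must not lose further factors. One also has to ensure that the segments between $\theta_t$ and $\theta_{s-1}$ stay in $\Theta_r$, which holds by convexity of the ball.

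If the Step 1 comparison of $\overline\bF$ with $\bF_{t,\theta_\star}$ turns out to need more care, I would instead pass through $\bF_{t,\hat\theta_t^{\ML}}$ using \eqref{A3-b}. The multiplicative factor there is $1+e^{K_6\|h\|}\le 3$ for small $\|h\|$, which costs only constants.
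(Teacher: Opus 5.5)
Your argument is the paper's in all essentials. It uses the decomposition of Lemma~\ref{lemma:loss_recursion} evaluated at $\theta_t$ together with $\nabla \widetilde L_t(\theta_t)=0=\nabla L_{1:t}(\hat\theta_t^{\ML})$. It bounds the cumulative approximation error by $L_3\sum_{s}\|\theta_t-\theta_{s-1}\|_2^2\lesssim \log(t/t_0)(p\log t+\rmx)$, and it bounds the warm-start contribution crudely by $L_2 t_0$.

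The one step that does not go through as written is in your Step~2. There you expand $\nabla L_{1:t_0}(\theta_t)$ around $\theta_{t_0}=\hat\theta_{t_0}^{\MAP}$ and assert $\nabla L_{1:t_0}(\theta_{t_0})=-\bOmega_0(\theta_{t_0}-\theta_0)=O(\sqrt p)$ ``by \eqref{A0} and \eqref{A5}''. Those assumptions control $\|\bOmega_0\|_2$ and $\|\theta_{t_0}-\theta_\star\|_2$. Nothing in \eqref{A0}--\eqref{A5} controls $\|\theta_0-\theta_\star\|_2$; a condition like $\|\bOmega_0^{1/2}(\theta_\star-\theta_0)\|_2^2\lesssim p$ appears only in the informal discussion of Section~\ref{sec:verification}. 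The only available control is the indirect one implied by \eqref{A5}. Since $\nabla L_{1:t_0}(\hat\theta_{t_0}^{\ML})=0$ and both estimators lie in $\Theta_r$,
\[
\big\|\bOmega_0(\theta_{t_0}-\theta_0)\big\|_2=\big\|\nabla L_{1:t_0}(\hat\theta_{t_0}^{\MAP})-\nabla L_{1:t_0}(\hat\theta_{t_0}^{\ML})\big\|_2\le L_2 t_0\big\|\hat\theta_{t_0}^{\MAP}-\hat\theta_{t_0}^{\ML}\big\|_2\lesssim\sqrt{t_0(p+\log t_0+\rmx)},
\]
which is not $O(\sqrt p)$ in general. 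After the factor $(K_2t)^{-1/2}$, however, it is of the same order $\sqrt{t_0(p\log t+\rmx)/t}$ as your other warm-start piece. So your conclusion survives once this bound replaces the $O(\sqrt p)$ claim. This is why the paper anchors the warm-start gradient at $\hat\theta_{t_0}^{\ML}$ (its term (iii)) and treats $\bOmega_{t_0}(\theta_t-\theta_{t_0})$ separately through the bounds on $V_t$ and $V_{t_0}$ (term (iv)). That way the prior mean $\theta_0$ never enters.

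Two smaller points. First, absorbing $\sqrt{t_0(p\log t+\rmx)/t}$ into $\epsilon_{\app,t}$ needs two things:
\begin{enumerate}
\item $t_0\lesssim p\log t+\rmx$, i.e.\ $t_0$ taken at the threshold in \eqref{A5}. The assumption itself only gives a lower bound; the paper makes the same implicit choice (cf.\ the remark following Theorem~\ref{thm:online_cumulative_error}, where $t_0\vee(p\log t+\rmx)$ appears).
\item $\log(t/t_0)\ge \log M_1\gtrsim 1$. This, rather than any comparison of Fisher matrices, is what $t>M_1t_0$ supplies in your Step~2.
\end{enumerate}
Second, comparing $\overline{\bF}$ with $\bF_{t,\theta_\star}$ directly via \eqref{A3-a} is valid. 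But it needs $L_3\sqrt{(p\log t+\rmx)/t}$ to be small relative to $K_2$, so the threshold would depend on $L_3$. To get $M_1=M_1(K_6)$ as stated, use your fallback through $\bF_{t,\hat\theta_t^{\ML}}$ and \eqref{A3-b}, which is what the paper does (Theorem~\ref{thm:online_cumulative_error}, then the factor-$3$ comparison).
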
 

We next consider the second term in \eqref{claim}. Combining \eqref{A3} with the estimation rates of
$(\theta_s)_{s=t_0}^{t-1}$, we obtain on $\scrE_{1:4}(\rmx)$
\begin{align*}
    &\big\| \bOmega_t^{-1/2} \bF_{t, \theta_{\star}} \bOmega_t^{-1/2} - \bI_p  \big\|_2
    \leq 
    \lambda_{\min}^{-1}(\bOmega_t) \big\| \bF_{t, \theta_{\star}}  - \bOmega_t  \big\|_2 \\
    &\quad \leq 
    \lambda_{\min}^{-1}(\bOmega_t)
    \left[ \big\| \bOmega_0 \big\|_2 
    + \sum_{s=1}^{t_0} \left\| \big( \bH_s(\theta_{t_0}) - \bH_s(\theta_{\star}) \big) \right\|_2
    + \sum_{s=t_0+1}^{t} \left\| \big( \bH_s(\theta_{s-1}) - \bH_s(\theta_{\star}) \big) \right\|_2
    \right] \\
    &\qquad \lesssim
    t^{-1}
    \left[
    1
    + \sum_{s=1}^{t_0} \| \theta_{t_0} - \theta_{\star} \|_2
    + \sum_{s=t_0}^{t-1} \| \theta_{s} - \theta_{\star} \|_2
    \right]
    \lesssim
    \sqrt{ \dfrac{p\log t + \rmx}{t} }.
\end{align*}
The following proposition formalizes this result.

\begin{proposition} \label{prop:online_precision}
    For given $\rmx, r > 0$, suppose that \eqref{A0}–\eqref{A5} hold.
    Then, on $\scrE_{1:4}(\rmx)$, we have
    \begin{align*} 
        \big\| \bOmega_t^{-1/2} \bF_{t, \theta_{\star}} \bOmega_t^{-1/2} - \bI_p  \big\|_2 \vee
        \big\| \bF_{t, \theta_{\star}}^{-1/2} \bOmega_t \bF_{t, \theta_{\star}}^{-1/2} - \bI_p  \big\|_2
        \leq 
        K \tilde \epsilon_{\app, t}, \quad \forall t \in \bbN \text{ with } t > t_0,
    \end{align*}   
    where $K = K(K_{2:5}, L_3, \overline \lambda_0) > 0$ and
    \begin{align*}
        \tilde \epsilon_{\app, t} = \sqrt{ \dfrac{p\log t + \rmx}{t} }.
    \end{align*}
\end{proposition}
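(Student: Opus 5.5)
We prove Proposition \ref{prop:online_precision} by comparing $\bOmega_t$ with $\bF_{t,\theta_\star}$ termwise. First we bound $\|\bF_{t,\theta_\star}-\bOmega_t\|_2$, then we convert this into the two normalized forms using lower bounds on $\lambda_{\min}(\bOmega_t)$ and $\lambda_{\min}(\bF_{t,\theta_\star})$.

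\textbf{Step 1: unrolling the recursion.} For $t>t_0$, the update rule in Algorithm \eqref{Alg} gives
\begin{align*}
    \bOmega_t = \bOmega_0 + \sum_{s=1}^{t_0} \nabla^2\ell_s(\theta_{t_0}) + \sum_{s=t_0+1}^{t} \nabla^2 \ell_s(\theta_{s-1}),
\end{align*}
using $\theta_{t_0}=\hat\theta^{\MAP}_{t_0}$. Hence
\begin{align*}
    \bOmega_t - \bF_{t,\theta_\star} = \bOmega_0 + \sum_{s=1}^{t_0}\{\nabla^2\ell_s(\theta_{t_0})-\nabla^2\ell_s(\theta_\star)\} + \sum_{s=t_0+1}^{t}\{\nabla^2\ell_s(\theta_{s-1})-\nabla^2\ell_s(\theta_\star)\}.
\end{align*}

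\textbf{Step 2: bounding the difference.} On $\scrE_{1:4}(\rmx)$, Theorem \ref{thm:rate_conv} gives $\theta_s\in\Theta_r$ and $\|\theta_s-\theta_\star\|_2\le K\sqrt{(p\log s+\rmx)/s}$ for all $s\ge t_0$. Applying \eqref{A3-a} to each summand gives
\begin{align*}
    \|\bOmega_t-\bF_{t,\theta_\star}\|_2 \le \overline\lambda_0 + L_3 t_0\|\theta_{t_0}-\theta_\star\|_2 + L_3\sum_{s=t_0}^{t-1}\|\theta_s-\theta_\star\|_2.
\end{align*}
The middle term is at most $L_3 K\sqrt{t_0(p\log t_0+\rmx)}\le L_3K\sqrt{t(p\log t+\rmx)}$. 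For the last sum, bound $\log s\le \log t$ and use $\sum_{s\le t}s^{-1/2}\le 2\sqrt t$; this gives $2L_3K\sqrt{t(p\log t+\rmx)}$. Altogether,
\begin{align*}
    \|\bOmega_t-\bF_{t,\theta_\star}\|_2\lesssim \overline\lambda_0+\sqrt{t(p\log t+\rmx)}.
\end{align*}

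\textbf{Step 3: the eigenvalue lower bounds.} We need $\lambda_{\min}(\bOmega_t)\gtrsim t$ and $\lambda_{\min}(\bF_{t,\theta_\star})\gtrsim t$. Both follow from \eqref{A1} on $\scrE_1(\rmx)$, since $t>t_0\ge K_1(p+\rmx)$:
\begin{itemize}
    \item For $\bOmega_t$, apply \eqref{A1} with $\overline\theta_s=\theta_{t_0}$ for $s\le t_0$ and $\overline\theta_s=\theta_{s-1}$ for $s>t_0$. All these points lie in $\Theta_r$ by Theorem \ref{thm:rate_conv}, and $\bOmega_0\succeq0$, so $\lambda_{\min}(\bOmega_t)\ge K_2t$.
    \item For $\bF_{t,\theta_\star}$, apply \eqref{A1} with $\overline\theta_s=\theta_\star$ for all $s$, which gives $\lambda_{\min}(\bF_{t,\theta_\star})\ge K_2t$.
\end{itemize}

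\textbf{Step 4: conclusion.} Write $\bOmega_t^{-1/2}\bF_{t,\theta_\star}\bOmega_t^{-1/2}-\bI_p=\bOmega_t^{-1/2}(\bF_{t,\theta_\star}-\bOmega_t)\bOmega_t^{-1/2}$. Its operator norm is at most $\lambda_{\min}^{-1}(\bOmega_t)\|\bF_{t,\theta_\star}-\bOmega_t\|_2$. The same argument applies to the symmetric term with $\bF_{t,\theta_\star}$ in place of $\bOmega_t$. Dividing the bound from Step 2 by $K_2t$ yields
\begin{align*}
    K\left(\frac{\overline\lambda_0}{t}+\sqrt{\frac{p\log t+\rmx}{t}}\right)\le K'\tilde\epsilon_{\app,t},
\end{align*}
since $1/t\le t^{-1/2}\le \tilde\epsilon_{\app,t}$ whenever $\rmx\ge$ const; otherwise $\overline\lambda_0$ is absorbed into the constant.

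The main point requiring care is the middle term $L_3 t_0\|\theta_{t_0}-\theta_\star\|_2$. It scales like $\sqrt{t_0}$, which is harmless only because $t_0<t$. Equally important is that Theorem \ref{thm:rate_conv} keeps all iterates in $\Theta_r$, so that \eqref{A3-a} and \eqref{A1} apply along the whole trajectory. The constant depends on $K_{2:5}$ through Theorem \ref{thm:rate_conv} and on $L_3$ and $\overline\lambda_0$, as stated.
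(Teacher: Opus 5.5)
Your proposal is correct and follows essentially the same argument as the paper: you unroll $\bOmega_t - \bF_{t,\theta_\star}$ into the prior, warm-start, and online Hessian-difference terms, bound each with \eqref{A3-a} and the rates from Theorem~\ref{thm:rate_conv}, and divide by $\lambda_{\min} \ge K_2 t$ obtained from \eqref{A1}. The differences are cosmetic: the paper bounds $\|\theta_{t_0}-\theta_\star\|_2$ via the MAP rate in \eqref{A5} rather than Theorem~\ref{thm:rate_conv}, and your hedge on the $\overline\lambda_0/t$ term is unnecessary, since $t > t_0 \ge 3$ gives $p\log t + \rmx > 1$ and hence $1/t \le \tilde\epsilon_{\app,t}$ automatically.
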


We now establish the online BvM theorem.
By Propositions \ref{prop:online_location} and \ref{prop:online_precision}, on $\scrE_{1:4}(\rmx)$, we have 
\begin{align*}
    \big\| \bF_{t, \theta_{\star}}^{1/2} \big( \theta_t - \hat \theta_t^{\ML}  \big) \big\|_{2} 
    = O(\epsilon_{\app, t}), \quad 
    \big\| \bOmega_{t}^{-1/2} \bF_{t, \theta_{\star}} \bOmega_{t}^{-1/2} - \bI_{p} \big\|_{2}
    = O(\tilde \epsilon_{\app, t}).
\end{align*}
Moreover, we have
\begin{align*}
    \big\| \bOmega_{t}^{-1/2} \bF_{t, \theta_{\star}} \bOmega_{t}^{-1/2} - \bI_{p} \big\|_{\rm F}
    \leq 
    \sqrt{p} \big\| \bOmega_{t}^{-1/2} \bF_{t, \theta_{\star}} \bOmega_{t}^{-1/2} - \bI_{p} \big\|_{2}
    \lesssim
    \sqrt{ \dfrac{p(p\log t + \rmx)}{t} }
    \lesssim
    \epsilon_{\app, t}.
\end{align*}
Combining these bounds with \eqref{TV_error_normals}, we obtain on $\scrE_{1:4}(\rmx)$
\begin{align*}
    d_{V} \big( Q_{\BvM, t}, \Pi_t \big)
    \lesssim
    \big\| \bF_{t, \theta_{\star}}^{1/2} \big( \theta_t - \hat \theta_t^{\ML}  \big) \big\|_{2} 
    +
    \big\| \bOmega_{t}^{-1/2} \bF_{t, \theta_{\star}} \bOmega_{t}^{-1/2} - \bI_{p} \big\|_{\rm F}
    \lesssim
    \epsilon_{\app, t}.
\end{align*}
Together with the classical bound on $d_{V} ( Q_{\BvM, t}, \Pi(\cdot \mid \bD_{1:t}) )$, this leads to the online BvM theorem.

\section{Proofs for Section \ref{sec:theory}}

\subsection{Rates of convergence}

\begin{lemma}[Recursion] \label{lemma:recursion}
    Suppose that \eqref{A0} holds. Then, we have
    \begin{align*}
        V_t 
        = V_{t-1} 
            + \langle \bH_t, \Delta_{t-1}^{\otimes 2} \rangle
            -2 \langle g_t, \Delta_{t-1} \rangle
            + \langle g_t, \bOmega_{t}^{-1} g_t \rangle
    \end{align*}    
    for all $t \in \bbN$ with $t > t_{0}$.
\end{lemma}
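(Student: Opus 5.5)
We verify the identity by direct algebra from the online update in Algorithm \eqref{Alg}; no probabilistic or regularity input beyond \eqref{A0} is needed. The only role of \eqref{A0} is to make $g_t$ and $\bH_t$ well defined. We also use that $\bOmega_t$ is symmetric and invertible, which holds for $t > t_0$ because $\bOmega_{t_0} \in \symmPD$ and each added $\bH_s$ is symmetric.

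Fix $t > t_0$. The update $\theta_t = \theta_{t-1} - \bOmega_t^{-1} g_t$ gives $\Delta_t = \Delta_{t-1} - \bOmega_t^{-1} g_t$. Expanding the quadratic form, and using the symmetry of $\bOmega_t$ to combine the two cross terms, we get
\begin{align*}
    V_t = \langle \bOmega_t, \Delta_t^{\otimes 2} \rangle
    = \langle \bOmega_t, \Delta_{t-1}^{\otimes 2} \rangle
    - 2 \langle \bOmega_t \Delta_{t-1}, \bOmega_t^{-1} g_t \rangle
    + \langle \bOmega_t \bOmega_t^{-1} g_t, \bOmega_t^{-1} g_t \rangle .
\end{align*}
The middle term simplifies to $-2\langle g_t, \Delta_{t-1}\rangle$, again by symmetry of $\bOmega_t$. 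The last term simplifies to $\langle g_t, \bOmega_t^{-1} g_t\rangle$.

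Finally, we substitute $\bOmega_t = \bOmega_{t-1} + \bH_t$ into the first term:
\begin{align*}
    \langle \bOmega_t, \Delta_{t-1}^{\otimes 2} \rangle
    = \langle \bOmega_{t-1}, \Delta_{t-1}^{\otimes 2} \rangle + \langle \bH_t, \Delta_{t-1}^{\otimes 2} \rangle
    = V_{t-1} + \langle \bH_t, \Delta_{t-1}^{\otimes 2} \rangle .
\end{align*}
Combining the three pieces yields the claimed identity.

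There is no real obstacle here; the only point requiring care is the boundary case $t = t_0 + 1$. In that case $V_{t-1} = V_{t_0}$ must be defined through $\bOmega_{t_0}$ and $\theta_{t_0}$ from the warm-start phase. This is consistent with the definition \eqref{def:V_t_Delta_t}, and the recursion uses only the online update at time $t$, so the same computation applies.
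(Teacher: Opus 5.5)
Your proof is correct and matches the paper's argument: expand $V_t$ using $\Delta_t = \Delta_{t-1} - \bOmega_t^{-1} g_t$ and the symmetry of $\bOmega_t$, then split $\langle \bOmega_t, \Delta_{t-1}^{\otimes 2}\rangle$ via $\bOmega_t = \bOmega_{t-1} + \bH_t$. (Minor point: invertibility of $\bOmega_t$ comes from $\bH_s \in \symmPSD$, not mere symmetry, but this is already built into the algorithm.)
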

\begin{proof}
    Recall that $V_t = \| \bOmega_t^{1/2} ( \theta_t - \theta_{\star} ) \|_2^2$. 
    By the update formula $\theta_t = \theta_{t-1} - \bOmega_{t}^{-1} g_t$, we have
    \begin{align*}
        V_t 
        &= \big\| \bOmega_t^{1/2} ( \theta_t - \theta_{\star} ) \big\|_2^2 
        = \big\| \bOmega_t^{1/2} ( \theta_{t-1} - \bOmega_{t}^{-1} g_t - \theta_{\star} ) \big\|_2^2 \\
        &= 
        \big\| \bOmega_t^{1/2} ( \theta_{t-1} - \theta_{\star} ) \big\|_2^2
        +
        \big\| \bOmega_t^{1/2} \bOmega_t^{-1} g_t \big\|_2^2
        -
        2\langle \bOmega_t^{1/2} ( \theta_{t-1} - \theta_{\star} ) , \bOmega_t^{1/2} \bOmega_t^{-1} g_t \rangle \\
        &=
        \big\| \bOmega_t^{1/2} ( \theta_{t-1} - \theta_{\star} ) \big\|_2^2
        +
        \big\| \bOmega_t^{-1/2} g_t \big\|_2^2
        -
        2\langle \theta_{t-1} - \theta_{\star} , g_t \rangle \\
        &=
        V_{t-1}
        +
        \langle \bH_t, \Delta_{t-1}^{\otimes 2} \rangle
        +
        \langle g_t, \bOmega_t^{-1} g_t \rangle
        -
        2\langle g_t, \Delta_{t-1} \rangle,
    \end{align*}
    where the last equality holds by $\bOmega_t = \bOmega_{t-1} + \bH_t$.
    This completes the proof.
\end{proof}

\begin{proof}[Proof of Lemma \ref{lemma:recursion_processed}]
    Let $\rmx > 0$.
    In this proof, we will work on the event $\scrE_{1:4}(\rmx)$ without explicitly mentioning it. 
    We also assume that $M = M( K_{1:5}, L_{1:2}, r)$ specified in \eqref{A5} is sufficiently large.
    For simplicity in notations, we denote in this proof, for any $t \in \bbN$ with $t_0 \leq t \leq t_{\exit}$,
    \begin{align*}
        \overline \bH_t = \nabla^2 \ell_t( \overline \theta_{t-1} ),
    \end{align*}
    where $\overline \theta_{t-1} \in \Theta_r$ satisfies 
    \begin{align} \label{eqn:4_1}
        \ell_t(\theta_{\star})
        =
        \ell_t(\theta_{t-1})
        + \langle \nabla \ell_t(\theta_{t-1}), \theta_{\star}- \theta_{t-1} \rangle
        + \dfrac{1}{2} \langle \nabla^2 \ell_t ( \overline \theta_{t-1} ), (\theta_{\star}- \theta_{t-1})^{\otimes 2} \rangle.
    \end{align}    
    By Taylor's theorem, such $\overline \theta_{t-1}$ exists because $\Theta_r$ is convex and $\theta_{t-1}, \theta_{\star} \in \Theta_r$ for any $t \leq t_{\exit}$.
    Recall that $\bOmega_{t_0} = \bOmega_0 + \sum_{s=1}^{t_0} \nabla^2 \ell_s (\hat \theta_{t_0}^{\MAP})$.
    Since $t_0 \geq K_1 (p + \rmx)$ by \eqref{A5}, we have for any $t \in \bbN$ with $t_0 \leq t \leq t_{\exit}$
    \begin{align}
    \begin{aligned} \label{eqn:4_2}
        \lambda_{\min} \big( \bOmega_t \big)
        &\geq 
        \lambda_{\min} \big( \bOmega_0 \big)
        +
        \lambda_{\min} \left( \sum_{s=1}^{t_0} \nabla^2 \ell_s (\hat \theta_{t_0}^{\MAP}) + \sum_{s= t_0 + 1}^{t} \nabla^2 \ell_s (\theta_{s-1}) \right) \\
        &\geq 
        \lambda_{\min} \big( \bOmega_0 \big)
        +
        K_2 t
        \geq 
        K_2 t,
    \end{aligned}        
    \end{align}
    where the second inequality holds by \eqref{A1}, and $\hat \theta_{t_0}^{\MAP} \in \Theta_r$ by \eqref{A5} and $\theta_s \in \Theta_r$ for any $t_0 \leq s < t_{\exit}$.
    Recall that for any $t \in \bbN$ with $t_0 \leq t < t_{\exit}$
    \begin{align}
    \begin{aligned} \label{eqn:4_3}
        \langle \bH_{t+1}, \Delta_{t}^{\otimes 2} \rangle
        \overset{\eqref{A2}}&{\leq}
        \big( 1 + \phi_{t+1}(\theta_{\star}, \theta_{t})  \big) \langle \overline \bH_{t+1}, \Delta_{t}^{\otimes 2} \rangle \\
        \langle \overline \bH_{t+1}, \Delta_{t}^{\otimes 2} \rangle 
        \overset{\eqref{eqn:4_1}}&{=}
        \Big\{ 2\big( \ell_{t+1}(\theta_{\star}) - \ell_{t+1}(\theta_{t}) \big) 
        + 2\langle g_{t+1}, \Delta_t \rangle \Big\}, \\
        \phi_{t+1}(\theta_{\star}, \theta_{t})
        \overset{\eqref{A2}}&{\leq}
        L_1 \| \Delta_t \|_2 
        \leq L_1 \lambda_{\min}^{-1/2}(\bOmega_{t}) V_{t}^{1/2}
        \overset{\eqref{eqn:4_2}}{\leq} (L_1 K_2^{-1/2} t^{-1/2}) V_{t}^{1/2}.
    \end{aligned}        
    \end{align}    
    Let $t \in \bbN$ with $t_0 \leq t < t_{\exit}$.
    Let $D_{t+1} = \ell_{t+1}(\theta_{\star}) - \ell_{t+1}(\theta_{t})$ in this proof.
    By Lemma \ref{lemma:recursion}, we have
    \begin{align*}
        V_{t+1}
        &=
        V_{t}
        +
        \langle \bH_{t+1}, \Delta_{t}^{\otimes 2} \rangle
        +
        \langle g_{t+1}, \bOmega_{t+1}^{-1} g_{t+1} \rangle
        -
        2\langle g_{t+1}, \Delta_{t} \rangle \\
        \overset{ \eqref{eqn:4_3} }&{\leq}
        V_{t}
        +
        \big( 1 + \phi_{t+1}(\theta_{\star}, \theta_{t}) \big) \langle \overline \bH_{t+1}, \Delta_{t}^{\otimes 2} \rangle
        +
        \langle g_{t+1}, \bOmega_{t+1}^{-1} g_{t+1} \rangle
        -
        2\langle g_{t+1}, \Delta_{t} \rangle \\
        \overset{ \eqref{eqn:4_3} }&{=}
        V_{t}
        +
        \phi_{t+1}(\theta_{\star}, \theta_{t}) \langle \overline \bH_{t+1}, \Delta_{t}^{\otimes 2} \rangle
        +
        2D_{t+1} 
        +
        \langle g_{t+1}, \bOmega_{t+1}^{-1} g_{t+1} \rangle \\
        &=
        V_{t}
        +
        \phi_{t+1}(\theta_{\star}, \theta_{t}) \langle \bOmega_t^{-1/2} \overline \bH_{t+1} \bOmega_t^{-1/2}, (\bOmega_t^{1/2} \Delta_{t})^{\otimes 2} \rangle
        +
        2D_{t+1} 
        +
        \langle g_{t+1}, \bOmega_{t+1}^{-1} g_{t+1} \rangle \\
        &\leq 
        V_{t}
        +
        \phi_{t+1}(\theta_{\star}, \theta_{t}) \big\| \bOmega_t^{-1/2} \overline \bH_{t+1} \bOmega_t^{-1/2} \big\|_2
        V_t
        +
        2D_{t+1} 
        +
        \langle g_{t+1}, \bOmega_{t+1}^{-1} g_{t+1} \rangle \\
        &\leq 
        V_{t}
        +
        \big\{ (L_1 K_2^{-1/2} t^{-1/2}) V_{t}^{1/2} \big\} \big\{ L_2 (K_2 t)^{-1} \big\}
        V_t
        +
        2D_{t+1} 
        +
        \langle g_{t+1}, \bOmega_{t+1}^{-1} g_{t+1} \rangle \\
        &= 
        V_{t}
        +
        \big( L_1 L_2 K_2^{-3/2} \big) t^{-3/2} V_t^{3/2}
        +
        2D_{t+1} 
        +
        \langle g_{t+1}, \bOmega_{t+1}^{-1} g_{t+1} \rangle \\
        &= 
        V_{t}
        +
        \epsilon_{t} V_t^{3/2}
        +
        u_{t+1},
    \end{align*}
    where the last inequality holds by \eqref{A2}, \eqref{eqn:4_2} and \eqref{eqn:4_3}, and
    \begin{align*}
        \epsilon_{t} = \big( L_1 L_2 K_2^{-3/2} \big) t^{-3/2}, \quad 
        u_{t+1} = 2D_{t+1} + \langle g_{t+1}, \bOmega_{t+1}^{-1} g_{t+1} \rangle.
    \end{align*}
    For any $s, t \in \bbN$ with $t_0 \leq s < t < t_{\exit}$, we have
    \begin{align*}
        \sum_{r = s}^{t} \epsilon_r 
        &= \big( L_1 L_2 K_2^{-3/2} \big) \sum_{r = s}^{t} r^{-3/2}
        \leq \big( L_1 L_2 K_2^{-3/2} \big) \sum_{r = s}^{\infty} r^{-3/2}
        \leq \big( c_1 L_1 L_2 K_2^{-3/2} \big) s^{-1/2} \\
        &= c_2 s^{-1/2}
    \end{align*}
    where $c_1 > 0$ is a universal constant and $c_2 = c_2(L_1, L_2, K_2) > 0$.
    Also, for any $t \in \bbN$ with $t_0 < t \leq t_{\exit}$, we have
    \begin{align*}
        \sum_{r = t_0 + 1}^{t} u_r 
        &\leq \sum_{r = t_0+1}^{t} 2D_{r} + \sum_{r = t_0+1}^{t} \langle g_{r}, \bOmega_{r}^{-1} g_{r} \rangle \\
        \overset{\eqref{A4}}&{\leq}
        2K_3 \rmx + K_4 p \log t + K_5 ( \rmx + \log t ) 
        =
        K_4 p \log t + (2K_3 + K_5) ( \rmx + \log t ).
    \end{align*}
    By taking $K = c_2$, $K' = K_4$ and $K'' = 2K_3 + K_5$, we complete the proof.
\end{proof}

\begin{proof}[Proof of Theorem \ref{thm:rate_conv}]
    Let $\rmx > 0$.
    In this proof, we will work on the event $\scrE_{1:4}(\rmx)$ without explicitly mentioning it. 
    We also assume that $M = M( K_{1:5}, L_{1:2}, r)$ specified in \eqref{A5} is sufficiently large.
    
    First, we will show that $t_{\exit} = \infty$ by contradiction, where $t_{\exit}$ is specified in \eqref{def:exit_time}.
    Suppose that $t_{\exit} < \infty$.
    By the assumptions, we can apply Lemma \ref{lemma:recursion_processed}, which implies that on $\scrE_{1:4}(\rmx)$ 
    \begin{align*}
        V_{t+1} \leq V_{t} + \epsilon_{t} V_{t}^{3/2} + u_{t+1}, \quad 
        \forall t \in \bbN \text{ with } t_0 \leq t < t_{\exit},
    \end{align*}
    for some non-negative sequence $(\epsilon_t)_{t \geq t_0}$ and real-valued sequence $(u_t)_{t > t_0}$ satisfying
    \begin{align*} 
        \sum_{r = s}^{t} \epsilon_r \leq c_1 s^{-1/2}, \quad
        \sum_{r = t_0+1}^{t} u_r \leq c_2 p \log t + \widetilde c_2 \big( \rmx + \log t \big), \quad
        \forall s, t \in \bbN \text{ with } t_0 \leq s < t \leq t_{\exit},
    \end{align*}
    where $c_1 = c_1(K_2, L_{1:2}) > 0$, $c_2 = c_2(K_{4}) > 0$ and $\widetilde c_2 = \widetilde c_2(K_{3}, K_5) > 0$.
    Also, by \eqref{A5}, we have
    \begin{align*}
        V_{t_0} \leq c_3(p + \rmx),
    \end{align*}
    where $c_3 > 0$ is a universal constant. By the last three displays, all conditions in Proposition \ref{prop:induction} are satisfied with
    \begin{align*}
        X_t = V_t, \quad a_t = \epsilon_t, \quad b_t = u_t, \quad \tau_0 = t_0, \quad \tau_{\ast} = t_{\exit}, \quad (D_0, D_1, D_2, D_3) = (c_3, c_1, c_2, \widetilde c_2).
    \end{align*}    
    Hence, Proposition \ref{prop:induction} implies that
    \begin{align*}
        V_t \leq c_4 \left( p \log t + \rmx  \right), \quad 
        \forall t \in \bbN \text{ with } t_0 \leq t \leq t_{\exit},
    \end{align*}
    where $c_4 = c_4(c_2, \widetilde c_2, c_3) = c_4(K_{3:5}) > 0$.
    Also,
    \begin{align*}
        \lambda_{\min} (\bOmega_t) \geq K_2 t, \quad \forall t \in \bbN \text{ with } t_0 \leq t \leq t_{\exit},
    \end{align*}    
    where the inequality holds by \eqref{A1} and \eqref{A5}; see \eqref{eqn:4_2} for details. It follows that
    \begin{align*}
        \| \theta_t - \theta_{\star} \|_2^2 
        \leq (K_2 t)^{-1}  c_4 \left( p \log t + \rmx  \right)
        = (c_4 K_2^{-1}) \dfrac{p \log t + \rmx}{t}, \quad 
        \forall t \in \bbN \text{ with } t_0 \leq t \leq t_{\exit}.
    \end{align*}
    Since $t_0 \geq 3$ by large enough constant $M$, the map $t \mapsto t^{-1}\log t$ is non-increasing function for any $t \geq t_0$.
    Then, for $t \in \bbN$ with $t_0 \leq t \leq t_{\exit}$, we have
    \begin{align*}
        \| \theta_t - \theta_{\star} \|_2^2 
        \leq 
        (c_4 K_2^{-1}) \dfrac{p \log t + \rmx}{t}
        \leq 
        (c_4 K_2^{-1}) \dfrac{p \log t_0 + \rmx}{t_0}
        \leq 
        r^2,
    \end{align*}
    where the last inequality holds by a large enough constant $M = M(c_4, K_2) = M(K_{2:5}) > 0$. This contradicts $\theta_{t_\exit} \notin \Theta_r$. Hence, we have $t_{\exit} = \infty$ and $\theta_t \in \Theta_r$ for any $t \geq t_0$.
\end{proof}

\subsection{Online BvM}

\begin{lemma} \label{lemma:loss_recursion}
For a given $\rmx > 0$, suppose that \eqref{A0}–\eqref{A5} hold.
Then, on $\scrE_{1:4}(\rmx)$, we have
\begin{align} \label{eqn:5_first_assertion}
    L_{1:t}(\theta) - \widetilde L_t(\theta) 
    = \ell_t(\theta) - \widetilde \ell_t(\theta) + L_{1:t-1}(\theta) - \widetilde L_{t-1}(\theta) + C,
    \quad \forall t \in \bbN \text{ with } t > t_0
\end{align}
where $C$ does not depend on $\theta$.
Consequently, on $\scrE_{1:4}(\rmx)$, we have, for any $t \in \bbN$ with $t > t_0$,
\begin{align*}
    L_{1:t}(\theta) - \widetilde L_t(\theta) 
    = \sum_{s = t_0+1}^{t} \big\{ \ell_s(\theta) - \widetilde \ell_s(\theta) \big\} + L_{1:t_0}(\theta) - \dfrac{1}{2} \big\| \bOmega_{t_0}^{1/2} (\theta - \theta_{t_0}) \big\|_2^2 + K,
\end{align*}
where $K$ does not depend on $\theta$.
\end{lemma}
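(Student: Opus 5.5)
The first assertion is purely algebraic; no probabilistic input is needed beyond the definitions. The key step is to show that $\widetilde L_{t}$ differs from $\widetilde\ell_t + \widetilde L_{t-1}$ only by a constant in $\theta$. By definition,
\begin{align*}
    \widetilde L_t(\theta) = \widetilde \ell_t(\theta) + \dfrac12 \big\| \bOmega_{t-1}^{1/2}(\theta - \theta_{t-1}) \big\|_2^2 .
\end{align*}
So it suffices to show that, for $t-1 > t_0$, the quadratic $q_{t-1}(\theta) := \tfrac12 \| \bOmega_{t-1}^{1/2}(\theta - \theta_{t-1}) \|_2^2$ equals $\widetilde L_{t-1}(\theta)$ plus a constant. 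For $t-1 = t_0$, the corresponding statement is instead the definition of the base case.

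Fix $s > t_0$. The function $\widetilde L_s$ is a quadratic in $\theta$. Its Hessian is $\nabla^2 \ell_s(\theta_{s-1}) + \bOmega_{s-1} = \bH_s + \bOmega_{s-1} = \bOmega_s$ by \eqref{Alg}. Its gradient at $\theta$ is $g_s + \bOmega_s(\theta - \theta_{s-1})$, which vanishes exactly at $\theta = \theta_{s-1} - \bOmega_s^{-1} g_s = \theta_s$. Here $\bOmega_s \succ 0$ because $\bOmega_{s-1}\succ0$ inductively (with $\bOmega_0\succ0$ by \eqref{A0}) and $\bH_s\succeq0$. A quadratic is determined up to an additive constant by its Hessian and its minimizer, so
\begin{align*}
    \widetilde L_s(\theta) = \tfrac12 \big\| \bOmega_s^{1/2}(\theta - \theta_s) \big\|_2^2 + c_s
\end{align*}
with $c_s$ free of $\theta$ (an explicit value can be obtained by completing the square). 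Applying this with $s = t-1 > t_0$ gives
\begin{align*}
    L_{1:t} - \widetilde L_t = \ell_t - \widetilde\ell_t + L_{1:t-1} - \widetilde L_{t-1} + c_{t-1},
\end{align*}
which is \eqref{eqn:5_first_assertion}.

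At the boundary step $t = t_0+1$, $\widetilde L_{t_0}$ is not defined by \eqref{def:batch_loss}. There I read the statement with the convention $\widetilde L_{t_0}(\theta) := \tfrac12 \| \bOmega_{t_0}^{1/2}(\theta - \theta_{t_0}) \|_2^2$, which is exactly the term that appears in the second display. Under this convention, $L_{1:t_0+1} - \widetilde L_{t_0+1} = \ell_{t_0+1} - \widetilde\ell_{t_0+1} + L_{1:t_0} - \widetilde L_{t_0}$ holds with constant zero.

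The consequence then follows by telescoping \eqref{eqn:5_first_assertion} from $s = t_0+1$ up to $t$. Summing the constants $c_{t_0+1},\dots,c_{t-1}$ gives $K$, and $K$ is independent of $\theta$.

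The only point needing care is the well-definedness of the objects involved: positive definiteness of each $\bOmega_s$, so that $\theta_s$ is the unique minimizer and the completed square is valid. This comes from $\bOmega_0 \succ 0$ and $\bH_s \succeq 0$ as stated in \eqref{Alg}. Strictly speaking, restricting to the event $\scrE_{1:4}(\rmx)$ is not needed, since the argument is deterministic. I expect no genuine obstacle beyond this bookkeeping of the base case.
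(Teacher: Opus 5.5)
Your proposal is correct and follows the paper's proof. The paper likewise establishes $\widetilde L_t(\theta) = \tfrac12\|\bOmega_t^{1/2}(\theta-\theta_t)\|_2^2 + c_0(t)$ by checking $\nabla \widetilde L_t(\theta) = \bOmega_t(\theta-\theta_t)$, shifts the index by one and telescopes; your explicit convention for $\widetilde L_{t_0}$ treats the boundary step $t=t_0+1$ more carefully than the paper, which applies that identity at $t_0$ without comment. The only quibble is your claim that the event is superfluous: invertibility of $\bOmega_{t_0} = \bOmega_0 + \sum_{s\le t_0}\nabla^2\ell_s(\hat\theta_{t_0}^{\MAP})$ is not implied by \eqref{A0} and the PSD property of $\bH_s$ for $s>t_0$, because the warm-start Hessians are not assumed PSD, whereas on $\scrE_{1:4}(\rmx)$ it follows from \eqref{A1} and \eqref{A5}.
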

\begin{proof}
    Let $\rmx > 0$.
    In this proof, we will work on the event $\scrE_{1:4}(\rmx)$ without explicitly mentioning it. 
    Let $t \in \bbN$ with $t > t_0$.
    First, we will prove that
    \begin{align} \label{eqn:5_1} 
        \dfrac{1}{2} \big\| \bOmega_{t}^{1/2} (\theta - \theta_{t}) \big\|_2^2
        = 
        \widetilde L_t (\theta) + c_0,
    \end{align}
    where $c_0 = c_0(t)$ does not depend on $\theta$ (but may depend on $t$). 
    Since both sides in \eqref{eqn:5_1} are differentiable on the connected set $\Theta$, we only need to check that
    \begin{align*}
        \bOmega_{t} (\theta - \theta_{t})
        =
        \nabla \widetilde L_t (\theta), \quad \forall \theta \in \Theta.
    \end{align*}    
    Let $\theta \in \Theta$. 
    Note that
    \begin{align*}
        \nabla \widetilde L_t (\theta)
        &=
        \nabla \widetilde \ell_t (\theta) + \bOmega_{t-1} (\theta - \theta_{t-1})
        \overset{\eqref{def:tilde_loss}}{=}
        g_t + \bH_{t} (\theta - \theta_{t-1}) + \bOmega_{t-1} (\theta - \theta_{t-1}) \\
        &= 
        g_t + \bOmega_{t} (\theta - \theta_{t-1}) 
        =
        g_t + \bOmega_{t} (\theta - \theta_t + \theta_t - \theta_{t-1}) \\
        &=
        g_t + \bOmega_{t} (\theta - \theta_t) - \bOmega_{t} \bOmega_{t}^{-1} g_t
        =
        g_t + \bOmega_{t} (\theta - \theta_t) - g_t
        = 
        \bOmega_{t} (\theta - \theta_t),
    \end{align*}
    which completes the proof of \eqref{eqn:5_1}.
    Since it is easy to see that
    \begin{align*}
        L_{1:t}(\theta) = \ell_t(\theta) + L_{1:t-1}(\theta),
    \end{align*}
    we complete the proof of the first assertion \eqref{eqn:5_first_assertion}.
    By the recursive form, we have
    \begin{align*}
        L_{1:t}(\theta) - \widetilde L_t(\theta)
        &= \ell_t(\theta) - \widetilde \ell_t(\theta) + L_{1:t-1}(\theta) - \widetilde L_{t-1}(\theta) + c_1 \\
        &= \sum_{s = t_0+1}^{t} \big\{ \ell_s(\theta) - \widetilde \ell_s(\theta) \big\} + L_{1:t_0}(\theta) - \widetilde L_{t_0}(\theta) + c_2 \\
        \overset{\eqref{eqn:5_1}}&{=} \sum_{s = t_0+1}^{t} \big\{ \ell_s(\theta) - \widetilde \ell_s(\theta) \big\} + L_{1:t_0}(\theta) - \dfrac{1}{2} \big\| \bOmega_{t_0}^{1/2} (\theta - \theta_{t_0}) \big\|_2^2 + c_3,
    \end{align*}
    where $c_1, c_2, c_3$ are independent of $\theta$. This completes the proof.
\end{proof}

\begin{theorem} \label{thm:online_cumulative_error}
    For given $\rmx, r > 0$, suppose that \eqref{A0}–\eqref{A5} hold.
    Then, there exists a constant $M_1 = M_1(K_6) > 0$ such that, on $\scrE_{1:4}(\rmx)$,
    \begin{align*}
        \big\| \bF_{t, \hat \theta_t^{\ML}}^{1/2} (\theta_t - \hat \theta_t^{\ML}) \big\|_2
        \leq 
        K \epsilon_{\app, t}, \quad \forall t \in \bbN \text{ with } t > M_1 t_0
    \end{align*}   
    where $K = K(K_{1:5}, L_{1:3}, \overline \lambda_0, r) > 0$.
\end{theorem}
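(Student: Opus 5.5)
The plan is to follow the sketch in Appendix~\ref{sec:online_BvM}: compare the first-order conditions of $L_{1:t}$ and $\widetilde L_t$. Throughout we work on $\scrE_{1:4}(\rmx)$ and use Theorem~\ref{thm:rate_conv}, which gives $\theta_s\in\Theta_r$ and $\|\theta_s-\theta_\star\|_2\lesssim\sqrt{(p\log s+\rmx)/s}$ for all $s\ge t_0$. By \eqref{A5} we also have $\hat\theta_t^{\ML},\hat\theta_{t_0}^{\MAP}\in\Theta_r$, since $M$ is large.

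\textbf{Step 1: gradient identity.} Differentiate the second identity of Lemma~\ref{lemma:loss_recursion} at $\theta_t$. Since $\nabla\widetilde L_t(\theta_t)=0$ and $\nabla L_{1:t}(\hat\theta^{\ML}_t)=0$, this gives
\begin{align*}
\nabla L_{1:t}(\theta_t)-\nabla L_{1:t}(\hat\theta_t^{\ML})=\sum_{s=t_0+1}^t\nabla\eta_s(\theta_t)+\nabla L_{1:t_0}(\theta_t)-\bOmega_{t_0}(\theta_t-\theta_{t_0}).
\end{align*}
For the warm-start piece, I would use the MAP optimality condition $\nabla L_{1:t_0}(\theta_{t_0})=-\bOmega_0(\theta_{t_0}-\theta_0)$ together with $\bOmega_{t_0}=\bOmega_0+\bF_{t_0,\theta_{t_0}}$. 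This rewrites the warm-start piece as
\begin{align*}
\big[\nabla L_{1:t_0}(\theta_t)-\nabla L_{1:t_0}(\theta_{t_0})-\bF_{t_0,\theta_{t_0}}(\theta_t-\theta_{t_0})\big]-\bOmega_0(\theta_t-\theta_0).
\end{align*}
This form is sharper than the one in the sketch. By \eqref{A3-a}, the bracket has norm at most $L_3t_0\|\theta_t-\theta_{t_0}\|_2^2\lesssim p\log t_0+\rmx$. The prior term is at most $\overline\lambda_0\|\theta_t-\theta_0\|_2$. I would bound this by a constant using $\theta_t\in\Theta_r$ and the well-posedness of $\theta_0$ implicit in \eqref{A5}. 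If that is not available, I would absorb it via $\|\bOmega_{t_0}^{1/2}(\theta_{t_0}-\theta_\star)\|_2\lesssim p+\rmx$.

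\textbf{Step 2: left-hand side.} Write $\nabla L_{1:t}(\theta_t)-\nabla L_{1:t}(\hat\theta^{\ML}_t)=\overline\bF\,(\theta_t-\hat\theta^{\ML}_t)$ with $\overline\bF=\int_0^1\bF_{t,\hat\theta^{\ML}_t+\alpha(\theta_t-\hat\theta_t^{\ML})}\,\rmd\alpha$. All points on this segment lie in the convex set $\Theta_r$, and $\|h\|_2\le 2r$ there. Hence \eqref{A3-b} gives $\overline\bF\succeq(1+e^{2K_6r})^{-1}\bF_{t,\hat\theta_t^{\ML}}$. It follows that
\begin{align*}
\|\bF_{t,\hat\theta_t^{\ML}}^{1/2}(\theta_t-\hat\theta_t^{\ML})\|_2\le c\,\|\bF_{t,\hat\theta_t^{\ML}}^{-1/2}(\text{RHS of Step 1})\|_2\le c\,(K_2t)^{-1/2}\|\text{RHS}\|_2,
\end{align*}
where the last step uses \eqref{A1} for $\lambda_{\min}(\bF_{t,\hat\theta_t^{\ML}})\ge K_2t$.

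\textbf{Step 3: cumulative approximation error (the main term).} From \eqref{def:tilde_loss}, $\nabla\eta_s(\theta_t)=(\overline\bH_s(\theta_t)-\bH_s)(\theta_t-\theta_{s-1})$. By \eqref{A3-a}, $\|\nabla\eta_s(\theta_t)\|_2\le\tfrac{L_3}{2}\|\theta_t-\theta_{s-1}\|_2^2\le L_3(\|\Delta_t\|_2^2+\|\Delta_{s-1}\|_2^2)$. Theorem~\ref{thm:rate_conv} bounds this by a constant times $(p\log t+\rmx)/(s-1)$; here $(p\log s+\rmx)/s$ is bounded using $s\le t$ and $t^{-1}\le s^{-1}$. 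Summing over $s=t_0+1,\dots,t$ gives $\lesssim(p\log t+\rmx)\log(t/t_0)$. Dividing by $\sqrt t$ yields exactly $\epsilon_{\app,t}$.

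\textbf{Step 4: combining.} The warm-start contribution is $\lesssim(p\log t+\rmx+\overline\lambda_0 C)/\sqrt t$. I would choose $M_1\ge e$ so that $t>M_1t_0$ forces $\log(t/t_0)\ge1$, which lets this contribution be absorbed into $\epsilon_{\app,t}$. Any further dependence of $M_1$ on $K_6$ would enter only through the Hessian comparison constant in Step 2.

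The main obstacle is Step 3, specifically controlling the sum uniformly over $s$ through the trajectory rates. A secondary concern is making sure the $\bOmega_0$ term is handled by the available assumptions. If Step~1's rewriting fails there, I would fall back on bounding $\|\bOmega_{t_0}(\theta_t-\theta_{t_0})\|_2$ directly, using $\|\bOmega_{t_0}\|_2\lesssim t_0$ and $\|\theta_t-\theta_{t_0}\|_2\lesssim\sqrt{(p\log t_0+\rmx)/t_0}$. That gives a $\sqrt{t_0(p\log t_0+\rmx)/t}$ term, which is still $\lesssim\epsilon_{\app,t}$ because $t_0\le p\log t+\rmx$ up to the constant $M$.
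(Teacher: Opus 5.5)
Your skeleton matches the paper's proof: the gradient identity from Lemma~\ref{lemma:loss_recursion}, the lower bound on the left side via \eqref{A3-b}, and the cumulative error via \eqref{A3-a} and Theorem~\ref{thm:rate_conv}. Step~3 is correct. The gap is the prior term left over by your MAP-based rewriting. The rewriting itself is algebraically right, and the bracket is indeed at most $\tfrac{L_3}{2}t_0\|\theta_t-\theta_{t_0}\|_2^2\lesssim p\log t_0+\rmx$. However, nothing in \eqref{A0}--\eqref{A5} bounds $\|\theta_t-\theta_0\|_2$ by a constant, or even by $\sqrt p$. Assumption \eqref{A0} only constrains the spectrum of $\bOmega_0$, and the ``well-posedness implicit in \eqref{A5}'' is not a usable hypothesis; \eqref{A5} is compatible with $\|\theta_0-\theta_\star\|_2$ growing with $t_0$. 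The only handle on $\theta_0$ is the MAP first-order condition at $t_0$, combined with the ML one:
\begin{align*}
\bOmega_0(\theta_{t_0}-\theta_0)=-\nabla L_{1:t_0}(\theta_{t_0})=-\big(\nabla L_{1:t_0}(\theta_{t_0})-\nabla L_{1:t_0}(\hat\theta^{\ML}_{t_0})\big).
\end{align*}
By \eqref{A2} and \eqref{A5}, its norm is at most $L_2t_0\|\theta_{t_0}-\hat\theta^{\ML}_{t_0}\|_2\lesssim\sqrt{t_0(p+\log t_0+\rmx)}$. Your first fallback does not close this. The quantity $\|\bOmega_{t_0}^{1/2}(\theta_{t_0}-\theta_\star)\|_2$ controls only $\nabla L_{1:t_0}(\theta_{t_0})-\nabla L_{1:t_0}(\theta_\star)$. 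That leaves the score $\nabla L_{1:t_0}(\theta_\star)$, which again needs $\nabla L_{1:t_0}(\hat\theta^{\ML}_{t_0})=0$; no score concentration bound is part of $\scrE_{1:4}(\rmx)$. Your second fallback bounds $\bOmega_{t_0}(\theta_t-\theta_{t_0})$, which is the paper's term $({\rm iv})$, but it leaves $\nabla L_{1:t_0}(\theta_t)$ unbounded.

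The missing ingredient is therefore exactly the paper's term $({\rm iii})$. Subtract $\nabla L_{1:t_0}(\hat\theta^{\ML}_{t_0})=0$ and use \eqref{A2} with the \eqref{A5} rate at $t_0$. After multiplying by $\lambda_{\min}^{-1/2}(\bF_{t,\hat\theta_t^{\ML}})$, this gives $\lesssim\sqrt{t_0(p\log t_0+\rmx)/t}$. With that, your argument closes, but the gain promised by the rewriting disappears. The warm-start contribution is $\sqrt{t_0(p\log t+\rmx)/t}$, not $(p\log t+\rmx)/\sqrt t$. Absorbing it into $\epsilon_{\app,t}$ needs $t_0\lesssim_M p\log t+\rmx$, i.e.\ $t_0$ of order $M(p\log(p\vee3)+\rmx)$. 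The paper makes the same implicit choice, which is why its final constant depends on $M$. Your sharper bound would hold only under an extra hypothesis such as $\|\bOmega_0^{1/2}(\theta_0-\theta_\star)\|_2^2\lesssim p$.

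A smaller point concerns Step~2. There, the factor $1+e^{2K_6r}$ makes $K$ depend on $K_6$, which the statement excludes. The paper instead takes $M_1=M_1(K_6)$ large enough that $\|\theta_t-\hat\theta^{\ML}_t\|_2\le K_6^{-1}\log 2$ for $t>M_1t_0$. This gives the universal factor $1/3$ and is where the $K_6$-dependence of $M_1$ comes from.
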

\begin{proof}
    Let $\rmx > 0$.
    In this proof, we will work on the event $\scrE_{1:4}(\rmx)$ without explicitly mentioning it. 
    Also, we may assume that $M = M( K_{1:5}, L_{1:2}, r)$ and $M_1 = M_1(K_6)$ are large enough constants.
    Let $t \in \bbN$ with $t > M_1 t_0$.  
    For $\theta \in \Theta$, let 
    \begin{align*}
        \eta_t(\theta) 
        = \ell_t(\theta) - \widetilde \ell_t(\theta)
        = \ell_t(\theta) 
        - \Big[ \ell_t(\theta_{t-1}) + \langle g_t, \theta - \theta_{t-1}  \rangle + \dfrac{1}{2} \langle \bH_t, (\theta - \theta_{t-1})^{\otimes 2}  \rangle \Big].
    \end{align*}
    By Lemma \ref{lemma:loss_recursion}, we have
    \begin{align*} 
        L_{1:t}(\theta) - \widetilde L_t(\theta) 
        = \sum_{s = t_0+1}^{t} \eta_s(\theta) + L_{1:t_0}(\theta) - \dfrac{1}{2} \big\| \bOmega_{t_0}^{1/2} (\theta - \theta_{t_0}) \big\|_2^2 + {\rm const}.
    \end{align*}
    It follows that
    \begin{align} \label{eqn:6_1} 
        \nabla L_{1:t}(\theta) - \nabla \widetilde L_t(\theta) 
        =
        \sum_{s = t_0+1}^{t} \nabla \eta_s(\theta) + \nabla L_{1:t_0}(\theta) - \bOmega_{t_0} (\theta - \theta_{t_0}).
    \end{align}
    By the first-order optimality, we have
    \begin{align} \label{eqn:6_2} 
        \nabla \widetilde L_t(\theta_t) = 0, \quad 
        \nabla L_{1:t}(\hat \theta_{t}^{\ML}) = 0, \quad 
        \nabla L_{1:t_0}(\hat \theta_{t_0}^{\ML}) = 0.
    \end{align}
    Also, by Taylor's theorem, we have
    \begin{align} \label{eqn:6_3} 
        \nabla \eta_t(\theta) = (\overline \bH_t(\theta) -  \bH_t) (\theta - \theta_{t-1}),
    \end{align} 
    where $\overline \bH_t(\theta) = \int_{0}^{1} \nabla^2 \ell_t ( s \theta + (1-s) \theta_{t-1} ) \rmd s$.
    Combining \eqref{eqn:6_1}, \eqref{eqn:6_2} and \eqref{eqn:6_3}, we have
    \begin{align*}
        \nabla &L_{1:t}(\theta_t) - \nabla L_{1:t}(\hat \theta_t^{\ML} )
        = \nabla L_{1:t}(\theta_t) 
        = \nabla L_{1:t}(\theta_t) - \nabla \widetilde L_t(\theta_t) \\
        &= \sum_{s = t_0+1}^{t} \nabla \eta_s(\theta_t) + \nabla L_{1:t_0}(\theta_t) - \bOmega_{t_0} (\theta_t - \theta_{t_0}) \\
        &= \sum_{s = t_0+1}^{t} (\overline \bH_s(\theta_t) -  \bH_s) (\theta_t - \theta_{s-1}) 
        + \nabla L_{1:t_0}(\theta_t) - \nabla L_{1:t_0}(\hat \theta_{t_0}^{\ML}) 
        - \bOmega_{t_0} (\theta_t - \theta_{t_0}).
    \end{align*}
    It follows that
    \begin{align*}
        &\bF_{t, \hat \theta_t^{\ML}}^{-1/2} \left( \nabla L_{1:t}(\theta_t) - \nabla L_{1:t}( \hat \theta_t^{\ML}) \right) \\
        &\quad = \bF_{t, \hat \theta_t^{\ML}}^{-1/2} \sum_{s = t_0+1}^{t} \big( \overline \bH_s(\theta_t) -  \bH_s \big) (\theta_t - \theta_{s-1}) 
        + \bF_{t, \hat \theta_t^{\ML}}^{-1/2} \big(\nabla L_{1:t_0}(\theta_t) - \nabla L_{1:t_0}( \hat \theta_{t_0}^{\ML} ) \big) \\
        &\qquad - \bF_{t, \hat \theta_t^{\ML}}^{-1/2} \bOmega_{t_0} (\theta_t - \theta_{t_0})
    \end{align*}
    where $\bF_{t, \hat \theta_t^{\ML}} = \nabla^2 L_{1:t}\big( \hat \theta_t^{\ML} \big)$.
    Then, we have
    \begin{align*}
        ({\rm i})
        \leq 
        ({\rm ii}) + ({\rm iii})+ ({\rm iv}), 
    \end{align*}
    where
    \begin{align*}
        ({\rm i}) &= \left\| \bF_{t, \hat \theta_t^{\ML}}^{-1/2} \big( \nabla L_{1:t}(\theta_t) - \nabla L_{1:t}(\hat \theta_t^{\ML}) \big) \right\|_2, \\
        ({\rm ii}) &= \left\| \bF_{t, \hat \theta_t^{\ML}}^{-1/2}\sum_{s = t_0+1}^{t} \big( \overline \bH_s(\theta_t) -  \bH_s \big) (\theta_t - \theta_{s-1}) \right\|_2, \\
        ({\rm iii}) &= \Big\| \bF_{t, \hat \theta_t^{\ML}}^{-1/2} \big(\nabla L_{1:t_0}(\theta_t) - \nabla L_{1:t_0}( \hat \theta_{t_0}^{\ML} ) \big) \Big\|_2, \\
        ({\rm iv}) &= \Big\| \bF_{t, \hat \theta_t^{\ML}}^{-1/2}\bOmega_{t_0} (\theta_t - \theta_{t_0}) \Big\|_2. 
    \end{align*}
    We will obtain a lower bound of $({\rm i})$ and upper bounds of $({\rm ii})$, $({\rm iii})$ and $({\rm iv})$.
    
    \noindent \textbf{Step 1:} $({\rm i})$ \\
    Note that
    \begin{align} \label{eqn:6_4}
        \nabla L_{1:t}(\theta_t) - \nabla L_{1:t}( \hat \theta_t^{\ML} )
        = 
        \left( \int_{0}^{1} \nabla^2 L_{1:t}\big( s \theta_t + (1-s) \hat \theta_t^{\ML} \big) \rmd s \right)
        (\theta_t - \hat \theta_t^{\ML} ).
    \end{align}
    By Theorem \ref{thm:rate_conv} and \eqref{A5}, we have
    \begin{align} \label{eqn:6_4_2}
        \| \theta_t - \theta_{\star} \|_2 \leq c_1 \sqrt{\dfrac{p \log t + \rmx}{t}}, \quad 
        \| \hat \theta_t^{\ML} - \theta_{\star} \|_2 \leq c_2 \sqrt{\dfrac{p + \log t + \rmx}{t}},
    \end{align}
    where $c_1 = c_1(K_{2:5}) > 0$ and $c_2$ is a universal constant. It follows that
    \begin{align} \label{eqn:6_4_3}
        \| \theta_t - \hat \theta_t^{\ML} \|_2 \leq c_3 \sqrt{\dfrac{p \log t + \rmx}{t}} \leq K_6^{-1} \log 2,
    \end{align}
    where $c_3 = c_3(K_{2:5}) > 0$, and the last inequality holds by
    \begin{align*}
        t > M_1 t_0 \overset{\eqref{A5}}{\geq} M_1 M (p\log (p \vee 3) + \rmx)
    \end{align*}
    with large enough $M_1 = M_1(K_6)$ and $M = M(c_3) =  M(K_{2:5})$. Hence, by \eqref{A3}, we have
    \begin{align*}
        \bF_{t, \hat \theta_t^{\ML}}^{-1/2}
        \left( \int_{0}^{1} \nabla^2 L_{1:t}\big( s \theta_t + (1-s) \hat \theta_t^{\ML} \big) \rmd s \right)
        \bF_{t, \hat \theta_t^{\ML}}^{-1/2}
        &\succeq
        \bF_{t, \hat \theta_t^{\ML}}^{-1/2}
        \left( \dfrac{1}{3} \nabla^2 L_{1:t}\big( \hat \theta_t^{\ML} \big) \right)
        \bF_{t, \hat \theta_t^{\ML}}^{-1/2} \\
        &= \dfrac{1}{3} \bI_p,
    \end{align*}
    which, combining with \eqref{eqn:6_4}, implies that
    \begin{align} \label{eqn:6_5}
        \Big\| \bF_{t, \hat \theta_t^{\ML}}^{-1/2} \big(\nabla L_{1:t}(\theta_t) - \nabla L_{1:t}(\hat \theta_t^{\ML}) \big) \Big\|_2
        \geq 
        \dfrac{1}{3} \big\| \bF_{t, \hat \theta_t^{\ML}}^{1/2} (\theta_t - \hat \theta_t^{\ML}) \big\|_2.
    \end{align}
    
    \noindent \textbf{Step 2:} $({\rm ii})$ \\
    Recall that $\overline \bH_s(\theta) = \int_{0}^{1} \nabla^2 \ell_s ( \alpha \theta + (1-\alpha) \theta_{s-1} ) \rmd \alpha$.
    Note that
    \begin{align*}
        &\left\| \sum_{s = t_0+1}^{t} (\overline \bH_s(\theta_t) -  \bH_s) (\theta_t - \theta_{s-1}) \right\|_2
        \leq 
        \sum_{s = t_0+1}^{t} \left\| (\overline \bH_s(\theta_t) -  \bH_s) (\theta_t - \theta_{s-1}) \right\|_2 \\
        &\qquad \leq  
        \sum_{s = t_0+1}^{t} \big\| \overline \bH_s(\theta_t) -  \bH_s \big\|_2 \left\| \theta_t - \theta_{s-1} \right\|_2 \\
        &\qquad \leq  
        \sum_{s = t_0+1}^{t}
        \int_{0}^{1} \left\| \nabla^2 \ell_s \big( \overline \theta_{s-1}(t, \alpha) \big) - \nabla^2 \ell_s \big( \theta_{s-1} \big) \right\|_2 \rmd \alpha \left\| \theta_t - \theta_{s-1} \right\|_2 \\
        &\qquad \leq  
        \sum_{s = t_0+1}^{t}
        \int_{0}^{1} 
        L_3 \big\| \overline \theta_{s-1}(t, \alpha) - \theta_{s-1} \big\|_2 \rmd \alpha \left\| \theta_t - \theta_{s-1} \right\|_2 \\
        &\qquad \leq  
        \sum_{s = t_0+1}^{t}
        \int_{0}^{1} 
        L_3 \left\| \theta_t - \theta_{s-1} \right\|_2 \rmd \alpha \left\| \theta_t - \theta_{s-1} \right\|_2 \\
        &\qquad = 
        \sum_{s = t_0+1}^{t} L_3 \left\| \theta_t - \theta_{s-1} \right\|_2^2,
    \end{align*}
    where the last two inequalities hold by \eqref{A3} and $\overline \theta_{s-1}(t, \alpha) = \alpha \theta_t + (1- \alpha) \theta_{s-1}$ for $\alpha \in [0, 1]$. Also,
    \begin{align*}
        \left\| \theta_t - \theta_{s-1} \right\|_2^2
        &\leq
        2\big( \left\| \theta_t - \theta_{\star} \right\|_2^2 + \left\| \theta_{s-1} - \theta_{\star} \right\|_2^2 \big) \\
        &\leq
        2 \bigg( c_1^2 \cdot \dfrac{p \log t + \rmx}{t} + c_1^2 \cdot \dfrac{p\log (s-1) + \rmx}{s-1} \bigg) \\
        &=
        2c_1^2 \bigg( \dfrac{p \log t + \rmx}{t} + \dfrac{p\log (s-1) + \rmx}{s-1} \bigg).
    \end{align*}
    It follows that
    \begin{align*}
        &\left\| \sum_{s = t_0+1}^{t} (\overline \bH_s(\theta_t) -  \bH_s) (\theta_t - \theta_{s-1}) \right\|_2
        \leq 
        2 L_3 c_1^2 \sum_{s = t_0+1}^{t} \bigg( \dfrac{p \log t + \rmx}{t} + \dfrac{p\log (s-1) + \rmx}{s-1} \bigg) \\
        &\qquad \leq 
        2 L_3 c_1^2 \bigg[ \left( \dfrac{t - t_0}{t} (p \log t + \rmx)  \right)
        + \sum_{s = t_0}^{t-1} \bigg( \dfrac{p\log s + \rmx}{s} \bigg) \bigg] \\
        &\qquad \leq 
        c_4 \bigg[ \left( \dfrac{t - t_0}{t} (p \log t + \rmx)  \right)
        + \log(t/t_0) (p\log t + \rmx) \bigg] \\
        &\qquad \leq 
        c_4 \bigg[ (p \log t + \rmx) + \log(t/t_0) (p\log t + \rmx) \bigg] 
        \leq 
        c_5 \bigg[ \log(t/t_0)(p \log t + \rmx) \bigg],
    \end{align*}
    where $c_4 = c_4(c_1, L_3) = c_4(K_{2:5}, L_3) > 0$ and $c_5 = c_5(c_4) = c_5(K_{2:5}, L_3) > 0$.
    Also, by applying \eqref{A1} with the choice $\overline \theta_s = \hat \theta_t^{\ML}$ which lies in $\Theta_r$ by \eqref{A5}, we have $\lambda_{\min} (\bF_{t, \hat \theta_t^{\ML}}) \geq K_2 t$.
    Consequently, we have
    \begin{align} \label{eqn:6_6}
    \begin{aligned}
        &\left\| \bF_{t, \hat \theta_t^{\ML}}^{-1/2} \sum_{s = t_0+1}^{t} (\overline \bH_s(\theta_t) -  \bH_s) (\theta_t - \theta_{s-1}) \right\|_2 \\
        &\qquad \leq 
        \lambda_{\min}^{-1/2} \big( \bF_{t, \hat \theta_t^{\ML}} \big)
        \left\| \sum_{s = t_0+1}^{t} (\overline \bH_s(\theta_t) -  \bH_s) (\theta_t - \theta_{s-1}) \right\|_2 \\
        &\qquad \leq 
        (K_2 t)^{-1/2} c_5 \bigg[ \log(t/t_0)(p \log t + \rmx) \bigg]
        =
        c_6 \bigg[ t^{-1/2} \log(t/t_0)(p \log t + \rmx) \bigg],
    \end{aligned}        
    \end{align}
    where $c_6 = c_6(c_5, K_2) = c_6(K_{2:5}, L_3) = K_2^{-1/2} c_5$.

    \noindent \textbf{Step 3:} $({\rm iii})$ \\
    Note that
    \begin{align*} 
        \nabla L_{1:t_0}(\theta_t) - \nabla L_{1:t_0}(\hat \theta_{t_0}^{\ML} )
        = 
        \left( \int_{0}^{1} \nabla^2 L_{1:t_0}\big( s \theta_t + (1-s) \hat \theta_{t_0}^{\ML} \big) \rmd s \right)
        (\theta_t - \hat \theta_{t_0}^{\ML}).
    \end{align*}
    As in \eqref{eqn:6_4_3}, we have
    \begin{align*}
        \| \theta_t - \hat \theta_{t_0}^{\ML} \|_2 \leq c_3 \sqrt{\dfrac{p \log t_0 + \rmx}{t_0}}.
    \end{align*}
    Also, by $\theta_t, \hat \theta_{t_0}^{\ML} \in \Theta_r$ and \eqref{A2}, we have
    \begin{align*}
        \left\| \int_{0}^{1} \nabla^2 L_{1:t_0}\big( s \theta_t + (1-s) \hat \theta_{t_0}^{\ML} \big) \rmd s \right\|_2
        &\leq 
        \sup_{s \in [0, 1]} \left\| \nabla^2 L_{1:t_0}\big( s \theta_t + (1-s) \hat \theta_{t_0}^{\ML} \big) \right\|_2 \\
        &\leq 
        L_2 t_0.
    \end{align*}
    It follows that
    \begin{align} 
    \begin{aligned} \label{eqn:6_7}
        &\Big\| \bF_{t, \hat \theta_t^{\ML}}^{-1/2} \big(\nabla L_{1:t_0}(\theta_t) - \nabla L_{1:t_0}(\hat \theta_{t_0}^{\ML}) \big) \Big\|_2 \\
        &\qquad \leq 
        \lambda_{\min}^{-1/2} \big( \bF_{t, \hat \theta_t^{\ML}} \big)
        \Big\| \nabla L_{1:t_0}(\theta_t) - \nabla L_{1:t_0}(\hat \theta_{t_0}^{\ML}) \Big\|_2 \\
        &\qquad \leq 
        \lambda_{\min}^{-1/2} \big( \bF_{t, \hat \theta_t^{\ML}} \big)
        \left\| \int_{0}^{1} \nabla^2 L_{1:t_0}\big( s \theta_t + (1-s) \hat \theta_{t_0}^{\ML} \big) \rmd s \right\|_2
        \| \theta_t - \hat \theta_{t_0}^{\ML} \|_2 \\
        &\qquad \leq 
        \big( K_2 t \big)^{-1/2}
        \big( L_2 t_0 \big)
        \left( c_3 \sqrt{\dfrac{p \log t_0 + \rmx}{t_0}} \right)
        =
        c_7 \sqrt{ \dfrac{t_0 (p \log t_0 + \rmx)}{t} },
    \end{aligned}        
    \end{align}
    where $c_7 = c_7(c_3, K_2, L_2) = c_7(K_{2:5}, L_2) = K_2^{-1/2} L_2 c_3$.

    \noindent \textbf{Step 4:} $({\rm iv})$ \\
    Note that
    \begin{align*}
        \| \bOmega_{t_0} \|_2 \overset{\eqref{A2}}{\leq} L_2 t_0 + \overline \lambda_0, \quad 
        \lambda_{\min} \big( \bOmega_t \big) \overset{\eqref{A1}}{\geq} K_2 t + \underline \lambda_0,
    \end{align*}    
    where the inequalities holds by $\theta_t, \theta_{t_0} \in \Theta_r$, \eqref{A1} and \eqref{A2}.
    Also,
    \begin{align*}
        &\big\| \bOmega_{t_0} (\theta_t - \theta_{t_0}) \big\|_2 
        \leq 
        \big\| \bOmega_{t_0} (\theta_t - \theta_{\star}) \big\|_2 
        +
        \big\| \bOmega_{t_0} (\theta_{t_0} - \theta_{\star}) \big\|_2  \\
        &\qquad =
        \big\| \bOmega_{t_0} \bOmega_{t}^{-1/2} \bOmega_{t}^{1/2} (\theta_t - \theta_{\star}) \big\|_2 
        +
        \big\| \bOmega_{t_0}^{1/2} \bOmega_{t_0}^{1/2} (\theta_{t_0} - \theta_{\star}) \big\|_2 \\
        &\qquad \leq 
        \big\| \bOmega_{t_0} \bOmega_{t}^{-1/2} \big\|_2
        \big\| \bOmega_{t}^{1/2} (\theta_t - \theta_{\star}) \big\|_2 
        +
        \big\| \bOmega_{t_0}^{1/2} \big\|_2
        \big\| \bOmega_{t_0}^{1/2} (\theta_{t_0} - \theta_{\star}) \big\|_2 \\
        &\qquad \leq
        \big( L_2 t_0 + \overline \lambda_0 \big)
        \big( K_2 t + \underline \lambda_0  \big)^{-1/2}
        c_8 \big( p \log t + \rmx \big)^{1/2}
        +
        \big( L_2 t_0 + \overline \lambda_0 \big)^{1/2}
        c_8 \big( p + \rmx \big)^{1/2} \\
        &\qquad \leq 
        \big( L_2 t_0 + \overline \lambda_0 \big)
        \big( K_2 t \big)^{-1/2}
        c_8 \big( p \log t + \rmx \big)^{1/2}
        +
        \big( L_2 t_0 + \overline \lambda_0 \big)^{1/2}
        c_8 \big( p + \rmx \big)^{1/2}
    \end{align*}
    where $c_8 = c_8(K_{3:5}) > 0$, and the third inequality holds by Theorem \ref{thm:rate_conv}, \eqref{A5} and the previous display. It follows that
    \begin{align}
    \begin{aligned} \label{eqn:6_8}
        &\Big\| \bF_{t, \hat \theta_t^{\ML}}^{-1/2}\bOmega_{t_0} (\theta_t - \theta_{t_0}) \Big\|_2 
        \leq 
        \lambda_{\min}^{-1/2} \big( \bF_{t, \hat \theta_t^{\ML}} \big)
        \big\| \bOmega_{t_0} (\theta_t - \theta_{t_0}) \big\|_2 \\
        &\quad \leq 
        \big( K_2 t \big)^{-1/2} 
        \big\| \bOmega_{t_0} (\theta_t - \theta_{t_0}) \big\|_2 \\
        &\quad \leq
        \big( L_2 t_0 + \overline \lambda_0 \big)
        \big( K_2 t \big)^{-1}
        c_8 \big( p \log t + \rmx \big)^{1/2} \\
        &\qquad +
        \big( L_2 t_0 + \overline \lambda_0 \big)^{1/2}
        \big( K_2 t \big)^{-1/2}
        c_8 \big( p + \rmx \big)^{1/2} \\
        &\quad \leq
        \big( L_2 t_0 + \overline \lambda_0 \big)
        \big( K_2 t \big)^{-1}
        c_8 \big( p \log t + \rmx \big)^{1/2} \\
        &\qquad +
        \big( L_2 t_0 + \overline \lambda_0 \big)^{1/2}
        \big( K_2 t \big)^{-1/2}
        c_8 \big( p\log t + \rmx \big)^{1/2} \\
        &\quad \leq
        c_8 \bigg( \big( L_2 t_0 + \overline \lambda_0 \big) K_2^{-1} t^{-1/2} 
        + \big( L_2 t_0 + \overline \lambda_0 \big)^{1/2} K_{2}^{-1/2} \bigg)
        \sqrt{\dfrac{p \log t + \rmx}{t}} \\
        &\quad \leq
        c_9 \big( \sqrt{t_0/t} + 1 \big)
        \sqrt{\dfrac{t_0(p \log t + \rmx)}{t}}
        \leq
        2c_9 \sqrt{\dfrac{t_0(p \log t + \rmx)}{t}},
    \end{aligned}        
    \end{align}
    where $c_9 = c_9(c_8, L_2, \overline \lambda_0, K_2) = c_9(K_{2:5}, L_2, \overline \lambda_0) > 0$.

    \noindent \textbf{Step 5:} 
    Combining \eqref{eqn:6_5}-\eqref{eqn:6_8}, we have
    \begin{align*}
        &\dfrac{1}{3} \big\| \bF_{t, \hat \theta_t^{\ML}}^{1/2} (\theta_t - \hat \theta_t^{\ML} ) \big\|_2
        \leq ({\rm i})
        \leq 
        ({\rm ii}) + ({\rm iii})+ ({\rm iv}) \\
        &\qquad \leq 
        c_6
        \sqrt{ \dfrac{\log^2(t/t_0)(p \log t + \rmx)^2}{t} }
        +
        c_7 \sqrt{ \dfrac{t_0 (p \log t_0 + \rmx)}{t} }
        +
        2c_9 \sqrt{\dfrac{t_0(p \log t + \rmx)}{t}} \\
        &\qquad \leq 
        c_6
        \sqrt{ \dfrac{\log^2(t/t_0)(p \log t + \rmx)^2}{t} }
        +
        (c_7 + 2c_9) \sqrt{\dfrac{t_0(p \log t + \rmx)}{t}} \\
        &\qquad \leq 
        c_{10}
        \sqrt{ \dfrac{\log^2(t/t_0)(p \log t + \rmx)^2}{t} }
    \end{align*}
    where $c_{10} = c_{10}(c_6, c_7, c_9, M) = c_{10}(K_{1:5}, L_{1:3}, \overline \lambda_0, r) > 0$.
    It follows that
    \begin{align*}
        \big\| \bF_{t, \hat \theta_t^{\ML}}^{1/2} (\theta_t - \hat \theta_t^{\ML} ) \big\|_2
        \leq 
        3c_{10}
        \sqrt{ \dfrac{\log^2(t/t_0)(p \log t + \rmx)^2}{t} },
    \end{align*}
    which completes the proof.
\end{proof}

\begin{remark}[Rate improvement]
    One can replace the conclusion in Theorem \ref{thm:online_cumulative_error} with 
    \begin{align*}
        \big\| \bF_{t, \hat \theta_t^{\ML}}^{1/2} (\theta_t - \hat \theta_t^{\ML}) \big\|_2
        \leq 
        K \epsilon_{\app, t}', \quad \forall t \in \bbN \text{ with } t > M_1 t_0
    \end{align*}   
    where $K = K(K_{1:5}, L_{1:3}, \overline \lambda_0) > 0$ and
    \begin{align*}
        \epsilon_{\app, t}' = \sqrt{ \dfrac{\log^2(t/t_0)(p \log t + \rmx) \big\{ t_0 \vee (p \log t + \rmx) \big\}  }{t} }.
    \end{align*}
    For simplicity, we retain the original statement.
\end{remark}

\begin{proof}[Proof of Proposition \ref{prop:online_location}]
    Let $\rmx > 0$.
    In this proof, we will work on the event $\scrE_{1:4}(\rmx)$ without explicitly mentioning it. 
    Also, we may assume that $M = M( K_{1:5}, L_{1:2}, r)$ and $M_1 = M_1(K_6)$ are large enough constants.
    Let $t \in \bbN$ with $t > M_1 t_0$. By Theorem \ref{thm:online_cumulative_error}, we have
    \begin{align*}
        \big\| \bF_{t, \hat \theta_t^{\ML}}^{1/2} (\theta_t - \hat \theta_t^{\ML}) \big\|_2
        &\leq 
        c_1 \epsilon_{\app, t},
    \end{align*}
    where $c_1 = c_1(K_{1:5}, L_{1:3}, \overline \lambda_0, r) > 0$ and $\epsilon_{\app, t}$ is specified in Theorem \ref{thm:online_cumulative_error}.
    By \eqref{A5}, we have
    \begin{align} \label{eqn:7_2}  
        \| \hat \theta_t^{\ML} - \theta_{\star} \|_2 
        \leq c_2 \sqrt{\dfrac{p + \log t + \rmx}{t}}
        \leq K_6^{-1} \log 2
    \end{align}
    where $c_2$ is a universal constant and the last inequality holds by
    \begin{align*}
        t > M_1 t_0 \overset{\eqref{A5}}{\geq} M_1 M (p\log (p \vee 3) + \rmx)
    \end{align*}
    with large enough $M_1 = M_1(K_6)$ and $M$.
    Let $h_t = \hat \theta_t^{\ML} - \theta_{\star}$.
    By \eqref{eqn:7_2} and \eqref{A3}, we have
    \begin{align*}
        \dfrac{1}{3} \bF_{t, \theta_{\star}}
        \preceq
        \dfrac{1}{1 + e^{K_6 \| h_t \|_2 }} \bF_{t, \theta_{\star}}
        \preceq
        \bF_{t, \hat \theta_{t}^{\ML}},
    \end{align*}
    which implies that
    \begin{align*}
        \big\| \bF_{t, \hat \theta_{t}^{\ML}}^{-1/2} \bF_{t, \theta_{\star}}  \bF_{t, \hat \theta_{t}^{\ML}}^{-1/2} \big\|_2
        \leq 3.
    \end{align*}
    Therefore, we have
    \begin{align*}
        \big\| \bF_{t, \theta_{\star}}^{1/2} (\hat \theta_t^{\ML} - \theta_t) \big\|_2
        \leq
        \big\| \bF_{t, \hat \theta_{t}^{\ML}}^{-1/2} \bF_{t, \theta_{\star}}  \bF_{t, \hat \theta_{t}^{\ML}}^{-1/2} \big\|_2^{1/2}
        \big\| \bF_{t, \hat \theta_t^{\ML}}^{1/2} (\hat \theta_t^{\ML} - \theta_t) \big\|_2
        \leq 
        \sqrt{3} c_1 \epsilon_{\app, t}.
    \end{align*}
    This completes the proof.
\end{proof}

\begin{proof}[Proof of Proposition \ref{prop:online_precision}]
    Let $\rmx > 0$. In this proof, we will work on the event $\scrE_{1:4}(\rmx)$ without explicitly mentioning it. 
    For notational simplicity, we denote in this proof, for $t \in \bbN$ and $\theta \in \Theta$,
    \begin{align*}
        \bH_t(\theta) = \nabla^2 \ell_t (\theta).
    \end{align*}    
    By Theorem \ref{thm:rate_conv} and \eqref{A5}, we have, for any $t \in \bbN$ with $t \geq t_0$,
    \begin{align} \label{eqn:8_1} 
        \| \theta_t - \theta_{\star} \|_2 \leq c_1 \sqrt{\dfrac{p \log t + \rmx}{t}}, \quad 
        \| \hat \theta_t^{\MAP} - \theta_{\star} \|_2 \leq c_2 \sqrt{\dfrac{p + \log t + \rmx}{t}},
    \end{align}
    where $c_1 = c_1(K_{2:5}) > 0$ and $c_2$ is a universal constant. Also, by \eqref{eqn:8_1}, $t \geq t_0$ and \eqref{A1}, we have
    \begin{align} \label{eqn:8_2} 
        \lambda_{\min} ( \bF_{t, \theta_{\star}} ) \geq K_2 t, \quad
        \lambda_{\min} ( \bOmega_t ) \overset{\eqref{eqn:4_2}}{\geq} K_2 t.
    \end{align}       
    Let $t \in \bbN$ with $t > t_0$. Note that
    \begin{align*}
        &\big\| \bF_{t, \theta_{\star}}^{-1/2} \bOmega_t \bF_{t, \theta_{\star}}^{-1/2} - \bI_p  \big\|_2
        =
        \big\| \bF_{t, \theta_{\star}}^{-1/2} (\bOmega_t - \bF_{t, \theta_{\star}}) \bF_{t, \theta_{\star}}^{-1/2} \big\|_2
        \overset{\eqref{eqn:8_2}}{\leq}
        (K_2 t)^{-1} \big\| \bOmega_t - \bF_{t, \theta_{\star}} \big\|_2 \\
        &\leq 
        (K_2 t)^{-1} 
        \left[ \big\| \bOmega_0 \big\|_2
        + \left\| \sum_{s=1}^{t_0} \big( \bH_s(\theta_{t_0}) - \bH_s(\theta_{\star}) \big) \right\|_2
        + \left\| \sum_{s=t_0+1}^{t} \big( \bH_s(\theta_{s-1}) - \bH_s(\theta_{\star}) \big) \right\|_2
        \right] \\
        &\leq 
        (K_2 t)^{-1} 
        \left[ \big\| \bOmega_0 \big\|_2 
        + \sum_{s=1}^{t_0} \left\| \big( \bH_s(\theta_{t_0}) - \bH_s(\theta_{\star}) \big) \right\|_2
        + \sum_{s=t_0+1}^{t} \left\| \big( \bH_s(\theta_{s-1}) - \bH_s(\theta_{\star}) \big) \right\|_2
        \right] \\
        \overset{\eqref{A3}}&{\leq}
        (K_2 t)^{-1} 
        \left[ \overline \lambda_0
        + \sum_{s=1}^{t_0} L_3 \| \theta_{t_0} - \theta_{\star} \|_2
        + \sum_{s=t_0}^{t-1} L_3 \| \theta_{s} - \theta_{\star} \|_2
        \right] \\
        \overset{\eqref{eqn:8_1}}&{\leq}
        (K_2 t)^{-1} 
        \left[ \overline \lambda_0
        + c_2 L_3 t_0 \sqrt{\dfrac{p + \log t_0 + \rmx}{t_0}}
        + c_1 L_3 \sum_{s=t_0}^{t-1} \sqrt{\dfrac{p \log s + \rmx}{s}}
        \right] \\
        &\leq 
        c_3 
        \left[ t^{-1}
        + \dfrac{  \sqrt{t_0 (p + \log t_0 + \rmx) }  }{t}
        + \dfrac{  \sqrt{t (p\log t + \rmx) }  }{t}
        \right] \\
        &= 
        c_3 
        \left[ t^{-1}
        + \sqrt{ \dfrac{ t_0 }{t} } \sqrt{ \dfrac{ (p + \log t_0 + \rmx)  }{t} }
        + \sqrt{ \dfrac{ p\log t + \rmx }{t} }
        \right] 
        \leq 
        3c_3 \sqrt{ \dfrac{ p\log t + \rmx }{t} },
    \end{align*}
    where $c_3 = c_3(c_1, c_2, K_2, L_3) = c_3(K_{2:5}, L_3, \overline \lambda_0) > 0$.
    By applying similar technique, we have
    \begin{align*}
        \big\| \bOmega_t^{-1/2} \bF_{t, \theta_{\star}} \bOmega_t^{-1/2} - \bI_p  \big\|_2
        \leq 
        3c_3 \sqrt{ \dfrac{ p\log t + \rmx }{t} },
    \end{align*}
    which completes the proof.
\end{proof}

\begin{proof}[Proof of Theorem \ref{thm:online_BvM}]
    Let $\rmx > 0$.
    In this proof, we will work on the event $\scrE_{1:4}(\rmx)$ without explicitly mentioning it. 
    Also, we may assume that $M_2 = M_2(K_6, L_3, \overline \lambda_0)$ is large enough constant.
    Let $t \in \bbN$ with $t > M_2 t_0$. By Proposition \ref{prop:online_precision} we have
    \begin{align} \label{eqn:9_1}
        \big\| \bOmega_t^{-1/2} \bF_{t, \theta_{\star}} \bOmega_t^{-1/2} - \bI_p  \big\|_2
        \leq c_1 \tilde \epsilon_{\app, t}
        = c_1 \sqrt{ \dfrac{ p\log t + \rmx }{t} } \leq 0.684,
    \end{align}
    where $c_1 = c_1(K_{2:5}, L_3, \overline \lambda_0) > 0$ and the last inequality holds by 
    \begin{align*}
        t > M_2 t_0 \overset{\eqref{A5}}{\geq} M_2 M (p\log (p \vee 3) + \rmx)
    \end{align*}
    with large enough $M_2 = M_2(L_3, \overline \lambda_0)$ and $M = M(K_{2:5})$.
    Note that \eqref{eqn:9_1} allows us to utilize Lemma \ref{lemma:Gaussian_TV}. Also, by Proposition \ref{prop:online_location}, we have
     \begin{align*}
        \big\| \bF_{t, \theta_{\star}}^{1/2} ( \hat \theta_t^{\ML} - \theta_t) \big\|_2
        \leq 
        c_2 \epsilon_{\app, t} 
        = c_2 \sqrt{ \dfrac{\log^2(t/t_0)(p \log t + \rmx)^2}{t} },
    \end{align*}   
    where $c_2 = c_2(K_{1:5}, L_{1:3}, \overline \lambda_0, r) > 0$.
    It follows that
    \begin{align*}
        \big\| \bOmega_t^{-1/2} \bF_{t, \theta_{\star}} \bOmega_t^{-1/2} - \bI_p  \big\|_{\rm F}
        &\leq 
        \sqrt{p} \big\| \bOmega_t^{-1/2} \bF_{t, \theta_{\star}} \bOmega_t^{-1/2} - \bI_p  \big\|_{2}
        \leq 
        c_1 \sqrt{\dfrac{ p(p\log t + \rmx) }{t}} \\
        &\leq 
        c_1 \epsilon_{\app, t}.
    \end{align*}
    By applying Lemma \ref{lemma:Gaussian_TV}, we have
    \begin{align*}
        d_{V} \big( Q_{\BvM, t}, \Pi_t \big)
        &\leq 
        \dfrac{1}{2} 
        \left( 
        \big\| \bF_{t, \theta_{\star}}^{1/2} ( \hat \theta_{t}^{\ML} - \theta_t )  \big\|_2^2
        +
        \big\| \bOmega_t^{-1/2} \bF_{t, \theta_{\star}} \bOmega_t^{-1/2} - \bI_p  \big\|_{\rm F}^2
        \right)^{1/2} \\
        &\leq 
        \dfrac{1}{2} 
        \left( 
        \big\| \bF_{t, \theta_{\star}}^{1/2} ( \hat \theta_{t}^{\ML} - \theta_t )  \big\|_2
        +
        \big\| \bOmega_t^{-1/2} \bF_{t, \theta_{\star}} \bOmega_t^{-1/2} - \bI_p  \big\|_{\rm F}
        \right) \\
        &\leq 
        \dfrac{1}{2} \big( c_2 \epsilon_{\app, t} + c_1 \epsilon_{\app, t} \big)
        = c_3 \epsilon_{\app, t},
    \end{align*}
    where $c_3 = c_3(c_1, c_2) = c_3(K_{1:5}, L_{1:3}, \overline \lambda_0, r) = (c_1 + c_2)/2$. Also,
    \begin{align*}
        d_{V} \big( \Pi_t, \Pi(\cdot \mid \bD_{1:t}) \big)
        &\leq 
        d_{V} \big( \Pi_t, Q_{\BvM, t} \big)
        +
        d_{V} \big( Q_{\BvM, t}, \Pi(\cdot \mid \bD_{1:t}) \big) \\
        &\leq 
        c_3 \epsilon_{\app, t}
        +
        d_{V} \big( Q_{\BvM, t}, \Pi(\cdot \mid \bD_{1:t}) \big),
    \end{align*}
    which completes the proof.
\end{proof}


\section{Induction devices} \label{sec:induction}

\begin{lemma}[Induction lemma] \label{lemma:induction_lemma}
    Let $(\tau_k)_{k \in \bbN_0}$ be a sequence in $\bbN$ such that $\tau_k < \tau_{k+1}$ for all $k \in \bbN_0$. Let $(X_t)_{t \in \bbN_0}$ be a sequence in $\bbR_{+}$ satisfying
    \begin{align*}
        X_t \leq X_{t-1} + a_{t-1} X_{t-1}^{3/2} + b_{t}, \quad \forall t \in \bbN,
    \end{align*}
    where $a_t \in \bbR_+$ and $b_t \in \bbR$ for all $t \in \bbN_0$.
    For $t \in \bbN$ with $t \geq \tau_0$, let
    \begin{align*}
        B_t = 0 \vee \left( \max_{\tau_0 \leq s \leq t} \sum_{r = \tau_0 + 1}^{s} b_r \right), \quad
        H_t = X_{\tau_0} + B_t.
    \end{align*}
    For $k \in \bbN_0$, let
    \begin{align*}
        A_k = \sum_{t = \tau_k}^{\tau_{k+1}} a_t, \quad 
        S_k = H_{\tau_{k+1}} + \sum_{t= \tau_0 + 1}^{\tau_k} a_{t-1} X_{t-1}^{3/2}.
    \end{align*}
    Fix $k \in \bbN_0$. Assume that 
    \begin{align} \label{assume:induction_1}
        A_k \leq \dfrac{C_k - 1}{ C_k^{3/2} \sqrt{(S_k \vee 1)} }.
    \end{align}
    for some $C_k > 1$.
    Then, 
    \begin{align*}
        X_t \leq C_k S_k, \quad \forall t \in \{ \tau_k, \tau_k + 1, ..., \tau_{k+1} \}.
    \end{align*}
\end{lemma}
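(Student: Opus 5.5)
The plan is to unroll the recursion from $\tau_0$ up to a time $t$ in the block $\{\tau_k,\dots,\tau_{k+1}\}$. This should show that $X_t$ is bounded by $S_k$ plus only the superlinear contributions accumulated inside the current block. A strong induction on $t$ within the block, using \eqref{assume:induction_1}, then closes the argument. Everything is deterministic; no probabilistic input is needed.

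\textbf{Step 1 (telescoping).} For any $t \geq \tau_0$, summing the recursion $X_r \le X_{r-1} + a_{r-1}X_{r-1}^{3/2} + b_r$ over $r = \tau_0+1,\dots,t$ gives
\begin{align*}
X_t \le X_{\tau_0} + \sum_{r=\tau_0+1}^{t} b_r + \sum_{r=\tau_0+1}^{t} a_{r-1} X_{r-1}^{3/2}.
\end{align*}
By definition of $B_t$, the middle sum is at most $B_t$. Moreover, $t \mapsto B_t$ is non-decreasing, being a running maximum together with $0$. Hence for $t \le \tau_{k+1}$ we get $X_{\tau_0} + \sum_{r=\tau_0+1}^t b_r \le H_t \le H_{\tau_{k+1}}$. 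Splitting the last sum at $\tau_k$ then yields, for every $t \in \{\tau_k,\dots,\tau_{k+1}\}$,
\begin{align*}
X_t \le S_k + \sum_{s=\tau_k}^{t-1} a_s X_s^{3/2},
\end{align*}
where the sum is empty when $t = \tau_k$. Note that $S_k \ge 0$, since $X_{\tau_0} \ge 0$, $B_t \ge 0$, and $a_t \ge 0$.

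\textbf{Step 2 (induction inside the block).} The base case $t=\tau_k$ follows directly, since $X_{\tau_k} \le S_k \le C_k S_k$ because $C_k>1$. Now suppose $X_s \le C_k S_k$ for all $s \in \{\tau_k,\dots,t-1\}$. Step 1 and the fact that $\sum_{s=\tau_k}^{t-1} a_s \le A_k$ (all $a_s \geq 0$) give
\begin{align*}
X_t \le S_k + A_k C_k^{3/2} S_k^{3/2} \le S_k + (C_k-1)\frac{S_k^{3/2}}{\sqrt{S_k\vee 1}},
\end{align*}
using \eqref{assume:induction_1} for the second inequality. Finally, $S_k^{3/2}/\sqrt{S_k \vee 1} \le S_k$ holds in both cases $S_k\ge 1$ and $0 \le S_k<1$. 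Therefore $X_t \le S_k + (C_k-1)S_k = C_k S_k$, which completes the induction up to $t=\tau_{k+1}$.

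\textbf{Main obstacle.} The only delicate point is the bookkeeping in Step 1. First, the $b$-contributions up to any $t$ in the block must be dominated by the single quantity $H_{\tau_{k+1}}$, which is exactly why $B_t$ is defined as a running maximum of partial sums rather than the partial sum itself, since the $b_r$ may be negative. Second, the superlinear terms from earlier blocks are frozen into $S_k$, so that only the terms indexed by $s \in [\tau_k, t-1]$ remain. Since these use $X_s$ with $s<t$ in the block, they are exactly the ones controlled by the induction hypothesis.
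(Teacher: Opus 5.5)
Your proof is correct and is essentially the paper's argument. The paper argues by contradiction at the first time $t_{\ast}\in\{\tau_k,\dots,\tau_{k+1}\}$ with $X_{t_{\ast}} > C_k S_k$, which is equivalent to your strong induction, and it uses the same unrolled bound $X_t \le S_k + \sum_{s=\tau_k}^{t-1} a_s X_s^{3/2}$ (via monotonicity of $B_t$), \eqref{assume:induction_1}, and $S_k^{3/2}/\sqrt{S_k\vee 1}\le S_k$. Your uniform treatment of all $k$ is slightly cleaner than the paper's write-up, which splits off $k=0$ in the base case and only writes out the case $t_{\ast}>\tau_k$ for $k\in\bbN$.
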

\begin{proof}
    Let $C_k > 1$ satisfying \eqref{assume:induction_1} be given.
    For $k \in \bbN_0$, let $I_k = \{ \tau_k, \tau_k + 1, \tau_k + 2, ..., \tau_{k+1} \}$ and $t_{\ast} = t_{\ast}(k) = \inf\{ t \in I_k :  X_t > C_k S_k \}$. 
    Fix $k \in \bbN_0$. Let $t_{\ast} = t_{\ast}(k)$ for simplicity in notation.
    By contradiction, we will show that $t_{\ast} = \infty$.
    Suppose that $t_{\ast} \in I_k$. By the definition of $t_{\ast}$, we have $X_{t_{\ast}} > C_k S_k$ and
    \begin{align} \label{eqn:2_1}
        X_{t} \leq C_k S_k, \quad \forall t \in \{ \tau_k, ..., t_{\ast} - 1 \}.
    \end{align}    
    First, we cosider the case $t_{\ast} = \tau_k$. 
    If $k = 0$, it is easy to see that $X_{\tau_0} \leq S_0$ because $B_t$ and $a_{t} X_t^{3/2}$ are non-negative for any $t$. Hence, we now assume that $t_{\ast} = \tau_k$ and $k \in \bbN$.
    By the recursive formula, we have
    \begin{align*}
        X_{t_{\ast}} 
        &\leq 
        X_{\tau_0} 
        + \sum_{t = \tau_0 + 1}^{t_{\ast}} a_{t-1} X_{t-1}^{3/2} 
        + \sum_{t = \tau_0 + 1}^{t_{\ast}} b_t 
        =
        X_{\tau_0} 
        + \sum_{t = \tau_0 + 1}^{\tau_k} b_t
        + \sum_{t = \tau_0 + 1}^{\tau_k} a_{t-1} X_{t-1}^{3/2} \\
        &\leq
        X_{\tau_0} + B_{\tau_k}
        + \sum_{t = \tau_0 + 1}^{\tau_k} a_{t-1} X_{t-1}^{3/2} 
        \leq
        X_{\tau_0} + B_{\tau_{k+1}}
        + \sum_{t = \tau_0 + 1}^{\tau_k} a_{t-1} X_{t-1}^{3/2} \\
        &=
        H_{\tau_{k+1}}
        + \sum_{t = \tau_0 + 1}^{\tau_k} a_{t-1} X_{t-1}^{3/2} 
        =
        S_k \leq C_k S_k,
    \end{align*}
    where the second and third inequalities hold by the definition of $B_t$ and the monotonicity of $t \mapsto B_t$, respectively. 
    The last display contradicts $X_{t_{\ast}} > C_k S_k$. Hence, we only need to consider case $t_{\ast} > \tau_k$.
    Suppose that $k \in \bbN$ and $t_{\ast} > \tau_k$.
    As in the last display, we have
    \begin{align*}
        X_{t_{\ast}} 
        &\leq 
        X_{\tau_0} 
        + \sum_{t = \tau_0 + 1}^{t_{\ast}} a_{t-1} X_{t-1}^{3/2} 
        + \sum_{t = \tau_0 + 1}^{t_{\ast}} b_t \\
        &=
        X_{\tau_0} 
        + \sum_{t = \tau_0 + 1}^{t_{\ast}} b_t
        + \sum_{t = \tau_0 + 1}^{\tau_k} a_{t-1} X_{t-1}^{3/2} 
        + \sum_{t = \tau_k + 1}^{t_{\ast}} a_{t-1} X_{t-1}^{3/2} \\
        &\leq
        X_{\tau_0} + B_{t_{\ast}}
        + \sum_{t = \tau_0 + 1}^{\tau_k} a_{t-1} X_{t-1}^{3/2} 
        + \sum_{t = \tau_k + 1}^{t_{\ast}} a_{t-1} X_{t-1}^{3/2} \\
        &\leq
        X_{\tau_0} + B_{\tau_{k+1}}
        + \sum_{t = \tau_0 + 1}^{\tau_k} a_{t-1} X_{t-1}^{3/2} 
        + \sum_{t = \tau_k + 1}^{t_{\ast}} a_{t-1} X_{t-1}^{3/2} \\
        &=
        H_{\tau_{k+1}}
        + \sum_{t = \tau_0 + 1}^{\tau_k} a_{t-1} X_{t-1}^{3/2} 
        + \sum_{t = \tau_k + 1}^{t_{\ast}} a_{t-1} X_{t-1}^{3/2}
        =
        S_k
        + \sum_{t = \tau_k + 1}^{t_{\ast}} a_{t-1} X_{t-1}^{3/2},
    \end{align*}
    Note that
    \begin{align*}
        \sum_{t = \tau_k + 1}^{t_{\ast}} a_{t-1} X_{t-1}^{3/2}
        \overset{\eqref{eqn:2_1}}{\leq}
        \sum_{t = \tau_k + 1}^{t_{\ast}} a_{t-1} \big( C_k S_k \big)^{3/2}
        \leq   
        \big( C_k S_k \big)^{3/2} \sum_{t = \tau_k}^{\tau_{k+1}} a_{t} 
        =
        \big( C_k S_k \big)^{3/2} A_k.
    \end{align*}
    By the last display, we have
    \begin{align*}
        X_{t_{\ast}} 
        \leq S_k + \big( C_k S_k \big)^{3/2} A_k.
    \end{align*}
    By the construction, we have $S_k \geq 0$.
    Also, by the assumption \eqref{assume:induction_1}, we have
    \begin{align*}
        \big( C_k S_k \big)^{3/2} A_k 
        \leq 
        \big( C_k S_k \big)^{3/2}
        \dfrac{C_k - 1}{ C_k^{3/2} \sqrt{(S_k \vee 1)} }
        =
        (C_k - 1)\dfrac{ S_k^{3/2} }{ \sqrt{(S_k \vee 1)} }
        \leq 
        (C_k - 1)S_k.
    \end{align*}
    It follows that
    \begin{align*}
        X_{t_{\ast}} 
        \leq 
        S_k + \big( C_k S_k \big)^{3/2} A_k
        \leq 
        S_k + (C_k - 1)S_k
        =
        C_k S_k,
    \end{align*}
    which contradicts $X_{t_{\ast}} > C_k S_k$. This completes the proof.
\end{proof}

\begin{proposition}[Dyadic recursion bound] \label{prop:induction}
    Let $\tau_{\ast} \in \bbN \cup \{ \infty \}$ with $\tau_{\ast} > \tau_0$, where
    \begin{align*}
        \tau_0 \geq \lceil M(p \log (p \vee 3) + \rmx) \rceil
    \end{align*}
    for some $M > 0$ and $\rmx > 0$.
    Let $(\tau_k)_{k \in \bbN_0}$ be a sequence in $\bbN$ such that $\tau_{k} = (2^{k} \tau_0) \wedge \tau_{\ast}$ for all $k \in \bbN_0$.
    Let $(X_t)_{t \in \bbN_0}$ be a non-negative sequence satisfying
    \begin{align*}
        X_t \leq X_{t-1} + a_{t-1} X_{t-1}^{3/2} + b_{t}, \quad \forall t \in \bbN \text{ with } \tau_0 < t \leq \tau_{\ast},
    \end{align*}
    where $a_t \in \bbR_+$ and $b_t \in \bbR \ $ for all $\tau_0 \leq t \leq \tau_{\ast}$.
    For $s, t \in \bbN$ with $\tau_0 \leq s < t \leq \tau_{\ast}$, suppose that
    \begin{align*}
        X_{\tau_0} \leq D_0 \left( p + \rmx  \right), \quad 
        \sum_{r = s}^{t} a_r \leq D_1 s^{-1/2}, \quad 
        \sum_{r = \tau_0+1}^{t} b_r \leq D_2  p \log t + D_3 ( \rmx + \log t ),
    \end{align*}
    where $D_0, D_1, D_2, D_3 > 0$. Then, there exists $M_0 = M_0(D_0, D_1, D_2, D_3) > 0$ such that if $M \geq M_0$,
    \begin{align*}
        X_t \leq K \left( p \log t + \rmx  \right), \quad \forall t \in \bbN \text{ with } \tau_0 \leq t \leq \tau_{\ast},
    \end{align*}
    where $K = K(D_0, D_2, D_3)> 0$.
\end{proposition}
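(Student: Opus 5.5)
We will prove Proposition \ref{prop:induction} by applying Lemma \ref{lemma:induction_lemma} block by block along the dyadic times $\tau_k = (2^k\tau_0)\wedge\tau_\ast$, inducting on $k$ and using a single fixed constant, say $C_k = 2$.

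\textbf{Inductive hypothesis.} For $k$ with $\tau_k<\tau_\ast$ (the final block ends at $\tau_\ast$) we assume
\begin{align*}
X_t \le K_0\,(p\log \tau_{k} + \rmx), \quad \tau_0\le t\le \tau_k .
\end{align*}
Here $K_0 = K_0(D_0,D_2,D_3)$ is to be chosen.

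\textbf{Bounding $S_k$.} Three ingredients enter.
\begin{itemize}
\item $H_{\tau_{k+1}} \le D_0(p+\rmx) + D_2 p\log\tau_{k+1} + D_3(\rmx+\log\tau_{k+1})$. This follows from the hypothesis on partial sums of $b_r$, since the maximum over $s\le\tau_{k+1}$ of $D_2p\log s + D_3(\rmx+\log s)$ is attained at $s=\tau_{k+1}$.
\item Since $\tau_0\ge 3$, we have $\log t\le p\log t$. Hence $H_{\tau_{k+1}}\le c(p\log\tau_{k+1}+\rmx)$ with $c=c(D_0,D_2,D_3)$.
\item The accumulated superlinear part $\sum_{t=\tau_0+1}^{\tau_k} a_{t-1}X_{t-1}^{3/2}$ is controlled by splitting it over the previous blocks $j<k$. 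On block $j$ we use $\sum_{r=\tau_j}^{\tau_{j+1}} a_r\le D_1\tau_j^{-1/2}$ and $X\le K_0(p\log\tau_{j+1}+\rmx)$. The sum is then at most
\begin{align*}
\sum_{j<k} D_1 (2^j\tau_0)^{-1/2} K_0^{3/2}(p\log \tau_{j+1}+\rmx)^{3/2}.
\end{align*}
\end{itemize}
Write $\tau_{j+1}\le 2^{j+1}\tau_0$, so $p\log\tau_{j+1}+\rmx\le p(j+1)\log 2 + p\log\tau_0+\rmx$. The factor $2^{-j/2}$ beats any polynomial in $j$, so the series is bounded by
\begin{align*}
c' D_1 K_0^{3/2}\,\tau_0^{-1/2}(p\log\tau_0+\rmx)^{3/2}
\le c' D_1 K_0^{3/2}\,\sqrt{\tfrac{p\log\tau_0+\rmx}{\tau_0}}\,(p\log\tau_k+\rmx).
\end{align*}
The ratio $(p\log\tau_0+\rmx)/\tau_0$ is at most roughly $1/M$ times a constant, because $\tau_0\ge M(p\log(p\vee 3)+\rmx)$ and $\tau\mapsto \tau^{-1}\log \tau$ is decreasing. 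Choosing $M_0$ large (depending on $K_0$ and $D_1$) makes this term at most $p\log\tau_{k+1}+\rmx$. Thus $S_k\le (c+1)(p\log\tau_{k+1}+\rmx)$.

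\textbf{Checking \eqref{assume:induction_1}.} With $C_k=2$ we need $A_k\le 2^{-3/2}(S_k\vee1)^{-1/2}$. We have $A_k\le D_1\tau_k^{-1/2}$. We also have $S_k\lesssim p\log\tau_{k+1}+\rmx\le p\log 2+ p\log\tau_k+\rmx$, and $(p\log\tau_k+\rmx)/\tau_k\le (p\log\tau_0+\rmx)/\tau_0\lesssim 1/M$. Therefore $A_k\sqrt{S_k\vee1}\lesssim D_1\sqrt{(c+1)/M}$, which is at most $2^{-3/2}$ once $M\ge M_0$.

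\textbf{Closing the induction.} Lemma \ref{lemma:induction_lemma} then gives $X_t\le 2(c+1)(p\log\tau_{k+1}+\rmx)$ on $\{\tau_k,\dots,\tau_{k+1}\}$. Setting $K_0=2(c+1)$, which depends only on $D_0,D_2,D_3$, propagates the hypothesis. The base case $k=0$ is covered by Lemma \ref{lemma:induction_lemma} with an empty superlinear sum. Finally, for $t\in[\tau_k,\tau_{k+1}]$ we have $\log\tau_{k+1}\le\log(2t)\le 2\log t$, which gives $X_t\le K(p\log t+\rmx)$ with $K=2K_0$.

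The main obstacle is the circularity in the choice of constants. $M_0$ must depend on $K_0$ through $K_0^{3/2}$ in the bound on the accumulated superlinear sum. So $K_0$ must be fixed first, from $D_0,D_2,D_3$ alone, independently of $M$. We then verify that the geometric decay $2^{-j/2}$ keeps the accumulated superlinear sum from growing with $k$, so that no dependence on $k$ leaks into $K_0$.
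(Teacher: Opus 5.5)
Your proof is correct. It shares the paper's architecture: dyadic blocks, Lemma~\ref{lemma:induction_lemma} applied on each block, induction on $k$, and the geometric decay $A_j \le D_1 2^{-j/2}\tau_0^{-1/2}$. The bookkeeping, however, is genuinely different.

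\textbf{The paper's route.} It uses shrinking constants $C_k = 1 + 2^{-k/4}/7$ with $P_\infty=\prod_k C_k \le 2.5$. It propagates the self-referential invariant $S_k \le P_k H_{\tau_{k+1}}$ via $S_{k+1} \le C_k S_k + H_{\tau_{k+2}} - H_{\tau_{k+1}}$. Each block's superlinear contribution is absorbed multiplicatively into $S_k$ and is never summed explicitly. The price is checking $A_k\sqrt{S_k\vee 1}\lesssim 2^{-k/4}$ against a threshold that shrinks with $k$.

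\textbf{Your route.} You fix $C_k=2$ and sum the accumulated superlinear terms directly, using the bounds already established on earlier blocks. Since $\sum_j 2^{-j/2}(j+2)^{3/2}<\infty$, the total is a small multiple of $p\log\tau_0+\rmx$, uniformly in $k$. This gives a constant threshold and no product of $C_k$'s. It requires fixing $K_0=K_0(D_0,D_2,D_3)$ before $M_0$, because of the factor $K_0^{3/2}$. It yields $X_t\le 2H_{\tau_{k+1}}$ plus an additive term at the a priori scale $p\log\tau_0+\rmx$. The paper instead obtains the scale-free bound $X_t\le P_\infty H_{\tau_{k+1}}$ directly in terms of the driver $H$.

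Two small points to tighten:
\begin{itemize}
\item Your inductive hypothesis must be the blockwise one: $X_t\le K_0(p\log\tau_{j+1}+\rmx)$ for $t\in\{\tau_j,\dots,\tau_{j+1}\}$ and all $j<k$. This is what you actually use, and what your closing step produces. The uniform version you wrote, $X_t\le K_0(p\log\tau_k+\rmx)$ on $\{\tau_0,\dots,\tau_k\}$, only bounds the superlinear sum by a multiple of $\tau_0^{-1/2}(p\log\tau_k+\rmx)^{3/2}$. That cannot be absorbed uniformly in $k$.
\item The ratio $(p\log\tau_0+\rmx)/\tau_0$ is of order $(1+\log M)/M$, not $1/M$; take $p=1$ and $\tau_0\approx M\log 3$. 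It still tends to zero, which is all you need.
\end{itemize}
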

\begin{proof}
    In this proof, we assume that $M_0 = M_0(D_0, D_1, D_2, D_3) > 0$ is large enough.    
    For $t \in \bbN$ with $t > \tau_0$, let
    \begin{align*}
        B_t = 0 \vee \left( \max_{\tau_0 \leq s \leq t} \sum_{r = \tau_0 + 1}^{s} b_r \right), \quad
        H_t = X_{\tau_0} + B_t, \quad 
        \overline B(t) = D_2  p \log t + D_3 ( \rmx + \log t ).       
    \end{align*}
    One can easily check that $B_t \leq \overline B(t)$ for any $t \in \bbN$ with $t > \tau_0$.
    For $k \in \bbN_0$, let
    \begin{align*}
        A_k = \sum_{t = \tau_k}^{\tau_{k+1}} a_t, \quad 
        S_k = H_{\tau_{k+1}} + \sum_{t= \tau_0 + 1}^{\tau_k} a_{t-1} X_{t-1}^{3/2}, \quad 
        C_k = 1 + \frac{2^{-k/4}}{7}, \quad 
        P_k = \prod_{j=0}^{k-1} C_j, 
    \end{align*}
    Here, we set $P_0 = 1$ and $S_0 = H_{\tau_1}$.
    For any $k \in \bbN_0$, note that
    \begin{align}
    \begin{aligned} \label{eqn:3_0}
        A_k &\leq D_1 \tau_k^{-1/2}, \\
        P_k &\leq P_{\infty} \overset{\rm def}{=} \prod_{j=0}^{\infty} C_j \leq 2.5,
    \end{aligned}        
    \end{align}
    where the first inequality holds by the assumption.

\noindent \textbf{Step 1:} 
We will show that if  
\begin{align} \label{eqn:3_1}
    A_{k} \leq \dfrac{C_{k} - 1}{ C_{k}^{3/2} \sqrt{S_{k} \vee 1} }, \quad 
    S_{k} \leq P_k H_{\tau_{k+1}}
\end{align}
for a given $k \in \bbN_0$ with $\tau_k < \tau_{k+1}$, then
\begin{align} 
\begin{aligned} \label{eqn:3_2}
    X_{t} \leq C_k S_k \leq c_0 \big( p \log t + \rmx \big), \quad \forall t \in I_k,
\end{aligned}    
\end{align}
where $I_k = \{ \tau_k, ..., \tau_{k+1} \}$ and $c_0 = c_0(D_0, D_2, D_3) > 0$.
Suppose that \eqref{eqn:3_1} holds for a given $k \in \bbN_0$ with $\tau_k < \tau_{k+1}$.
Then, Lemma \ref{lemma:induction_lemma} implies that for any $t \in \{ \tau_{k}, ..., \tau_{k+1} \}$
\begin{align*}
    X_t \leq C_k S_k,
\end{align*}
which completes the first inequality in \eqref{eqn:3_2}. 
By the assumption, for any $t > \tau_0$,
\begin{align} \label{eqn:3_2_0}
    \overline B(t) \leq c_1 \big( p \log t + \rmx \big), \quad
    X_{\tau_0} \leq D_0 \big( p + \rmx \big),
\end{align}
where $c_1 = c_1(D_2, D_3) > 0$.
Also,
\begin{align}
\begin{aligned} \label{eqn:3_2_1}
    C_k S_k
    \overset{\eqref{eqn:3_1}}&{\leq} C_{k} P_k H_{\tau_{k+1}}
    = P_{k+1} H_{\tau_{k+1}}
    \leq P_{\infty} H_{\tau_{k+1}}
    = P_{\infty} \big( X_{\tau_0} + B_{\tau_{k+1}} \big) \\
    &\leq P_{\infty} \big( X_{\tau_0} + \overline B(\tau_{k+1}) \big)  
    \leq P_{\infty} \big( X_{\tau_0} + \overline B(2t) \big)    
    \overset{\eqref{eqn:3_0}}{\leq} 2.5 \big( X_{\tau_0} + \overline B(2t) \big) \\
    &\leq 
    c_0 \big( p \log t + \rmx \big),
\end{aligned}    
\end{align}
where $c_0 = c_0(c_1, D_0) = c_0(D_0, D_2, D_3) > 0$, and the fourth inequality holds by the monotonicity of $t \mapsto \overline B(t)$ and $\tau_{k+1} \leq 2t$.
This completes the proof of the two assertions in \eqref{eqn:3_2}.
\bigskip

\noindent \textbf{Step 2:}
Let $k_{\ast}  = \inf\{k \in \bbN : \tau_{\ast} \leq 2^k \tau_0 \}$ so that $\tau_{\ast} = \tau_{k_{\ast}}$ if $k_{\ast} < \infty$.
By induction, we will show that 
\begin{align} \label{eqn:3_3}
    A_{k} \leq \dfrac{C_{k} - 1}{ C_{k}^{3/2} \sqrt{S_{k} \vee 1} }, \quad 
    S_{k} \leq P_k H_{\tau_{k+1}}, \quad 
    \forall k \in \bbN_0 \text{ with } k \leq k_{\ast}-1.
\end{align}
Since $\tau_{k} < \tau_{k+1}$ for all $k \leq k_{\ast}-1$, we can apply Lemma \ref{lemma:induction_lemma}.
Even if $k_{\ast} = \infty$, the assertion in \eqref{eqn:3_3} is understood to hold for all $k \in \bbN_0$.
Let $k = 0$. 
Note that
\begin{align} \label{eqn:3_4}
    S_0 
    = H_{\tau_1}
    = P_0 H_{\tau_1}.
\end{align}
Also,
\begin{align}
\begin{aligned} \label{eqn:3_4_1}
    S_0 \vee 1
    &\leq H_{\tau_1} + 1
    \leq X_{\tau_0} + B_{\tau_1} + 1
    \leq X_{\tau_0} + \overline B(\tau_1) + 1 \\
    &\leq D_0 (p + \rmx) + c_1(p \log \tau_1 + \rmx) + 1 
    \leq  c_2^2 \big( p \log \tau_{1} + \rmx \big) \\
    &\leq c_2^2 \big( p \log 2 + p \log \tau_{0} + \rmx \big),
\end{aligned}    
\end{align}
where $c_2 = c_2(D_0, c_1) = c_2(D_0, D_2, D_3) > 0$.
Note that
\begin{align*}
    &A_0 \sqrt{S_0 \vee 1}
    \leq 
    D_1 \tau_0^{-1/2}
    \big\{ c_2^2 \big( p \log 2 + p \log \tau_{0} + \rmx \big) \big\}^{1/2} \\
    &\qquad \leq 
    c_2 D_1 
    \left( \sqrt{\dfrac{p \log 2}{\tau_0}} + \sqrt{\dfrac{p \log \tau_0}{\tau_0}} + \sqrt{\dfrac{\rmx}{\tau_0}}  \right) 
    \leq 
    c_2 D_1 
    \left( M^{-1/2} + \sqrt{\dfrac{p \log \tau_0}{\tau_0}} + M^{-1/2} \right) \\
    &\qquad \leq 
    c_3 M^{-1/4}
    \leq 
    c_3 M_0^{-1/4}
\end{align*}
where $c_3 = c_3(c_2, D_1) = c_3(D_0, D_1, D_2, D_3) > 0$. Also,
\begin{align*}
    \dfrac{C_0 - 1}{C_0^{3/2}}
    =
    7^{-1} (1 + 7^{-1})^{-3/2}
    \geq 0.1.
\end{align*}
It follows that
\begin{align} \label{eqn:3_5}
    A_0 \sqrt{S_0 \vee 1}
    \leq 
    c_3 M_0^{-1/4}
    \leq 
    0.1
    \leq 
    \dfrac{C_0 - 1}{C_0^{3/2}},
\end{align}
where the second inequality holds by large enough $M_0 = M_0(c_3) = M_0(D_0, D_1, D_2, D_3)$. By \eqref{eqn:3_4} and \eqref{eqn:3_5}, \eqref{eqn:3_3} holds for $k = 0$.

\noindent \textbf{Step 3:} Let $k \in \bbN_0$ with $k \leq k_{\ast} - 2$. 
By \textbf{Step 1}, it is sufficient to consider the case where $k \leq k_{\ast} - 2$.
Suppose that 
\begin{align} \label{eqn:3_6_0}
    A_{k} \leq \dfrac{C_{k} - 1}{ C_{k}^{3/2} \sqrt{S_{k} \vee 1} }, \quad 
    S_{k} \leq P_{k} H_{\tau_{k+1}}.
\end{align}
By \eqref{eqn:3_1}, \eqref{eqn:3_2} and \eqref{eqn:3_6_0}, we have
\begin{align} \label{eqn:3_6}
    X_{t} \leq C_{k} S_{k} \leq c_0 \big( p \log t + \rmx \big), \quad \forall t \in I_{k}.
\end{align}
Also, by \eqref{eqn:3_0}, we have
\begin{align} \label{eqn:3_6_1}
    A_{k+1} \leq D_1 \tau_{k+1}^{-1/2} \leq D_1 2^{-(k+1)/2} \tau_0^{-1/2}.
\end{align}
First, we will show that $S_{k+1} \leq P_{k+1} H_{\tau_{k+2}}$.
Note that
\begin{align*}
    S_{k} - H_{\tau_{k+1}} &= \sum_{t = \tau_0 + 1}^{\tau_{k}} a_{t-1} X_{t-1}^{3/2}, \\
    \sum_{t = \tau_{k} + 1}^{\tau_{k+1}} a_{t-1} X_{t-1}^{3/2} 
    \overset{\eqref{eqn:3_6}}&{\leq}
    \sum_{t = \tau_{k} + 1}^{\tau_{k+1}} a_{t-1} \big( C_k S_k \big)^{3/2} 
    \leq A_{k} \big( C_k S_k \big)^{3/2}
    \overset{\eqref{eqn:3_6_0}}{\leq}
    (C_k - 1)S_k,
\end{align*}
where the first equality holds by the definition of $S_{k}$.
Then, we have
\begin{align*}
    S_{k+1} 
    &= H_{\tau_{k+2}} + \sum_{t = \tau_0 + 1}^{\tau_{k+1}} a_{t-1} X_{t-1}^{3/2}
    = H_{\tau_{k+2}} + \big( S_{k} - H_{\tau_{k+1}} \big) + \sum_{t = \tau_{k} + 1}^{\tau_{k+1}} a_{t-1} X_{t-1}^{3/2} \\
    &\leq 
    H_{\tau_{k+2}} + \big( S_{k} - H_{\tau_{k+1}} \big) + (C_k - 1)S_k
    = 
    C_k S_k + H_{\tau_{k+2}} - H_{\tau_{k+1}}. 
\end{align*}
Combining the last display with \eqref{eqn:3_6_0}, we have
\begin{align}
\begin{aligned} \label{eqn:3_6_2}
    S_{k+1} 
    &\leq C_k \big( P_{k} H_{\tau_{k+1}} \big) + H_{\tau_{k+2}} - H_{\tau_{k+1}}
    = P_{k+1} H_{\tau_{k+1}} + H_{\tau_{k+2}} - H_{\tau_{k+1}} \\
    &= \big( P_{k+1} - 1 \big)  H_{\tau_{k+1}} + H_{\tau_{k+2}}
    \leq \big( P_{k+1} - 1 \big) H_{\tau_{k+2}} + H_{\tau_{k+2}}
    = P_{k+1} H_{\tau_{k+2}}.
\end{aligned}
\end{align}
Next, we will show that 
\begin{align*}
    A_{k+1} \leq \dfrac{C_{k+1} - 1}{ C_{k+1}^{3/2} \sqrt{S_{k+1} \vee 1} }.
\end{align*}
As in \eqref{eqn:3_4_1}, we have
\begin{align*}
    S_{k+1} \vee 1
    \overset{\eqref{eqn:3_6_2}}&{\leq}
    P_{k+1} H_{\tau_{k+2}} + 1
    \leq 
    P_{\infty} H_{\tau_{k+2}} +1
    \leq 
    2.5 \big( X_{\tau_0} + B_{\tau_{k+2}} \big) + 1 \\
    &\leq 
    2.5 \big( X_{\tau_0} + \overline B(\tau_{k+2}) \big) + 1 
    \leq
    c_4^2 \big( p \log \tau_{k+2} + \rmx \big),
\end{align*}
where $c_4 = c_4(D_0, c_1) = c_4(D_0, D_2, D_3) > 0$.
For any $k \le k_{\ast}-2$, it holds that
\begin{align*}
    \log \tau_{k+2} \leq (k+2)\log 2 + \log \tau_0.
\end{align*}
By combining the last two displays with \eqref{eqn:3_6_1}, the following bound holds:
\begin{align*}
    &A_{k+1} \sqrt{S_{k+1} \vee 1}
    \leq 
    D_1 2^{-(k+1)/2} \tau_0^{-1/2}
    \big\{ c_4 \big( p \log \tau_{k+2} + \rmx \big)^{1/2} \big\} \\
    &\qquad \leq
    D_1 2^{-(k+1)/2} \tau_0^{-1/2}
    \Big\{ c_4 \sqrt{ p(k+2)\log2 + p\log \tau_0 + \rmx} \Big\} \\
    &\qquad \leq
    c_4 D_1 2^{-(k+1)/2} \tau_0^{-1/2}
    \big( \sqrt{p(k+2)\log2} + \sqrt{p\log \tau_0} + \sqrt{\rmx} \big) \\
    &\qquad \leq
    c_4 D_1 2^{-(k+1)/4} \left( 2^{-(k+1)/4} \sqrt{k+2}  \right) \tau_0^{-1/2}
    \big( \sqrt{ p \log 2} + \sqrt{p \log \tau_0} + \sqrt{\rmx} \big) \\
    &\qquad \leq
    c_5 D_1 2^{-(k+1)/4} \left( \sqrt{\dfrac{p\log \tau_0}{\tau_0}} + \sqrt{\dfrac{\rmx}{\tau_0}} \right)
    \leq 
    c_6 2^{-(k+1)/4} M^{-1/4}
    \leq 
    c_6 2^{-(k+1)/4} M_0^{-1/4},
\end{align*}
where $c_5 = c_5(c_4) = c_5(D_0, D_2, D_3) > 0$ and $c_6 = c_6(D_1, c_5) = c_6(D_0, D_1, D_2, D_3) > 0$.
Also,
\begin{align*}
    \dfrac{C_{k+1} - 1}{C_{k+1}^{3/2}}
    =
    \dfrac{7^{-1} 2^{-(k+1)/4}}{(1 + 7^{-1}2^{-(k+1)/4})^{3/2}}
    \geq 
    c_7 2^{-(k+1)/4},
\end{align*}
where $c_7 > 0$ is a universal constant. It follows that
\begin{align*}
    A_{k+1} \sqrt{S_{k+1} \vee 1}
    \leq c_6 2^{-(k+1)/4} M_0^{-1/4}
    \leq c_7 2^{-(k+1)/4}
    \leq \dfrac{C_{k+1} - 1}{C_{k+1}^{3/2}},
\end{align*}
where the second inequality holds by large enough $M_0 = M_0(c_6, c_7) = M_0(D_0, D_1, D_2, D_3)$. 
Combining the last display and \eqref{eqn:3_6_2}, we have 
\begin{align*}
    A_{k+1} \leq \dfrac{C_{k+1} - 1}{ C_{k+1}^{3/2} \sqrt{S_{k+1} \vee 1} }, \quad 
    S_{k+1} \leq P_{k+1} H_{\tau_{k+2}},
\end{align*}
which completes the proof of \eqref{eqn:3_3}.
Combining \eqref{eqn:3_1}, \eqref{eqn:3_2} and \eqref{eqn:3_3}, therefore, we have
\begin{align*}
    X_{t} \leq c_0 \big( p \log t + \rmx \big), \quad \forall t \in \bbN \text{ with } \tau_0 \leq t \leq \tau_{\ast},
\end{align*}
which completes the proof.
\end{proof}


\section{Theoretical justifications for assumptions} \label{sec:assume_example}

In this subsection, we follows settings and notations in \ref{sec:verification}

\subsection{Example: logistic regression model}

Now, we will demonstrate that \eqref{A1} is mild assumption under the logistic regression model. 
For $t \in \bbN$ and $\theta \in \bbR^p$, note that
\begin{align*}
    \nabla^2 L_{1:t}(\theta) 
    = \sum_{s=1}^{t} \nabla^2 \ell_{s}(\theta)
    = \sum_{s=1}^{t} b''(x_s^{\top} \theta) x_s x_s^{\top}.
\end{align*}

\begin{proposition}[Verification of \eqref{A1}] \label{prop:A1}
    For $\tau > 0$ and $t \in \bbN$, let $I_{t}(\tau) = \{ s \in [t] : |x_s^{\top} \theta_{\star}| \leq \tau \}$.
    Assume that there exist some constants $D_1, D_2, D_3 > 0$ such that for any $t \in \bbN$ with $t \geq D_1 p$
    \begin{align*}
        \lambda_{\min} \left( \sum_{s \in I_{t}(D_2) } x_s x_s^{\top} \right) \geq D_3 t.
    \end{align*}
    Assume further that $\max_{t \in \bbN} \| x_t \|_2 \cdot r \leq D_4$ for some constant $D_4 > 0$.
    Then, we have, for any $t \in \bbN$ with $t \geq D_1 p$,
    \begin{align*}
        \inf_{ \{ \overline \theta_s \}_{s=1}^{t} \subset \Theta_{r} } 
        \lambda_{\min} \left( \sum_{s=1}^{t} \nabla^2 \ell_{s}( \overline \theta_s ) \right)
        \geq 
        K t,
    \end{align*}
    where $K = K(D_2, D_3, D_4) > 0$.
\end{proposition}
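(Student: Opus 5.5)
The plan is to bound each Hessian term from below by restricting to the index set $I_t(D_2)$, and then to control $b''$ uniformly on the relevant range. For the logistic loss, $\nabla^2 \ell_s(\theta) = b''(x_s^{\top}\theta)\, x_s x_s^{\top}$, and each summand is positive semi-definite. Hence, for any collection $\{\overline \theta_s\}_{s=1}^{t} \subset \Theta_r$,
\begin{align*}
    \sum_{s=1}^{t} \nabla^2 \ell_s(\overline \theta_s) \succeq \sum_{s \in I_t(D_2)} b''(x_s^{\top} \overline \theta_s)\, x_s x_s^{\top}.
\end{align*}
Dropping the indices outside $I_t(D_2)$ is legitimate precisely because the discarded terms are positive semi-definite.

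Next, I will lower bound $b''(x_s^{\top}\overline\theta_s)$ uniformly over $s \in I_t(D_2)$ and $\overline\theta_s \in \Theta_r$. Write $x_s^{\top}\overline\theta_s = x_s^{\top}\theta_{\star} + x_s^{\top}(\overline\theta_s - \theta_{\star})$. By Cauchy--Schwarz and the assumption $\max_t \|x_t\|_2 \, r \le D_4$, the second term is at most $\|x_s\|_2\, r \le D_4$ in absolute value. Therefore $|x_s^{\top}\overline\theta_s| \le D_2 + D_4$.

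The function $b''(u) = e^u/(1+e^u)^2$ is even and decreasing in $|u|$. It follows that
\begin{align*}
    b''(x_s^{\top}\overline\theta_s) \ge b''(D_2 + D_4) =: c(D_2, D_4) > 0, \quad \forall s \in I_t(D_2).
\end{align*}
For instance, one may use the explicit bound $b''(u) \ge e^{-|u|}/4$.

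Combining these two steps gives
\begin{align*}
    \sum_{s=1}^{t} \nabla^2 \ell_s(\overline\theta_s) \succeq c \sum_{s \in I_t(D_2)} x_s x_s^{\top}.
\end{align*}
Monotonicity of $\lambda_{\min}$ under the Loewner order, together with the assumed eigenvalue bound for $t \ge D_1 p$, then yields $\lambda_{\min} \ge c D_3 t$. Taking the infimum over the collection $\{\overline\theta_s\}$ gives the claim with $K = D_3\, b''(D_2 + D_4)$.

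There is no genuine obstacle in this argument. The only points needing care are that the lower bound on $b''$ must be uniform in the arbitrary, $s$-dependent points $\overline\theta_s$, which the Cauchy--Schwarz step handles, and that the restriction to $I_t(D_2)$ must be justified, which positive semi-definiteness of the dropped terms provides.
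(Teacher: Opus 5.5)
Your proof is correct, but it takes a slightly different route from the paper's. The paper first compares each Hessian to the Hessian at the truth using the self-concordance of the logistic loss (Lemmas \ref{lemma:logit_self_concordance} and \ref{lemma:smooth_hessian_logit}). This gives $\nabla^2 \ell_s(\overline\theta_s) \succeq e^{-\sqrt{6} D_4}\, \nabla^2 \ell_s(\theta_{\star})$ for every $s$. Only then does the paper restrict to $I_t(D_2)$ and use $b''(x_s^{\top}\theta_{\star}) \ge b''(D_2)$, arriving at $K = e^{-\sqrt{6}D_4}\, b''(D_2)\, D_3$.

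You instead restrict to $I_t(D_2)$ first and bound $b''$ directly at the evaluation point. You do this through $|x_s^{\top}\overline\theta_s| \le D_2 + D_4$ and the fact that $b''(u) = 1/(2+2\cosh u)$ decreases in $|u|$. Your argument is more elementary and self-contained, and no self-concordance lemma is needed. Your constant $D_3\, b''(D_2+D_4)$ is also never worse than the paper's: since $|b'''/b''| \le 1$, we have $b''(D_2+D_4) \ge e^{-D_4}\, b''(D_2) \ge e^{-\sqrt{6}D_4}\, b''(D_2)$.

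The price is that your argument uses the single-index structure $\nabla^2\ell_s(\theta) = b''(x_s^{\top}\theta)\, x_s x_s^{\top}$ in an essential way. The paper's comparison step holds for any self-concordant-like loss. It reduces the bound in \eqref{A1}, which must hold uniformly over trajectories, to a lower bound on the Fisher information $\bF_{t,\theta_{\star}}$ at the true parameter. It also reuses the same lemma the paper needs anyway for \eqref{A2} and \eqref{A3-b}.
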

\begin{proof}
    Let $\overline \theta \in \Theta_r$. 
    By the definition of $\Theta_r$ specified in \eqref{def:local_set}, we have $\| \overline \theta - \theta_{\star} \|_2 \leq r$.
    By Lemmas \ref{lemma:logit_self_concordance} and \ref{lemma:smooth_hessian_logit}, we have for any $t \in \bbN$
    \begin{align*}
        \nabla^2 \ell_t( \overline \theta )
        \succeq
        \exp \big( - \sqrt{6} \| x_t \|_2 \| \overline \theta - \theta \|_2 \big)
        \nabla^2 \ell_t( \theta_{\star} )
        \succeq
        e^{- \sqrt{6} D_4}
        \nabla^2 \ell_t( \theta_{\star} )
        =
        c_1 \nabla^2 \ell_t( \theta_{\star} ),
    \end{align*}
    where $c_1 = c_1(D_4) > 0$.     
    Let $t \in \bbN$ with $t \geq D_1 p$ and $\{ \overline \theta_s \}_{s=1}^{t} \subset \Theta_{r}$.
    By the last display, we have
    \begin{align*}
        \sum_{s=1}^{t} \nabla^2 \ell_{s}( \overline \theta_s )
        \succeq
        c_1 \sum_{s=1}^{t} \nabla^2 \ell_{s}( \theta_{\star} ).
    \end{align*}
    Recall that $I_{t}(D_2) = \{ s \in [t] : |x_s^{\top} \theta_{\star}| \leq D_2 \}$.
    Then, we have for any $s \in I_{t}(D_2)$
    \begin{align*}
        b''(x_{s}^{\top} \theta_{\star})
        \geq 
        b''(D_2) 
        = e^{D_2}/(1 + e^{D_2})^2
        \overset{\rm def}{=} c_2,
    \end{align*}
    where the inequality holds by the symmetry and monotonicity of $b''(\cdot)$ in the logistic regression model.    
    \begin{align*}
        \lambda_{\min} \left(  \sum_{s=1}^{t} \nabla^2 \ell_{s}( \theta_{\star} ) \right)
        &=
        \lambda_{\min} \left(  \sum_{s=1}^{t} b''( x_s^{\top} \theta_{\star}) x_s x_s^{\top}  \right)
        \geq 
        \lambda_{\min} \left(  \sum_{s \in I_t(D_2) } b''(x_s^{\top} \theta_{\star}) x_s x_s^{\top}  \right) \\
        &\geq c_2 \lambda_{\min} \left(  \sum_{s \in I_t(D_2) } x_s x_s^{\top} \right)
        \geq c_2 D_3 t.
    \end{align*}
    It follows that
    \begin{align*}
        \lambda_{\min} \left( \sum_{s=1}^{t} \nabla^2 \ell_{s}( \overline \theta_s ) \right)
        \geq 
        c_1 c_2 D_3 t = c_3 t,
    \end{align*}
    where $c_3 = c_3(c_1, c_2, D_3) = c_3(D_2, D_3, D_4) > 0$. Since $\{ \overline \theta_s \}_{s=1}^{t} \subset \Theta_{r}$ is arbitrary, we complete the proof.
\end{proof}

\begin{proposition}[Verifiaction of \eqref{A2}] \label{prop:A2}
    Let $t \in \bbN$ and $\theta_1, \theta_2 \in \Theta_r$. 
    Assume that there exist some constants $C_7, C_8 > 0$ such that
    \begin{align*}
        \max_{t \in \bbN} \| x_t \|_2 \cdot r \leq C_7, \quad 
        \max_{t \in \bbN} \| x_t \|_2 \leq C_8.
    \end{align*}
    Then, there exists $\tilde \theta = \tilde \theta(\theta_1, \theta_2) \in \Theta_r$ such that
    \begin{align*}
        \ell_t(\theta_1)
        &=
        \ell_t(\theta_2)
        + \langle \nabla \ell_t(\theta_2), \theta_1 - \theta_2 \rangle
        + \dfrac{1}{2} \langle \nabla^2 \ell_t ( \tilde \theta ), (\theta_1- \theta_2)^{\otimes 2} \rangle, \\
        \nabla^2 \ell_t ( \theta_1 )
        &\preceq
        (1 + K \| \theta_1 - \theta_2 \|_2 ) \nabla^2 \ell_t ( \tilde \theta ),
    \end{align*}
    where $K = K(C_7, C_8) > 0$.
    Furthermore, we have
    \begin{align*}
        \max_{t \in \bbN} \sup_{\theta \in \Theta_r} \| \nabla^2 \ell_t ( \theta ) \|_2 \leq \widetilde K,
    \end{align*}
    where $\widetilde K = \widetilde K(C_8) > 0$.
\end{proposition}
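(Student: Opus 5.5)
The proof rests on the explicit scalar structure of the logistic loss. Write $\ell_t(\theta) = b(x_t^{\top}\theta) - Y_t x_t^{\top}\theta$. Then
\begin{align*}
    \nabla \ell_t(\theta) = (b'(x_t^{\top}\theta) - Y_t) x_t, \quad \nabla^2 \ell_t(\theta) = b''(x_t^{\top}\theta) x_t x_t^{\top}.
\end{align*}
Every claim therefore reduces to a one-dimensional statement about $b''$ along the line $u = x_t^{\top}\theta$.

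The last claim is immediate: $b''(u) = e^u/(1+e^u)^2 \le 1/4$, so
\begin{align*}
    \|\nabla^2 \ell_t(\theta)\|_2 \le \|x_t\|_2^2/4 \le C_8^2/4 = \widetilde K.
\end{align*}

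\textbf{The Taylor point.} I would take $\tilde\theta$ from the integral form of Taylor's theorem. Set $u_i = x_t^{\top}\theta_i$ and $v = \theta_1 - \theta_2$. Then
\begin{align*}
    \ell_t(\theta_1) - \ell_t(\theta_2) - \langle \nabla \ell_t(\theta_2), v\rangle = (x_t^{\top} v)^2 \int_0^1 (1-s)\, b''(u_2 + s(u_1-u_2))\, \rmd s.
\end{align*}
By the intermediate value theorem, since $b''$ is continuous and the weight $2(1-s)$ has unit mass, there is $s_\ast \in [0,1]$ with $\int_0^1 2(1-s) b''(\cdot)\,\rmd s = b''(u_2 + s_\ast(u_1-u_2))$. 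Set $\tilde\theta = \theta_2 + s_\ast(\theta_1-\theta_2)$, which lies in $\Theta_r$ by convexity. Then $\nabla^2\ell_t(\tilde\theta) = b''(x_t^{\top}\tilde\theta) x_t x_t^{\top}$ gives the required expansion exactly. (If $x_t^{\top}v = 0$, any point on the segment works.)

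\textbf{The domination bound.} Both matrices are multiples of $x_t x_t^{\top}$, so it suffices to show
\begin{align*}
    b''(u_1) \le (1 + K\|\theta_1-\theta_2\|_2)\, b''(\tilde u), \qquad \tilde u = x_t^{\top}\tilde\theta.
\end{align*}
The key fact is that $|b'''| = b''|1-2b'| \le b''$, so $\log b''$ is $1$-Lipschitz. This is the logistic self-concordance, presumably what Lemma \ref{lemma:logit_self_concordance} records. It yields
\begin{align*}
    b''(u_1) \le e^{|u_1-\tilde u|}\, b''(\tilde u), \qquad |u_1 - \tilde u| \le \|x_t\|_2 \|\theta_1-\theta_2\|_2 \le C_8\|\theta_1-\theta_2\|_2.
\end{align*}
To turn the exponential into the linear form $1 + K\|\cdot\|_2$, note that $\|\theta_1-\theta_2\|_2 \le 2r$. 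Hence $C_8\|\theta_1-\theta_2\|_2 \le 2C_8 r$, and the paper's hypothesis $\|x_t\|_2\, r \le C_7$ gives $C_8\|\theta_1-\theta_2\|_2 \le 2C_7$. By convexity of the exponential, $e^{y} \le 1 + \frac{e^{2C_7}-1}{2C_7}\, y$ for $y \in [0, 2C_7]$. Taking $K = C_8 (e^{2C_7}-1)/(2C_7)$ finishes the argument.

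The only mildly delicate step is this conversion from the exponential to the linear bound. It requires the bounded diameter of $\Theta_r$, which is exactly what $C_7$ supplies. The choice of $\tilde\theta$ costs nothing, since the ratio bound holds for every point on the segment.
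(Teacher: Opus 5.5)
Your overall plan matches the paper's. It takes a Taylor point $\tilde\theta$ on the segment and bounds $b''(x_t^{\top}\theta_1)/b''(x_t^{\top}\tilde\theta)$ exponentially via logistic self-concordance. It then linearizes that exponential using the bounded diameter of $\Theta_r$ together with $C_7$, and uses $b''\le 1/4$ for the uniform Hessian bound. Your ratio bound via $|b'''|\le b''$ is a more elementary substitute for the paper's route, which combines Lemma~\ref{lemma:logit_self_concordance} with the integrated-Hessian Lemma~\ref{lemma:smooth_int_hessian_logit} applied at $\theta_1$ and at $\tilde\theta$. It also gives the sharper exponent $\|x_t\|_2\|\theta_1-\tilde\theta\|_2$ in place of $\sqrt6\,\|x_t\|_2\|\theta_1-\tilde\theta\|_2$.

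There is, however, one incorrect step in the linearization. The hypothesis $\max_{t}\|x_t\|_2\, r\le C_7$ does not imply $C_8\|\theta_1-\theta_2\|_2\le 2C_7$. Since $C_8$ is only an upper bound on $\max_t\|x_t\|_2$, $C_8 r$ can exceed $C_7$; for example, take $\|x_t\|_2\equiv 1$, $r=C_7=1$, $C_8=10$, and $\|\theta_1-\theta_2\|_2=2$. In that case your chord bound $e^{y}\le 1+\frac{e^{2C_7}-1}{2C_7}\,y$ is being used outside $[0,2C_7]$, where it is false.

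The repair is to apply the chord bound before replacing $\|x_t\|_2$ by $C_8$. Take $y=|x_t^{\top}(\theta_1-\tilde\theta)|$, so that $y\le\|x_t\|_2\|\theta_1-\theta_2\|_2\le 2\|x_t\|_2\, r\le 2C_7$. Then $e^{y}\le 1+\frac{e^{2C_7}-1}{2C_7}\,\|x_t\|_2\|\theta_1-\theta_2\|_2\le 1+\frac{e^{2C_7}-1}{2C_7}\,C_8\|\theta_1-\theta_2\|_2$, which recovers exactly your proposed $K$. This is the same ordering the paper uses (with $e^{z}\le 1+z e^{z}$ in place of your chord bound): it keeps $\|x_t\|_2$ inside the exponent, controlled by $C_7$, and inserts $C_8$ only in the linear factor.
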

\begin{proof}
    Let $t \in \bbN$ and $\theta_1, \theta_2 \in \Theta_r$.
    Note that $\ell_t(\cdot)$ is twice differentiable function.
    Then, by Taylor's theorem, there exists $\tilde \theta = \tilde \theta(\theta_1, \theta_2) \in \Theta_r$ such that
    \begin{align*}
        \ell_t(\theta_1)
        =
        \ell_t(\theta_2)
        + \langle \nabla \ell_t(\theta_2), \theta_1- \theta_2 \rangle
        + \dfrac{1}{2} \langle \nabla^2 \ell_t ( \tilde \theta ), (\theta_1- \theta_2)^{\otimes 2} \rangle.
    \end{align*}
    By Lemma \ref{lemma:logit_self_concordance}, $\ell_t(\cdot)$ is $\sqrt{6} \|x_t \|_2$-self-concordance like function for any $t \in \bbN$. Let $h = \tilde \theta - \theta_1$. By applying Lemma \ref{lemma:smooth_int_hessian_logit} with $f = \ell_t$, $M = \sqrt{6} \|x_t \|_2$ and $u = \theta_1$, we have
    \begin{align*}
        \dfrac{ 1 - e^{-\sqrt{6} \|x_t\|_2 \| h \|_2 }  }{ \sqrt{6} \|x_t\|_2 \| h \|_2 }
        \nabla^2 \ell_t(\theta_1)
        \preceq
        \int_{0}^{1} \nabla^2 \ell_t \big( \theta_1 + \alpha h \big) \rmd \alpha.
    \end{align*}
    Similarly, by applying Lemma \ref{lemma:smooth_int_hessian_logit} with $u = \tilde \theta$, 
    \begin{align*}
        \int_{0}^{1} \nabla^2 \ell_t \big( \tilde \theta - \alpha h \big) \rmd \alpha
        \preceq
        \dfrac{ e^{\sqrt{6} \|x_t\|_2 \| h \|_2 } -1 }{ \sqrt{6} \|x_t\|_2 \| h \|_2 }
        \nabla^2 \ell_t(\tilde \theta).
    \end{align*}
    Since
    \begin{align*}
        \int_{0}^{1} \nabla^2 \ell_t \big( \theta_1 + \alpha h \big) \rmd \alpha
        =
        \int_{0}^{1} \nabla^2 \ell_t \big( \tilde \theta - \alpha h \big) \rmd \alpha,
    \end{align*}
    we have
    \begin{align*}
        \dfrac{ 1 - e^{-\sqrt{6} \|x_t\|_2 \| h \|_2 }  }{ \sqrt{6} \|x_t\|_2 \| h \|_2 }
        \nabla^2 \ell_t(\theta_1)
        \preceq
        \dfrac{ e^{\sqrt{6} \|x_t\|_2 \| h \|_2 } -1 }{ \sqrt{6} \|x_t\|_2 \| h \|_2 }
        \nabla^2 \ell_t(\tilde \theta),
    \end{align*}
    which further simplified to
    \begin{align*}
        \nabla^2 \ell_t(\theta_1)
        &\preceq
        \dfrac{ e^{\sqrt{6} \|x_t\|_2 \| h \|_2 } -1 }{ 1 - e^{-\sqrt{6} \|x_t\|_2 \| h \|_2 } }
        \nabla^2 \ell_t(\tilde \theta)
        =
        e^{\sqrt{6} \|x_t\|_2 \| h \|_2 }
        \nabla^2 \ell_t(\tilde \theta).
    \end{align*}
    Since $\tilde \theta$ lies on the line segment between $\theta_1$ and $\theta_2$, we have
    \begin{align*}
        e^{\sqrt{6} \|x_t\|_2 \| h \|_2 }
        &\leq 
        e^{\sqrt{6} \|x_t\|_2 \| \theta_1 - \theta_2 \|_2 }
        \leq 
        1 + e^{\sqrt{6} \|x_t\|_2 \| \theta_1 - \theta_2 \|_2 } \big( \sqrt{6} \|x_t\|_2 \| \theta_1 - \theta_2 \|_2 \big) \\
        &\leq 
        1 + e^{2 \sqrt{6} \|x_t\|_2 r } \big( \sqrt{6} C_8 \| \theta_1 - \theta_2 \|_2 \big)
        \leq 
        1 + e^{2 \sqrt{6} C_7 } \big( \sqrt{6} C_8 \| \theta_1 - \theta_2 \|_2 \big) \\
        &=
        1 + \big( e^{2 \sqrt{6} C_7 } \sqrt{6} C_8 \big) \| \theta_1 - \theta_2 \|_2,
    \end{align*}
    where the second inequality holds by the following inequality:
    \begin{align*}
        e^{z} \leq 1 + e^{z}z, \quad \forall z \in \bbR_{+}.
    \end{align*}
    Next, we will prove the last assertion. 
    Let $t \in \bbN$ and $\theta \in \Theta_r$. 
    Recall that
    \begin{align*}
        \nabla^2 \ell_t ( \theta ) = b''(x_{t}^{\top} \theta) x_t x_t^{\top}, \quad 
        \sup_{\eta \in \bbR} b''(\eta) \leq 1/4.
    \end{align*}
    Let $\cU = \{ u \in \bbR^p : \| u \|_2 = 1 \}$. Note that
    \begin{align*}
        \| \nabla^2 \ell_t ( \theta ) \|_2
        &= \sup_{u \in \cU} \langle u, \nabla^2 \ell_t ( \theta ) u \rangle
        = \sup_{u \in \cU} b''(x_{t}^{\top} \theta) \langle u, (x_t x_t^{\top}) u \rangle
        \leq \dfrac{1}{4} \sup_{u \in \cU} \langle x_t, u \rangle^2 \\
        &= \dfrac{1}{4} \| x_t \|_2^2
        \leq \dfrac{C_8^2}{4}.
    \end{align*}
    This completes the proof.
\end{proof}

For $3$-order symmmetric tensor $\cT = (\cT_{ijk})_{i,j,k \in [p]} \in \bbR^{p \times p \times p}$ and $\bA \in \bbR^{p \times p}$, let
\begin{align*}
    \langle \cT, \bA \rangle =  \big( \langle \cT_i, \bA \rangle \big)_{i \in [p]} \in \bbR^p,
\end{align*}
where $\cT_i = (\cT_{ijk})_{j, k \in [p]} \in \bbR^{p \times p}$. For a three times differentiable function $f : \bbR^{p} \rightarrow \bbR$, let
\begin{align*}
    \nabla^3 f(\theta) = \left( \dfrac{\partial^3}{\partial \theta_{i_1} \partial \theta_{i_2} \partial \theta_{i_3} } f(\theta) \right)_{i_1, i_2, i_3 \in [p]} \in \bbR^{p \times p \times p}.
\end{align*}

\begin{proposition}[Verifiaction of \eqref{A3}] \label{prop:A5}
    Assume that there exists a constant $C_8 > 0$ such that
    \begin{align*}
        \max_{t \in \bbN} \| x_t \|_2 \leq C_8.
    \end{align*}
    Then, there exists $K = K(C_8) > 0$ such that
    \begin{align*} 
        \max_{t \in \bbN} \| \nabla^2 \ell_t ( \theta ) - \nabla^2 \ell_t ( \theta' ) \|_2 
        \leq K \| \theta - \theta' \|_2, \quad
        \forall \theta, \theta' \in \Theta_r.
    \end{align*}
    Furthermore, there exists $K' = K'(C_8) > 0$ such that for any $\theta, \theta + h \in \Theta_r$
    \begin{align*}
        e^{-K' \| h \|_2} \bF_{t, \theta}
        \preceq \bF_{t, \theta+h}
        \preceq e^{K' \| h \|_2} \bF_{t, \theta}, \quad \forall t \in \bbN.
    \end{align*}
\end{proposition}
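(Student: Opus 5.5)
Both claims reduce to the explicit logistic structure $\nabla^2 \ell_t(\theta) = b''(x_t^{\top}\theta)\, x_t x_t^{\top}$ together with the self-concordance-like property of $b$ used in the proofs of Propositions \ref{prop:A1} and \ref{prop:A2}.

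\textbf{Lipschitz bound.} For any $\theta, \theta' \in \Theta_r$ we have
\begin{align*}
\nabla^2 \ell_t(\theta) - \nabla^2 \ell_t(\theta') = \big( b''(x_t^{\top}\theta) - b''(x_t^{\top}\theta') \big)\, x_t x_t^{\top}.
\end{align*}
The spectral norm of the right-hand side is at most $|b''(x_t^{\top}\theta) - b''(x_t^{\top}\theta')|\,\|x_t\|_2^2$. I will bound $b'''$ explicitly. Writing $\sigma$ for the logistic function, $b'' = \sigma(1-\sigma)$ and $b''' = \sigma(1-\sigma)(1-2\sigma)$, so $|b'''| \le 1/4$. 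The mean value theorem then gives
\begin{align*}
|b''(x_t^{\top}\theta) - b''(x_t^{\top}\theta')| \le \tfrac14 |x_t^{\top}(\theta-\theta')| \le \tfrac14 \|x_t\|_2 \|\theta - \theta'\|_2 .
\end{align*}
Combining, the spectral norm is bounded by $\tfrac14 C_8^3 \|\theta-\theta'\|_2$, uniformly in $t$, so $K = C_8^3/4$ works. Since $b'''$ is globally bounded, no convexity of $\Theta_r$ or restriction to the segment is needed.

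\textbf{Multiplicative bound on $\bF_{t,\theta}$.} I will prove the scalar inequality termwise and then sum. The key fact is $|b'''(\eta)| \le b''(\eta)$, since $|1-2\sigma| \le 1$; this is the self-concordance-like property of Lemma \ref{lemma:logit_self_concordance}. It gives $|(\log b'')'| \le 1$, and integrating along the segment yields
\begin{align*}
e^{-|x_s^{\top}h|}\, b''(x_s^{\top}\theta) \le b''(x_s^{\top}(\theta+h)) \le e^{|x_s^{\top}h|}\, b''(x_s^{\top}\theta).
\end{align*}
Since $|x_s^{\top}h| \le C_8\|h\|_2$, multiplying by $x_s x_s^{\top} \succeq 0$ gives
\begin{align*}
e^{-C_8\|h\|_2}\nabla^2\ell_s(\theta) \preceq \nabla^2\ell_s(\theta+h) \preceq e^{C_8\|h\|_2}\nabla^2\ell_s(\theta).
\end{align*}
Summing over $s \le t$ preserves the Loewner order, so the claim holds with $K' = C_8$. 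Alternatively, Lemma \ref{lemma:smooth_hessian_logit} with constant $\sqrt6\|x_s\|_2$ gives $K' = \sqrt6\, C_8$. This also implies \eqref{A3-b}, because $e^{K'\|h\|_2} \le 1 + e^{K'\|h\|_2}$.

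No real obstacle is expected. The only care points are (i) stating the derivative identities for $b$ correctly, and (ii) noting that all bounds are uniform in $t$, because they depend only on $\sup_t \|x_t\|_2 \le C_8$.
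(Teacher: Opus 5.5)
Your proof is correct, and its skeleton matches the paper's. The Lipschitz bound comes from a bound on $b'''$ times $\|x_t\|_2^3$. The second claim is proved termwise in the Loewner order and then summed over $s \le t$.

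The differences are in how the two ingredients are obtained.

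For the first claim, the paper writes the Hessian difference through the integrated third-derivative tensor $b'''(x_t^{\top}\bar\theta)\,x_t^{\otimes 3}$. It then uses the crude bound $\sup|b'''|\le 1$, giving $K = C_8^3$. You instead use the rank-one form $\nabla^2\ell_t(\theta) = b''(x_t^{\top}\theta)\,x_tx_t^{\top}$ to reduce to a scalar mean-value argument. Your bound $|b'''| = b''\,|1-2\sigma| \le b'' \le 1/4$ then gives $K = C_8^3/4$, and, as you note, no restriction to $\Theta_r$ is needed.

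For the second claim, the paper invokes the self-concordance-like property of Lemma \ref{lemma:logit_self_concordance} together with the external Lemma \ref{lemma:smooth_hessian_logit}, obtaining $K' = \sqrt{6}\,C_8$. You verify directly that $\log b''$ is $1$-Lipschitz, since $(\log b'')' = 1-2\sigma$, giving $K' = C_8$. This also sharpens the factor $e^{3|\eta_1-\eta_2|}$ of Lemma \ref{lemma:b_ratio} to $e^{|\eta_1-\eta_2|}$.

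Your route is self-contained and gives better constants, but it is tied to the logistic link. The paper's route goes through a general self-concordance framework that carries over verbatim to any loss with that property.

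Your closing remark is also correct: the bound implies \eqref{A3-b} because $e^{-K'\|h\|_2} \ge (1+e^{K'\|h\|_2})^{-1}$. The paper leaves this step implicit.
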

\begin{proof}
    Let $t \in \bbN$ and $\theta, \theta' \in \Theta_r$.
    Let $\cU = \{ u \in \bbR^p : \| u \|_2 = 1 \}$.
    By Taylor's theorem, we have
    \begin{align*}
        &\big\| \nabla^2 \ell_t ( \theta ) - \nabla^2 \ell_t ( \theta' ) \big\|_2
        =
        \sup_{u \in \cU}
        \left|
        \left\langle \int_{0}^{1} \nabla^3 \ell_t \big( \theta + s (\theta' - \theta) \big) \rmd s, \ (\theta - \theta') \otimes u \right\rangle \right| \\
        &\qquad \leq 
        \sup_{s \in [0, 1]} \left\| \nabla^3 \ell_t \big( \theta + s (\theta' - \theta) \big)  \right\|_2
        \| \theta - \theta' \|_2
        \leq 
        \sup_{\overline \theta \in \Theta_r} \big\| \nabla^3 \ell_t ( \overline \theta ) \big\|_2
        \| \theta - \theta' \|_2.
    \end{align*}
    Let $\overline \theta \in \Theta_r$.
    Note that 
    \begin{align*}
        \nabla^3 \ell_t ( \overline \theta ) = b'''(x_{t}^{\top} \overline \theta) x_t^{\otimes 3}, \quad 
        \sup_{\eta \in \bbR} | b'''(\eta) | \leq 1.
    \end{align*}
    It follows that
    \begin{align*}
        \big\| \nabla^3 \ell_t ( \overline \theta ) \big\|_2
        = 
        \sup_{u \in \cU} \left| \langle b'''(x_{t}^{\top} \overline \theta ) x_t^{\otimes 3}, u^{\otimes 3} \rangle \right|
        \leq 
        \sup_{u \in \cU} \left| \langle x_t^{\otimes 3}, u^{\otimes 3} \rangle \right|
        =
        \sup_{u \in \cU} \left| \langle x_t, u \rangle \right|^3
        \leq 
        \| x_t \|_2^3
        \leq C_8^3.
    \end{align*}
    Therefore, we have
    \begin{align*}
        \max_{t \in \bbN} \| \nabla^2 \ell_t ( \theta ) - \nabla^2 \ell_t ( \theta' ) \|_2 
        \leq C_8^3 \| \theta - \theta' \|_2.
    \end{align*}
    Next, we will prove the second assertion. Let $\theta, \theta + h \in \Theta_r$. By Lemmas \ref{lemma:logit_self_concordance} and \ref{lemma:smooth_hessian_logit}, we have
    \begin{align*}
        e^{-\sqrt{6} \|x_t\|_2 \| h \|_2 } \nabla^2 \ell_t (\theta)
        \preceq
        \nabla^2 \ell_t (\theta + h)
        \preceq
        e^{\sqrt{6} \|x_t\|_2 \| h \|_2 } \nabla^2 \ell_t (\theta),
    \end{align*}
    which, combining with the assumption, can be simplified to
    \begin{align*}
        e^{-\sqrt{6} C_8 \| h \|_2 } \nabla^2 \ell_t (\theta)
        \preceq
        \nabla^2 \ell_t (\theta + h)
        \preceq
        e^{\sqrt{6} C_8 \| h \|_2 } \nabla^2 \ell_t (\theta).
    \end{align*}
    Recall that $\bF_{t, \theta} = \sum_{s=1}^{t} \nabla^2 \ell_s (\theta)$ for $\theta \in \Theta$.
    It follows that
    \begin{align*}
        e^{-\sqrt{6} C_8 \| h \|_2 } \bF_{t, \theta}
        \preceq
        \nabla^2 \bF_{t, \theta+h}
        \preceq
        e^{\sqrt{6} C_8 \| h \|_2 } \bF_{t, \theta},
    \end{align*}
    which completes the proof.
\end{proof}

The following proposition demonstrate that under some conditions
\begin{align*} 
        \sum_{s = \tau+1}^{t} \langle \nabla \ell_{s}( \theta_{s-1}), \bOmega_{s}^{-1}  \nabla \ell_{s}( \theta_{s-1}) \rangle
        &\lesssim p \big( \log (t/t_{0}) + 1 \big) + \rmx + \log t \\
        &\lesssim p \log t + \rmx,
\end{align*}
which provides a justification of \eqref{A4-b}.

\begin{proposition}[Verification of \eqref{A4-b} in \eqref{A4}] \label{prop:A3-b}
    Let $\rmx > 0$. For $\tau > 0$ and $t \in \bbN$, let $I_{t}(\tau) = \{ s \in [t] : |x_s^{\top} \theta_{\star}| \leq \tau \}$.
    We assume the following:
    \ben
    \item There exist some constants $C_1, C_2, C_3, C_4 > 0$ such that for any $t \in \bbN$ with $t \geq C_1 p$
    \begin{align*} 
        \lambda_{\min} \bigg( \sum_{s \in I_{t}(C_2) } x_s x_s^{\top} \bigg) \geq C_3 t, \quad
        \lambda_{\max} \bigg( \sum_{s = 1}^{t} x_s x_s^{\top} \bigg) \leq C_4 t.
    \end{align*}
    \item There exist some constants $C_5, C_6 > 0$ such that 
    \begin{align*}
        \max_{t \in \bbN} \| x_t \|_2 \cdot r \leq C_5, \quad 
        \max_{t \in \bbN} \| x_t \|_2^2 \leq C_6.
    \end{align*}
    \item For the phase transition time $t_0$ satisfying $t_0 \geq C_1 (p + \rmx)$, there exists an event $\scrE_0(\rmx)$ with $\bbP_{\theta_{\star}}(\scrE_0(\rmx)) > 1 - e^{-\rmx}$ such that, on $\scrE_0(\rmx)$, $\hat \theta_{t_0}^{\MAP} \in \Theta_r$.
    \item Let $\epsilon_t = Y_t - b'( x_t^{\top} \theta_{\star} )$ and $\sigma_t^2 = b''( x_t^{\top} \theta_{\star} )$. There exist some constants $\nu, \alpha > 0$ such that for any $t \in \bbN$
    \begin{align*}
        \bbE \big[ e^{\lambda (\epsilon_t^2 - \sigma_t^2)} \mid \cF_{t-1} \big] \leq \exp \left( \nu^2 \dfrac{\lambda^2}{2} \right), \quad
        \forall |\lambda| \leq 1/\alpha.
    \end{align*}
    \een
    Then, there exists an event $\scrE(\rmx)$ with $\bbP_{\theta_{\star}}(\scrE(\rmx)) \geq 1 - e^{-\rmx}$ such that on $\scrE(\rmx) \cap \scrE_0$, for any $t \in \bbN$ with $t_{0} < t$ and $\{ \theta_{s} \}_{s = t_0+1}^{t-1} \subset \Theta_r$,
    \begin{align*} 
        \sum_{s = t_0+1}^{t} \langle \nabla \ell_{s}( \theta_{s-1}), \bOmega_{s}^{-1}  \nabla \ell_{s}( \theta_{s-1}) \rangle
        \leq 
        K \Big( p \big( \log (t/t_{0}) + 1 \big) + \rmx + \log t \Big),
    \end{align*}
    where $K = K(\nu, \alpha, C_2, C_3, C_4, C_5, C_6) > 0$.
\end{proposition}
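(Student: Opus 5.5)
Writing $g_s = \nabla \ell_s(\theta_{s-1}) = (b'(x_s^\top\theta_{s-1}) - Y_s)x_s$, I would split
\[
g_s = -\epsilon_s x_s + \big(b'(x_s^\top\theta_{s-1}) - b'(x_s^\top\theta_\star)\big)x_s,
\]
so that $\langle g_s, \bOmega_s^{-1} g_s\rangle \le 2\epsilon_s^2\, x_s^\top \bOmega_s^{-1} x_s + 2 (b''_{\max})^2 \langle x_s,\theta_{s-1}-\theta_\star\rangle^2 x_s^\top\bOmega_s^{-1}x_s$. The leverage scores $w_s = x_s^\top \bOmega_s^{-1} x_s$ are the central objects.

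First, I would compare $\bOmega_s$ with a deterministic design matrix. Since $\theta_{s-1}\in\Theta_r$ for $s\le t$ and $\hat\theta_{t_0}^{\MAP}\in\Theta_r$ on $\scrE_0$, the self-concordance argument of Proposition \ref{prop:A1} gives $b''(x_s^\top\bar\theta)\ge c\, b''(x_s^\top\theta_\star)$, hence $\bOmega_s \succeq c\,\bOmega_0 + c\sum_{r\le s} b''(x_r^\top\theta_\star)x_rx_r^\top \succeq c'\,\bA_s$. Here $\bA_s = \bI_p + \sum_{r\le s} x_rx_r^\top$ is deterministic. For the lower bound I would use that $\bOmega_s \succeq \bOmega_{s-1} + c\, b''_\star x_s x_s^\top$ with $b''$ bounded below on $\Theta_r$ (since $|x_s^\top\theta|\le$ const by $\|x_s\|\,r\le C_5$ plus $|x_s^\top\theta_\star|$; if that is unbounded, I restrict to the comparison with $\bA_s$ restricted to the relevant mass). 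The elliptical potential (log-det) lemma then gives
\[
\sum_{s=t_0+1}^t x_s^\top \bA_s^{-1}x_s \le \sum_s \log\frac{\det \bA_s}{\det\bA_{s-1}} = \log\frac{\det\bA_t}{\det\bA_{t_0}} \le p\log\frac{1+C_4 t}{C_3 t_0}\lesssim p(\log(t/t_0)+1),
\]
using $\lambda_{\min}(\bA_{t_0})\ge C_3 t_0$ and $\lambda_{\max}(\bA_t)\le 1+C_4 t$. Since $\bA_s$ contains $x_s$, each $w_s\le 1$ up to constants, which is what licenses the logdet bound. The conclusion is $\sum_s w_s \le K p(\log(t/t_0)+1)$, and this holds deterministically.

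Second, the noise term. I write $\epsilon_s^2 = \sigma_s^2 + (\epsilon_s^2-\sigma_s^2)$. The $\sigma_s^2\le 1/4$ part is bounded by the first step. For the martingale part I face a problem: $w_s$ depends on $\bOmega_s$, which involves $\theta_{s-1}$ and $\bH_s$, both $\cF_{s-1}$-measurable since $x_s$ is fixed. So $w_s$ is predictable. However, it is not deterministic, and it must also be adjusted for the exit condition. I would therefore dominate it by the deterministic weights $\bar w_s = x_s^\top\bA_s^{-1}x_s/c'$, after splitting $\epsilon_s^2-\sigma_s^2$ into positive and negative parts. Alternatively, I would apply a Freedman/Bernstein-type inequality for the predictable weights $w_s\mathbf 1\{\theta_{s-1}\in\Theta_r\}\le \bar w_s\le$ const, using the sub-exponential condition (item 4). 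This gives, with probability $1-e^{-\rmx}$ uniformly in $t$ (via a union bound over dyadic $t$, costing $\log t$),
\[
\sum_s w_s(\epsilon_s^2-\sigma_s^2) \lesssim \nu\sqrt{(\textstyle\sum_s \bar w_s^2)(\rmx+\log t)} + \alpha(\rmx+\log t),
\]
and $\sum\bar w_s^2\le \sum\bar w_s$, so AM-GM yields $\lesssim p(\log(t/t_0)+1)+\rmx+\log t$.

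Third, the bias term. Here I have $\langle x_s,\Delta_{s-1}\rangle^2 \le C_6 r^2$, so it is bounded by const$\cdot\sum_s w_s$, which by step one is $\lesssim p(\log(t/t_0)+1)$. This is crude but sufficient for the stated bound. Collecting the three pieces gives the claim.

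The main obstacle is making the concentration uniform in $t$ with random, trajectory-dependent weights. My plan is to replace them by the deterministic dominating weights $\bar w_s$ and to handle $\epsilon_s^2\ge 0$ directly, so that a fixed-weight sub-exponential martingale bound suffices. The other delicate point is ensuring that $b''(x_s^\top\theta_{s-1})$ is bounded below so that $x_s$ genuinely enters $\bOmega_s$. If $|x_s^\top\theta_\star|$ is not uniformly bounded, I would instead compare $\bOmega_s \succeq c\, \widetilde\bA_s = c(\bI + \sum_{r\le s} b''(x_r^\top\theta_\star)x_rx_r^\top)$. I would then apply the potential lemma to $\tilde x_s = \sqrt{b''_\star}\,x_s$, absorbing $\sigma_s^2 = b''_\star$ and bounding $\epsilon_s^2/b''_\star$ through item 4.
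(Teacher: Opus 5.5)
Your Step~1 has a gap, and Steps~2 and~3 inherit it, because the $\sigma_s^2$ part, the dominating weights $\bar w_s$ and the bias term are all routed through $\bOmega_s\succeq c'\bA_s$.

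\textbf{Why your justification fails.} You argue for this comparison termwise. That requires $\inf_r b''(x_r^\top\theta_\star)>0$, i.e.\ a uniform bound on $|x_r^\top\theta_\star|$. The hypotheses do not provide one. Assumption~1 only makes the subset $I_t(C_2)$ well conditioned, the other design points may have $b''(x_r^\top\theta_\star)$ arbitrarily small, and $K$ may not depend on $\|\theta_\star\|_2$.

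\textbf{Why your fallbacks fail.} Restricting $\bA_s$ to the indices $I_s(C_2)$ destroys the property that it contains $x_s$, which is what you use to license the log-det bound. Item~4 controls $\epsilon_s^2-\sigma_s^2$, not $\epsilon_s^2/b''(x_s^\top\theta_\star)$. For logistic responses the latter is about $1/\sigma_s^2$ with probability about $\sigma_s^2$, so it has no uniform sub-exponential bound. Finally, in Step~3 you replaced $b''$ by $b''_{\max}$. That leaves the bias term with the unweighted $\sum_s w_s$, which a potential bound for $\widetilde\bA_s$ does not control.

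\textbf{The repair.} The comparison is true, just for a different reason. Proposition~\ref{prop:A1} gives $\lambda_{\min}(\bOmega_s)\ge c_4 s$ on $\scrE_0$ when the trajectory stays in $\Theta_r$, and assumption~1 gives $\lambda_{\max}(\bA_s)\le 1+C_4 s$. Hence $\bOmega_s\succeq c_4 s\,\bI_p\succeq \tfrac{c_4}{1+C_4}\bA_s$, with no pointwise lower bound on $b''$. More directly, $w_s\le C_6/(c_4 s)$, so $\sum_{s=t_0+1}^{t}w_s\le (C_6/c_4)\log(t/t_0)$.

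With this bound your three steps go through, and $\bar w_s=C_6/(c_4 s)$ serves as the deterministic dominating weight. In the truncation, take the indicator over the whole past, $\mathbf{1}\{\hat\theta^{\MAP}_{t_0}\in\Theta_r,\ s\le t_{\exit}\}$. The reason is that $w_s$ depends on every earlier iterate, not only on $\theta_{s-1}$.

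\textbf{How the paper differs.} For the $\sigma_s^2$ and bias terms the paper argues differently. It uses Lemma~\ref{lemma:b_ratio} with $\|x_s\|_2 r\le C_5$ and $b''\le 1/4$. This bounds both terms by a constant times $\hat\sigma_s^2 w_s=\langle\bOmega_s^{-1},\bH_s\rangle$, where $\hat\sigma_s^2=b''(x_s^\top\theta_{s-1})$. Lemma~\ref{lemma:elliptic_potential} then runs on the algorithm's own recursion $\bOmega_s=\bOmega_{s-1}+\bH_s$, and the eigenvalue bound is needed only for the martingale weights.

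That weighting keeps $c_4$ out of the coefficient of the leading $p\log(t/t_0)$ term; there $c_4$ appears only inside a logarithm. Your reduction to a single deterministic leverage sum instead makes the concentration step cleaner.
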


\begin{proof}
    For simplicity of notation, we introduce the following notations:
    \begin{align*}
        \hat \mu_t = b'( x_{t}^{\top} \theta_{t-1} ), \quad
        \mu_t = b'( x_{t}^{\top} \theta_{\star} ), \quad
        \hat \sigma_t^2 = b''( x_{t}^{\top} \theta_{t-1} ), \\
        \delta_t = \hat \mu_t - \mu_t, \quad 
        g_t = \nabla \ell_t (\theta_{t-1}), \quad 
        \bH_t = \nabla^2 \ell_t (\theta_{t-1}).
    \end{align*}
    Fix $t_{0} \in \bbN$ with $t_{0} \geq C_1 (p + \rmx)$ and let $t \in \bbN$ with $t_{0} < t$. In this proof, we assume that $\{ \theta_s \}_{s=t_{0}+1}^{t-1} \subset \Theta_r$ and $\hat \theta_{t_0}^{\MAP} \in \Theta_r$ on $\scrE_0$.
    Note that
    \begin{align*}
        &\sum_{s=t_{0}+1}^{t} \langle g_s, \bOmega_{s}^{-1} g_s \rangle
        = \sum_{s=t_{0}+1}^{t} \big( Y_s - \hat \mu_s \big)^2 \langle x_s, \bOmega_{s}^{-1} x_s \rangle
        = \sum_{s=t_{0}+1}^{t} \big( \epsilon_s - \delta_s \big)^2 \langle x_s, \bOmega_{s}^{-1} x_s \rangle \\
        &\qquad \leq 
        2\sum_{s=t_{0}+1}^{t} \epsilon_s^2 \langle x_s, \bOmega_{s}^{-1} x_s \rangle
        +
        2\sum_{s=t_{0}+1}^{t} \delta_s^2 \langle x_s, \bOmega_{s}^{-1} x_s \rangle \\
        &\qquad =
        2\sum_{s=t_{0}+1}^{t} (\epsilon_s^2 - \sigma_s^2) \langle x_s, \bOmega_{s}^{-1} x_s \rangle
        +
        2\sum_{s=t_{0}+1}^{t} \sigma_s^2 \langle x_s, \bOmega_{s}^{-1} x_s \rangle
        +
        2\sum_{s=t_{0}+1}^{t} \delta_s^2 \langle x_s, \bOmega_{s}^{-1} x_s \rangle \\
        &\qquad =
        2\sum_{s=t_{0}+1}^{t} \xi_s \langle x_s, \bOmega_{s}^{-1} x_s \rangle
        +
        2\sum_{s=t_{0}+1}^{t} \sigma_s^2 \langle x_s, \bOmega_{s}^{-1} x_s \rangle
        +
        2\sum_{s=t_{0}+1}^{t} \delta_s^2 \langle x_s, \bOmega_{s}^{-1} x_s \rangle,
    \end{align*}
    where $\xi_s = \epsilon_s^2 - \sigma_s^2$.
    By the last display, we only need to bound the following quantities:
    \begin{align*}
        ({\rm i})_t = \sum_{s=t_{0}+1}^{t} \xi_s \langle x_s, \bOmega_{s}^{-1} x_s \rangle, \quad
        ({\rm ii})_t = \sum_{s=t_{0}+1}^{t} \sigma_s^2 \langle x_s, \bOmega_{s}^{-1} x_s \rangle, \quad
        ({\rm iii})_t = \sum_{s=t_{0}+1}^{t} \delta_s^2 \langle x_s, \bOmega_{s}^{-1} x_s \rangle.
    \end{align*}
    \textbf{Step 1: $({\rm i})_t$} \\
    First, we will obtain an upper bound of $({\rm i})_t$. 
    Let $S_t = \sum_{s=t_{0}+1}^{t} \xi_s \langle x_s, \bOmega_{s}^{-1} x_s \rangle$.
    Let $q_t = \langle x_t, \bOmega_t^{-1} x_t \rangle$ and $\widetilde \xi_t = \xi_t q_t$.
    Note that $q_t$ is $\cF_{t-1}$-measurable.
    Combining with the assumption, we have
    \begin{align*}
        \bbE \big[ e^{\lambda \widetilde \xi_s} \mid \cF_{s-1} \big]
        =        
        \bbE \big[ e^{\lambda \xi_s q_s} \mid \cF_{s-1} \big]
        \leq 
        \exp \left(  (\nu q_s)^2 \dfrac{\lambda^2}{2} \right), \quad
        \forall |\lambda| \leq 1/(\alpha q_s).
    \end{align*}    
    Let $\rmx > 0$ and
    \begin{align*}
        V_t = \sum_{s=t_{0} + 1}^{t} \nu^2 q_s^2, \quad 
        A_t = \alpha \max_{s \in \{ t_{0}+1, ..., t \}} q_s, \quad 
        \rmx_t = \rmx + \log t
    \end{align*}
    By Lemma \ref{lemma:MDS_subexp}, there exists an event $\scrE(\rmx)$ with $\bbP_{\theta_{\star}}(\scrE(\rmx)) \geq 1 - e^{-\rmx}$ such that, on $\scrE(\rmx)$, for any $t > t_{0}$,
    \begin{align*}
        \left| S_t \right| 
        \leq c_1 \left( \sqrt{V_t(\rmx_t + 1)} + A_t (\rmx_t + 1)  \right)
        \leq c_2 \left( \sqrt{V_t \rmx_t} + A_t \rmx_t \right),
    \end{align*}
    where $c_1, c_2 > 0$ are universal constants, and the second inequality holds by $t > 1$.
    To bound the right-hand side in the last display, we will obtain upper bounds of $V_t$ and $A_t$.
    Note that
    \begin{align}
    \begin{aligned} \label{eqn:ex_1_1} 
        \lambda_{\max} \big( \bOmega_t \big)
        &\leq 
        \| \bOmega_0 \|_2 + \lambda_{\max} \bigg( \sum_{s=1}^{t_{0}} \nabla^2 \ell_s ( \hat \theta_{t_0}^{\MAP} )
        +
        \sum_{s=t_{0}+1}^{t} \nabla^2 \ell_s (\theta_{s-1}) \bigg) \\
        &\leq 
        \| \bOmega_0 \|_2 + \dfrac{1}{4}\lambda_{\max} \bigg( \sum_{s = 1}^{t} x_s x_s^{\top} \bigg) 
        \leq
        \overline \lambda_0 + \dfrac{C_4}{4} t \leq c_3 t,
    \end{aligned}        
    \end{align}
    where the third inequality holds by the definition of $\overline \lambda_0$ and the assumption, and $c_3 = c_3(C_4) > 0$. 
    Recall that $\theta_{t_0} \in \Theta_r$ and $\{ \theta_s \}_{s=t_{0}}^{t-1} \subset \Theta_r$. Then, we have on $\scrE_0$
    \begin{align}
    \begin{aligned} \label{eqn:ex_1_2} 
        \lambda_{\min} \big( \bOmega_t \big)
        &\geq 
        \lambda_{\min} \big( \bOmega_0 \big) 
        + \lambda_{\min} \left( 
        \sum_{s=1}^{t_{0}} \nabla^2 \ell_s ( \hat \theta_{t_0}^{\MAP} )
        +
        \sum_{s=t_{0}+1}^{t} \nabla^2 \ell_s (\theta_{s-1}) 
        \right) \\
        &\geq 
        \underline \lambda_0 + c_4 t
        \geq 
        c_4 t,
    \end{aligned}        
    \end{align}
    where $c_4 = c_4(C_2, C_3, C_4, C_5) > 0$ and the second inequality holds by Proposition \ref{prop:A1}.
    Then, we have on $\scrE_0$
    \begin{align*}
        V_t 
        &= \sum_{s=t_{0} + 1}^{t} \nu^2 \langle x_s, \bOmega_{s}^{-1} x_s \rangle^2
        \leq 
        \nu^2 \sum_{s=t_{0} + 1}^{t} \| x_s \|_2^4 \lambda_{\min}^{-2} \big( \bOmega_{s} \big)
        \leq 
        \nu^2 C_6^2 \sum_{s=t_{0} + 1}^{t} (c_4 s)^{-2} \\
        &\leq 
        c_5 t_{0}^{-1} \leq c_5,
    \end{align*}
    where $c_5 = c_5(\nu, C_6, c_4) = c_5(\nu, C_2, C_3, C_4, C_5, C_6) > 0$.
    Also, on $\scrE_0$
    \begin{align*}
        A_t 
        &= \alpha \max_{s \in \{ t_{0}+1, ..., t \}} \langle x_s, \bOmega_{s}^{-1} x_s \rangle
        \leq \alpha \lambda_{\min}^{-1} \big( \bOmega_{t_{0} +1} \big) \max_{s \in \{ t_{0}+1, ..., t \}} \| x_s \|_2^2  \\
        &\leq \alpha C_6 (c_4 t_{0})^{-1}
        \leq c_6,
    \end{align*}
    where $c_6 = c_6(\alpha, C_6, c_4) = c_6(\alpha, C_2, C_3, C_4, C_5, C_6) > 0$.
    Therefore, on $\scrE(\rmx) \cap \scrE_0$, the following inequalities hold uniformly for all $t > t_{0}$:
    \begin{align} \label{eqn:ex_1_4}
        ({\rm i})_t \leq
        \left| S_t \right| 
        \leq c_2 \left( \sqrt{V_t \rmx_t} + A_t \rmx_t \right)
        \leq c_7 \rmx_t,
    \end{align}
    where $c_7 = c_7(c_2, c_5, c_6) = c_7(\nu, \alpha, C_2, C_3, C_4, C_5, C_6) > 0$.

    \noindent \textbf{Step 2:} $({\rm ii})_t$ and $({\rm iii})_t$ \\
    Next, we will obtain upper bounds of $({\rm ii})_t$ and $({\rm iii})_t$.
    Let $s \in \bbN$ with $t_0 < s \leq t$.
    By the definitions, we have
    \begin{align*}
        |\delta_s| 
        &= | b'( x_{s}^{\top} \theta_{s-1} ) - b'( x_{s}^{\top} \theta_{\star} ) |
        = b''( x_{s}^{\top} \widetilde \theta_{s-1} ) | x_s^{\top} ( \theta_{s-1} - \theta_{\star} ) | \\
        &= \dfrac{ b''( x_{s}^{\top} \widetilde \theta_{s-1} ) }{ b''( x_{s}^{\top} \theta_{s-1} ) }
        | x_s^{\top} ( \theta_{s-1} - \theta_{\star} ) |
        b''( x_{s}^{\top} \theta_{s-1} ) \\
        \overset{ \text{Lemma \ref{lemma:b_ratio}} }&{\leq}
        \exp\big( 3 |x_s^{\top} ( \widetilde \theta_{s-1} - \theta_{s-1} ) | \big)
        | x_s^{\top} ( \theta_{s-1} - \theta_{\star} ) |
        b''( x_{s}^{\top} \theta_{s-1} ) \\
        &\leq
        \exp\big( 6 \max_{t \in \bbN} \| x_t \|_2 \cdot r \big)
        \big( \max_{t \in \bbN} \| x_t \|_2 \cdot r \big)
        b''( x_{s}^{\top} \theta_{s-1} ) \\
        &\leq 
        \exp\big( 6 C_5 \big) C_5 b''( x_{s}^{\top} \theta_{s-1} )
    \end{align*}
    where the second equality holds for some $\widetilde \theta_{s-1} \in \Theta_r$ by the Taylor's theorem. 
    By the last display, we have
    \begin{align*}
        \delta_s^2
        \leq 
        \exp\big( 12 C_5 \big) C_5^2 \big[ b''( x_{s}^{\top} \theta_{s-1} ) \big]^2
        \leq 
        \dfrac{e^{12C_5} C_5^2}{4} b''( x_{s}^{\top} \theta_{s-1} )
        =
        c_8 \hat \sigma_s^2,
    \end{align*}
    where the last inequality holds by $b''(\cdot) \leq 1/4$, and $c_8 = c_8(C_5) > 0$.
    Also,
    \begin{align*}
        \sigma_s^2 
        &= b''( x_{s}^{\top} \theta_{\star} )
        = \dfrac{ b''( x_{s}^{\top} \theta_{\star} ) }{ b''( x_{s}^{\top} \theta_{s-1} ) } 
        b''( x_{s}^{\top} \theta_{s-1} ) \\
        \overset{ \text{Lemma \ref{lemma:b_ratio}} }&{\leq}
        \exp\big( 3 |x_s^{\top} ( \theta_{s-1} - \theta_{\star} ) | \big)
        b''( x_{s}^{\top} \theta_{s-1} )
        \leq 
        \exp\big( 3 \max_{t \in \bbN} \| x_t \|_2 \cdot r \big)
        b''( x_{s}^{\top} \theta_{s-1} ) \\
        &\leq 
        \exp\big( 3 C_5 \big) b''( x_{s}^{\top} \theta_{s-1} )
        =
        c_9 b''( x_{s}^{\top} \theta_{s-1} ) = c_9 \hat \sigma_s^2,
    \end{align*}
    where $c_9 = c_9(C_5) > 0$.
    It follows that
    \begin{align*}
        ({\rm ii})_t 
        &= 
        \sum_{s=t_{0}+1}^{t} \sigma_s^2 \langle x_s, \bOmega_{s}^{-1} x_s \rangle
        \leq 
        c_9 \sum_{s=t_{0}+1}^{t} \hat \sigma_s^2 \langle x_s, \bOmega_{s}^{-1} x_s \rangle
        =
        c_9 \sum_{s=t_{0}+1}^{t} \langle \bOmega_{s}^{-1}, \bH_s \rangle \\
        &\leq
        c_9 \big( \logdet(\bOmega_t) - \logdet(\bOmega_{t_{0}}) \big),
    \end{align*}
    where the second equality and inequality hold by $\bH_s = \hat \sigma_s^2 x_s^{\otimes 2}$ and Lemma \ref{lemma:elliptic_potential}, respectively.
    Also,
    \begin{align*}
        ({\rm iii})_t 
        &= 
        \sum_{s=t_{0}+1}^{t} \delta_s^2 \langle x_s, \bOmega_{s}^{-1} x_s \rangle
        \leq 
        c_8 \sum_{s=t_{0}+1}^{t} \hat \sigma_s^2 \langle x_s, \bOmega_{s}^{-1} x_s \rangle
        =
        c_8 \sum_{s=t_{0}+1}^{t} \langle \bOmega_{s}^{-1}, \bH_s \rangle \\
        &\leq
        c_8 \big( \logdet(\bOmega_t) - \logdet(\bOmega_{t_{0}}) \big).
    \end{align*}
    To bound $({\rm ii})_t$ and $({\rm iii})_t$, we will obtain an upper bound of $\logdet(\bOmega_t) - \logdet(\bOmega_{t_{0}})$.
    By \eqref{eqn:ex_1_1} and \eqref{eqn:ex_1_2}, we have on $\scrE_0$
    \begin{align*}
        \logdet(\bOmega_t) - \logdet(\bOmega_{t_{0}})
        &\leq 
        p \log \Big( \lambda_{\max} \big( \bOmega_t \big) \Big)
        -
        p \log \Big( \lambda_{\min} \big( \bOmega_{t_{0}} \big) \Big) \\
        &\leq
        p \log (c_3 t) - p \log (c_4 t_{0})
        \leq
        p \log (t/t_{0}) + p |\log(c_3/c_4)|.
    \end{align*}
    Therefore, we have on $\scrE_0$
    \begin{align}
    \begin{aligned} \label{eqn:ex_1_3}
        ({\rm ii})_t &\leq c_9 p \log (t/t_{0}) +  p \big( c_9 |\log(c_3/c_4)| \big) 
        \leq c_{10} \cdot p \big( \log (t/t_{0}) + 1 \big), \\
        ({\rm iii})_t &\leq c_8 p \log (t/t_{0}) +  p \big( c_8 |\log(c_3/c_4)| \big)
        \leq c_{10} \cdot p \big( \log (t/t_{0}) + 1 \big),
    \end{aligned}        
    \end{align}
    where $c_{10} = c_{10}(c_3, c_4, c_8, c_9) = c_{10}(C_2, C_3, C_4, C_5) > 0$.

    \noindent \textbf{Step 3:}
    Combining \eqref{eqn:ex_1_4} and \eqref{eqn:ex_1_3}, we have, on $\scrE(\rmx) \cap \scrE_0$, 
    \begin{align*}
        2 \big( ({\rm i})_t + ({\rm ii})_t + ({\rm iii})_t \big)
        &\leq 
        2c_7 (\rmx + \log t) + 
        2c_{10} \cdot p \big( \log (t/t_{0}) + 1 \big) +
        2c_{10} \cdot p \big( \log (t/t_{0}) + 1 \big) \\
        &\leq 
        c_{11} \Big( p \big( \log (t/t_{0}) + 1 \big) + \rmx + \log t \Big),
    \end{align*}    
    where $c_{11} = c_{11}(c_7, c_{10}) = c_{11}(\nu, \alpha, C_2, C_3, C_4, C_5, C_6) > 0$.
    This completes the proof.
\end{proof}

\begin{remark}
    If we consider $C_6 = \widetilde C_6 p$ for some constant $\widetilde C_6 > 0$, we can replace the bound in Proposition \ref{prop:A3-b} with
    \begin{align*} 
            \sum_{s = \tau+1}^{t} \langle \nabla \ell_{s}( \theta_{s-1}), \bOmega_{s}^{-1}  \nabla \ell_{s}( \theta_{s-1}) \rangle
            \leq 
            \widetilde K \Big( p \big( \log (t/t_{0}) + 1 \big) + \sqrt{p ( \rmx + \log t) } + \rmx + \log t \Big),
    \end{align*}
    where $\widetilde K = \widetilde K(\nu, \alpha, C_2, C_3, C_4, C_5, \widetilde C_6) > 0$.    
\end{remark}

\begin{remark}
    Since we focus on logistic regression with bounded $\epsilon_t$, the fourth assumption in Proposition \ref{prop:A3-b} can be satisfied with $\nu = 1/2$ and $\alpha = 0$ (with the convention $1/0 = \infty$), which leads to a simpler and tighter analysis. However, we aim to provide a general proof strategy for \eqref{A4-b}. In particular, the same proof framework can be readily extended to various statistical models with general error distributions (e.g., $\alpha$-sub-exponential distributions; see \citet{Gotze2021subE}). Therefore, we believe that retaining the current proof is beneficial for readers.
\end{remark}

\subsection{Example: general setup}
\begin{proposition}[General version of \eqref{A4-a} in \eqref{A4}] \label{prop:A3-a}
    Let $(\Omega, \cA)$ be a measurable space equipped with a filtration $\bbF = (\cF_t)_{t \in \bbN_0}$.
    Let $\Theta = \bbR^p$ be the parameter space, and $\{ \bbP_{\theta} : \theta \in \Theta \}$ be a family of probability measures on $(\Omega, \cA)$.
    Let $\bbF$-adapted real-valued data stream $(Y_t)_{t \in \bbN}$ and $\bbF$-adapted $\Theta$-valued sequence $(\widetilde \theta_t)_{t \in \bbN_0}$ be given.
    We assume the following:
    \ben
    \item For $t \in \bbN$, $Y_t$ admits a jointly measurable family of conditional densities
    \begin{align*}
        (\omega, \theta, y) \mapsto p_{t, \theta}(y \mid \cF_{t-1})(\omega), \quad (\omega, \theta, y) \in \Omega \times \Theta \times \bbR
    \end{align*}
    such that for every $\theta \in \Theta$ and all measurable $A \subset \bbR$
    \begin{align*}
        \bbP_{\theta} \big( Y_t \in A \mid \cF_{t-1} \big) = \int_{A} p_{t, \theta}(y \mid \cF_{t-1}) \rmd y, \quad {\rm a.s.}
    \end{align*}
    \item For $t \in \bbN$, the support $S_t = \{ y \in \bbR : p_{t, \theta}(y \mid \cF_{t-1}) > 0 \}$ is independent of $\theta \in \Theta$ almost surely.
    \een
    Let $\ell_t(\theta) = -\log p_{t, \theta}(Y_t \mid \cF_{t-1})$.
    Then, for any fixed true parameter $\theta_{\star}$ and $\rmx > 0$, we have
    \begin{align*}
        \bbP_{\theta_{\star}} \bigg( \sup_{t \geq 1} \sum_{s=1}^{t} \big( \ell_s(\theta_{\star}) - \ell_s(\widetilde \theta_{s-1}) \big)  \leq \rmx \bigg) \geq 1 - e^{-\rmx}.
    \end{align*}
\end{proposition}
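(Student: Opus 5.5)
The proof uses a likelihood-ratio supermartingale combined with Ville's maximal inequality. Define
\begin{align*}
    Z_t = \exp\bigg( \sum_{s=1}^{t} \big( \ell_s(\theta_{\star}) - \ell_s(\widetilde \theta_{s-1}) \big) \bigg)
    = \prod_{s=1}^{t} \dfrac{p_{s, \widetilde \theta_{s-1}}(Y_s \mid \cF_{s-1})}{p_{s, \theta_{\star}}(Y_s \mid \cF_{s-1})}, \quad Z_0 = 1.
\end{align*}
The claim is that $(Z_t)_{t \in \bbN_0}$ is a nonnegative $\bbF$-supermartingale under $\bbP_{\theta_{\star}}$. The event in the statement is exactly $\{\sup_{t} Z_t \le e^{\rmx}\}$. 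Once the supermartingale property holds, Ville's inequality gives
\[
\bbP_{\theta_{\star}}\Big(\sup_t Z_t \ge e^{\rmx}\Big) \le \bbE Z_0 / e^{\rmx} = e^{-\rmx},
\]
which is the desired bound. The non-strict versus strict inequality at the boundary is harmless: apply Ville at level $e^{\rmx}$ and note that the complement of the target event is $\{\sup_t Z_t > e^{\rmx}\}$.

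\textbf{Step 1: the one-step conditional expectation.} Since $\widetilde \theta_{s-1}$ is $\cF_{s-1}$-measurable and the conditional density family is jointly measurable, we may substitute $\theta = \widetilde \theta_{s-1}$ into the conditional density. This gives
\begin{align*}
    \bbE_{\theta_{\star}}\bigg[ \dfrac{p_{s, \widetilde \theta_{s-1}}(Y_s \mid \cF_{s-1})}{p_{s, \theta_{\star}}(Y_s \mid \cF_{s-1})} \,\bigg|\, \cF_{s-1} \bigg]
    = \int_{S_s} p_{s, \widetilde \theta_{s-1}}(y \mid \cF_{s-1}) \, \rmd y \le 1 \quad \text{a.s.}
\end{align*}
The common-support assumption makes the ratio well defined $\bbP_{\theta_{\star}}$-a.s., because $Y_s \in S_s$ almost surely. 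Here the integral over $S_s$ in fact equals $1$; even without this, the inequality $\le 1$ suffices.

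\textbf{Step 2: the supermartingale property.} Multiplying by the $\cF_{s-1}$-measurable factor $Z_{s-1}$ gives $\bbE[Z_s \mid \cF_{s-1}] \le Z_{s-1}$. Nonnegativity of $Z_t$ is immediate from its definition as a product of density ratios.

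\textbf{Step 3: conclusion.} Ville's maximal inequality for nonnegative supermartingales yields $\bbP(\sup_t Z_t \ge c) \le \bbE Z_0 / c$. Taking $c = e^{\rmx}$ and using $Z_0 = 1$ completes the proof.

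The main obstacle is Step 1, specifically justifying the substitution of the random, $\cF_{s-1}$-measurable parameter $\widetilde \theta_{s-1}$ into the regular conditional density. I would handle it by first treating simple (countably-valued) $\cF_{s-1}$-measurable parameters, where the identity reduces to the fixed-$\theta$ statement on each piece $\{\widetilde\theta_{s-1}=\theta_j\}\in\cF_{s-1}$. I would then extend to a general $\widetilde \theta_{s-1}$ either by joint measurability together with a disintegration argument, or by monotone class and Fatou, since only the inequality $\le 1$ is needed.
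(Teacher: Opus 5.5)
Your proposal is correct and follows the paper's proof: the paper defines the same likelihood-ratio product $M_t$, computes $\bbE_{\theta_{\star}}[M_t \mid \cF_{t-1}] = M_{t-1}\int p_{t,\widetilde\theta_{t-1}}(y\mid\cF_{t-1})\,\rmd y = M_{t-1}$ (so it uses an exact martingale where you use a supermartingale), and then applies Ville's maximal inequality at level $e^{\rmx}$. The paper substitutes the $\cF_{t-1}$-measurable $\widetilde\theta_{t-1}$ without comment, so your extra care goes beyond it. Of your two suggested justifications, the freezing (disintegration) lemma for the fixed kernel $p_{s,\theta_{\star}}(y\mid\cF_{s-1})\,\rmd y$ is the one that goes through, provided the densities integrate to one for every $(\omega,\theta)$, which the paper also tacitly assumes; a Fatou limit over simple approximations of $\widetilde\theta_{s-1}$ would instead need lower semicontinuity of $\theta\mapsto p_{s,\theta}(y\mid\cF_{s-1})$, which is not assumed.
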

\begin{proof}
    Let $M_0 = 1$. For $t \in \bbN$, define the likelihood ratio process $(M_t)_{t \in \bbN}$ as follows:
    \begin{align*}
        M_t = \prod_{s=1}^{t} \dfrac{ p_{s, \widetilde \theta_{s-1}} (Y_s \mid \cF_{s-1} ) }{ p_{s, \theta_{\star}} (Y_s \mid \cF_{s-1} ) }.
    \end{align*}
    By the definition of $\ell_t(\theta)$, we have for any $t \in \bbN$
    \begin{align*}
        \ell_t(\theta_{\star}) - \ell_t(\widetilde \theta_{t-1})
        &=
        - \log p_{t, \theta_{\star}} (Y_t \mid \cF_{t-1} )
        + \log p_{t, \widetilde \theta_{t-1}} (Y_t \mid \cF_{t-1} ) \\
        &=
        \log \dfrac{ p_{t, \widetilde \theta_{t-1}} (Y_t \mid \cF_{t-1} ) }{ p_{t, \theta_{\star}} (Y_t \mid \cF_{t-1} ) }.
    \end{align*}
    Hence, we have
    \begin{align*}
        \log M_t 
        = \sum_{s=1}^{t} \log \dfrac{ p_{s, \widetilde \theta_{s-1}} (Y_s \mid \cF_{s-1} ) }{ p_{s, \theta_{\star}} (Y_s \mid \cF_{s-1} ) }
        = \sum_{s=1}^{t} \ell_s(\theta_{\star}) - \ell_s(\widetilde \theta_{s-1}).
    \end{align*}
    Note that
    \begin{align*}
        \bbE_{\theta_{\star}} \big[ M_t \mid \cF_{t-1} \big]
        &=
        M_{t-1}
        \int
        \dfrac{ p_{t, \widetilde \theta_{t-1}} (y \mid \cF_{t-1} ) }{ p_{t, \theta_{\star}} (y \mid \cF_{t-1} ) }
        p_{t, \theta_{\star}} (y \mid \cF_{t-1} ) \rmd y \\
        &=
        M_{t-1}
        \int
        p_{t, \widetilde \theta_{t-1}} (y \mid \cF_{t-1} ) \rmd y 
        =
        M_{t-1}, \quad {\rm a.s.},
    \end{align*}
    where $\bbE_{\theta_{\star}}$ denotes the expectation under $\bbP_{\theta_{\star}}$. By applying Ville's maximal inequality for non-negative (super)martingale, for any $\lambda > 0$, we have
    \begin{align*}
        \bbP_{\theta_{\star}} \Big( \sup_{t \geq 1} M_t \geq \lambda \Big) \leq \dfrac{ \bbE_{\theta_{\star}} M_0 }{\lambda} \leq \lambda^{-1}.
    \end{align*}
    For any $\rmx > 0$ with $\lambda = e^{\rmx} > 0$, therefore, we have
    \begin{align*}
        \bbP_{\theta_{\star}} \Big( \sup_{t \geq 1} \log M_t \geq \rmx \Big) \leq e^{-\rmx},
    \end{align*}
    which completes the proof.
\end{proof}

\section{Technical lemmas}

\subsection{Lemmas for main results}

\begin{lemma} \label{lemma:Gaussian_TV}
    For $\mu_{1}, \mu_{2} \in \bbR^{p}$ and $\bOmega_{1}, \bOmega_{2} \in \symmPD$, let $Q_1 = \cN(\mu_{1}, \bOmega_{1}^{-1})$ and $Q_2 = \cN(\mu_{2}, \bOmega_{2}^{-1})$.
    Suppose that 
    \begin{align*}
        \left\| \bOmega_{2}^{-1/2} \bOmega_{1} \bOmega_{2}^{-1/2} - \bI_{p} \right\|_{2} \leq 0.684.
    \end{align*}
    Then,
    \begin{align*}
        d_{V} \left( Q_1, Q_2 \right) 
        \leq 
        \dfrac{1}{2} \Bigg( \left\| \bOmega_{1}^{1/2} \left( \mu_1 - \mu_2  \right)  \right\|_{2}^{2}  
        +  
        \left\| \bOmega_{2}^{-1/2} \bOmega_{1} \bOmega_{2}^{-1/2} - \bI_{p} \right\|_{\rm F}^{2}
        \Bigg)^{1/2}.
    \end{align*}
\end{lemma}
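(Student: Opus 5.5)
The plan is to bound total variation by Kullback--Leibler divergence via Pinsker's inequality, $d_V(Q_1,Q_2) \le \sqrt{K(Q_1;Q_2)/2}$ (or the reverse direction, whichever matches the stated normalization), and then compute the Gaussian KL divergence in closed form. For $Q_1 = \cN(\mu_1, \bOmega_1^{-1})$ and $Q_2 = \cN(\mu_2, \bOmega_2^{-1})$ we have
\begin{align*}
    2K(Q_2; Q_1) = \big\| \bOmega_1^{1/2}(\mu_1 - \mu_2) \big\|_2^2 + \operatorname{tr}(\bA) - p - \log\det(\bA),
\end{align*}
where $\bA = \bOmega_1^{1/2} \bOmega_2^{-1} \bOmega_1^{1/2}$. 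The mean term already appears with exactly the weighting $\bOmega_1^{1/2}$ that the statement uses, so choosing $K(Q_2;Q_1)$ is the natural orientation. The eigenvalues of $\bA$ coincide with those of $\bOmega_2^{-1/2}\bOmega_1\bOmega_2^{-1/2}$ after inversion; in particular they are $1/(1+\lambda_j)$, where $\lambda_j$ are the eigenvalues of $\bm{B} - \bI_p$ with $\bm{B} = \bOmega_2^{-1/2}\bOmega_1\bOmega_2^{-1/2}$.

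Writing $\mu_j = 1/(1+\lambda_j)$, the trace-log part becomes $\sum_j \{\mu_j - 1 - \log \mu_j\} = \sum_j \{ (1+\lambda_j)^{-1} - 1 + \log(1+\lambda_j)\}$. Set $f(\lambda) = \log(1+\lambda) - \lambda/(1+\lambda)$. The key scalar step is to show that $f(\lambda) \le \lambda^2$ whenever $|\lambda| \le 0.684$. Near zero, $f(\lambda) = \lambda^2/2 - (2/3)\lambda^3 + \cdots$, so the inequality holds for small $|\lambda|$. The binding side is negative $\lambda$, where $f$ grows fastest. The constant $0.684$ is presumably exactly where $f(\lambda) = \lambda^2$ first fails for $\lambda < 0$, so I expect to verify this by analysing $g(\lambda) = \lambda^2 - f(\lambda)$: it satisfies $g(0) = 0$, its derivative has a sign pattern that can be checked explicitly, and its value at $-0.684$ is checked to be nonnegative. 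This numerical and elementary calculus check is the main obstacle; the rest is bookkeeping.

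Summing over $j$ gives $\operatorname{tr}(\bA) - p - \log\det\bA \le \sum_j \lambda_j^2 = \|\bm{B} - \bI_p\|_{\rm F}^2$. Hence
\begin{align*}
    2K(Q_2;Q_1) \le \big\|\bOmega_1^{1/2}(\mu_1-\mu_2)\big\|_2^2 + \|\bm{B}-\bI_p\|_{\rm F}^2.
\end{align*}
Pinsker then yields $d_V \le \sqrt{K/2} = \tfrac12\sqrt{2K}$, which is precisely the claimed bound. If the constant in the scalar inequality turns out slightly off for this orientation, I would instead use $K(Q_1;Q_2)$ and compare the mean term via $\bOmega_2 \preceq (1-0.684)^{-1}\bOmega_1$. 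However, that approach loses a constant, so I would first try the orientation above.
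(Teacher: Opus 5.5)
Your route is the paper's own: Pinsker, the closed-form Gaussian divergence in the orientation whose mean term is weighted by $\bOmega_1$ (your $K(Q_2;Q_1)$ is exactly the formula the paper writes down, although it labels it $K(Q_1,Q_2)$), and an eigenvalue-by-eigenvalue scalar bound. However, the eigenvalue step contains a genuine error, and it leads you to a false inequality.

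The matrix $\bA=\bOmega_1^{1/2}\bOmega_2^{-1}\bOmega_1^{1/2}$ is similar to $\bOmega_2^{-1}\bOmega_1$, which is in turn similar to $\mathbf{B}=\bOmega_2^{-1/2}\bOmega_1\bOmega_2^{-1/2}$. Hence the eigenvalues of $\bA$ are $1+\lambda_j$, not $1/(1+\lambda_j)$. With the reciprocals you are led to require $f(\lambda)=\log(1+\lambda)-\lambda/(1+\lambda)\le\lambda^2$, and this is false on much of the allowed range. Indeed, $g(\lambda)=\lambda^2-f(\lambda)$ satisfies $g'(\lambda)=\lambda\{2-(1+\lambda)^{-2}\}$ and tends to $-\infty$ as $\lambda\downarrow-1$, so it is negative for all $\lambda$ below roughly $-0.41$. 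At $\lambda=-0.684$ one has $f\approx1.01$ against $\lambda^2\approx0.47$. So the endpoint check you planned would fail, and $0.684$ is not where $f(\lambda)=\lambda^2$ first fails. Your fallback does not rescue this: $K(Q_1;Q_2)$ genuinely produces $\operatorname{tr}(\mathbf{B}^{-1})-p+\log\det\mathbf{B}=\sum_j f(\lambda_j)$. That is the same failing $f$, on top of the lossy comparison of mean terms.

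The repair is simply to use the correct eigenvalues. Then $\operatorname{tr}(\bA)-p-\log\det\bA=\sum_j\{\lambda_j-\log(1+\lambda_j)\}$, and what you need is $\log(1+\lambda)\ge\lambda-\lambda^2$ for $|\lambda|\le0.684$, which is precisely the inequality the paper invokes. For this, $h(\lambda)=\lambda^2-\lambda+\log(1+\lambda)$ has $h(0)=0$ and $h'(\lambda)=\lambda(2\lambda+1)/(1+\lambda)$. So $h$ increases on $(-1,-1/2)$, decreases on $(-1/2,0)$, and increases on $(0,\infty)$. Thus $h\ge0$ on $[\lambda_0,\infty)$, where $\lambda_0\approx-0.6838$ is its unique zero in $(-1,-1/2)$. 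That zero is the origin of the constant $0.684$, which is a slight rounding up: $h(-0.684)\approx-1.6\times10^{-4}$. This is a negligible slack issue that the paper's proof shares, and any marginally smaller threshold fixes it. Summing over $j$ gives $2K(Q_2;Q_1)\le\|\bOmega_1^{1/2}(\mu_1-\mu_2)\|_2^2+\|\mathbf{B}-\bI_p\|_{\rm F}^2$, and Pinsker finishes exactly as you wrote.
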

\begin{proof}
    By Pinsker's inequality, we have
    \begin{align*}
    d_{V} \left( Q_1, Q_2 \right) \leq \bigg( \dfrac{1}{2} K \left( Q_1, Q_2 \right) \bigg)^{1/2}.
    \end{align*}
    By the definition of KL divergence, $K \left( Q_1, Q_2 \right)$ is equal to
    \begin{align*}
    \dfrac{1}{2}  \bigg[  \left\| \bOmega_{1}^{1/2} \left( \mu_1 - \mu_2  \right) \right\|_{2}^{2} + \operatorname{tr} \left( \bOmega_{2}^{-1/2} \bOmega_{1} \bOmega_{2}^{-1/2} - \bI_{p}  \right) - \operatorname{logdet} \left( \bOmega_{2}^{-1/2} \bOmega_{1} \bOmega_{2}^{-1/2} \right) \bigg].
    \end{align*}
    Let $(\lambda_{j})_{j \in [p]}$ be eigenvalues of $\mathbf{B} = \bOmega_{2}^{-1/2} \bOmega_{1} \bOmega_{2}^{-1/2} - \bI_{p}$. Then, the last display is represented by
    \begin{align*}
    &\dfrac{1}{2}  \bigg[  \left\| \bOmega_{1}^{1/2} \left( \mu_1 - \mu_2  \right) \right\|_{2}^{2} 
    + \sum_{j=1}^{p} \lambda_{j} - \sum_{j=1}^{p} \log (1 + \lambda_{j}) \bigg] \\   
    &\leq  
    \dfrac{1}{2}  \bigg[  \left\| \bOmega_{1}^{1/2} \left( \mu_1 - \mu_2  \right) \right\|_{2}^{2} 
    + \sum_{j=1}^{p} \lambda_{j} - \sum_{j=1}^{p} (\lambda_{j} - \lambda_{j}^{2}) \bigg] 
    =
    \dfrac{1}{2}  \bigg[  \left\| \bOmega_{1}^{1/2} \left( \mu_1 - \mu_2  \right) \right\|_{2}^{2} 
    + \sum_{j=1}^{p} \lambda_{j}^{2} \bigg] \\   
    &=
    \dfrac{1}{2}  \bigg[  \left\| \bOmega_{1}^{1/2} \left( \mu_1 - \mu_2  \right) \right\|_{2}^{2} 
    + \operatorname{tr}(\mathbf{B}^{2}) \bigg] 
    =
    \dfrac{1}{2}  \bigg[  \left\| \bOmega_{1}^{1/2} \left( \mu_1 - \mu_2  \right) \right\|_{2}^{2} 
    + \left\| \mathbf{B} \right\|_{\rm F}^{2} \bigg]. 
    \end{align*}
    where the first inequality holds by $\max_{j \in [p]}|\lambda_{j}| \leq 0.684$.
    It follows that
    \begin{align*}
    d_{V} \left( Q_1, Q_2 \right) 
    \leq \bigg( \dfrac{1}{2} K \left( Q_1, Q_2 \right) \bigg)^{1/2}    
    \leq \dfrac{1}{2}  \bigg[  \left\| \bOmega_{1}^{1/2} \left( \mu_1 - \mu_2  \right) \right\|_{2}^{2} 
    + \left\| \mathbf{B} \right\|_{\rm F}^{2} \bigg]^{1/2}, 
    \end{align*}
    which completes the proof.
\end{proof}

\subsection{Lemmas for Section \ref{sec:assume_example}} 

We introduce the notion of smoothness for a given function. 
For $M > 0$, we say that a three times differentiable convex function $f : \bbR^{p} \to \bbR$ is $M$-self-concordant-like function if for any $x, z_1, z_2, z_3 \in \bbR^p$, $f$ satisfies 
\begin{align*}
    \left| \langle \nabla^3 f(x), z_1 \otimes z_2 \otimes z_3 \rangle \right|
    \leq 
    M \| z_1 \|_2 \| z_2 \|_{x} \| z_3 \|_{x},
\end{align*}
where $\| z \|_x = \sqrt{\langle z, \nabla^2 f(x) z \rangle}$ for $x, z \in \bbR^p$.
In the following lemma, we demenstrate that the logistic regression model loss $\ell_t$ enjoys the self-concordance-type smoothness.

\begin{lemma}[Self-concordance] \label{lemma:logit_self_concordance}
    For $t \in \bbN$, the loss function $\ell_t$ specified in \eqref{def:logit_loss} is $\sqrt{6} \| x_t \|_2$-self-concordant-like function.
\end{lemma}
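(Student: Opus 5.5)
We need to show that $\ell_t(\theta)=b(x_t^\top\theta)-Y_tx_t^\top\theta$ with $b(u)=\log(1+e^u)$ satisfies
\begin{align*}
\left|\langle\nabla^3\ell_t(x),z_1\otimes z_2\otimes z_3\rangle\right|\le \sqrt6\|x_t\|_2\|z_1\|_2\|z_2\|_x\|z_3\|_x .
\end{align*}
Everything reduces to a scalar inequality for $b$. Convexity and three-times differentiability are immediate, since $b''>0$ and $b$ is smooth.

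\textbf{Step 1: compute the derivatives.} The linear term vanishes after two derivatives, so
\begin{align*}
\nabla^2\ell_t(x)=b''(x_t^\top x)\,x_tx_t^\top,\qquad \nabla^3\ell_t(x)=b'''(x_t^\top x)\,x_t^{\otimes3}.
\end{align*}
Hence $\|z\|_x^2=b''(x_t^\top x)\langle x_t,z\rangle^2$, and
\begin{align*}
\langle\nabla^3\ell_t(x),z_1\otimes z_2\otimes z_3\rangle=b'''(u)\langle x_t,z_1\rangle\langle x_t,z_2\rangle\langle x_t,z_3\rangle,\qquad u=x_t^\top x.
\end{align*}

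\textbf{Step 2: reduce to a scalar bound.} By Cauchy--Schwarz, $|\langle x_t,z_1\rangle|\le\|x_t\|_2\|z_1\|_2$. Also $|\langle x_t,z_j\rangle|=\|z_j\|_x/\sqrt{b''(u)}$ for $j=2,3$, which is valid because $b''(u)>0$. The claim therefore follows once we show
\begin{align*}
|b'''(u)|\le\sqrt6\, b''(u)\qquad\text{for all } u\in\bbR.
\end{align*}

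\textbf{Step 3: verify the scalar bound.} Write $\sigma=\sigma(u)$ for the logistic function. Then $b''=\sigma(1-\sigma)$ and $b'''=\sigma(1-\sigma)(1-2\sigma)$. This gives $|b'''/b''|=|1-2\sigma|\le1\le\sqrt6$, which proves the bound.

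The constant $\sqrt6$ is therefore loose; presumably it comes from a generic bound in the literature, and any constant at least $1$ works. There is no real obstacle. The only point needing care is the division by $\sqrt{b''(u)}$, which is justified by the strict positivity of $b''$.
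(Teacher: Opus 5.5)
Your proof is correct, and the comparison with the paper is simple because the paper gives no argument here: it only cites Lemma 4 of Tran-Dinh, Li and Cevher (2015). That result is presumably stated for a broader class of losses, which would explain the looser constant $\sqrt6$.

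Your route is a self-contained computation. Because $\nabla^3\ell_t(\theta)=b'''(x_t^\top\theta)\,x_t^{\otimes 3}$ is a rank-one symmetric tensor, the three-direction inequality factorizes directly. Cauchy--Schwarz handles the $z_1$ factor, and the local norm absorbs the $z_2,z_3$ factors. So you verify the definition in exactly the three-direction form the paper uses, with no need for a polarization argument.

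The identity $b'''=b''(1-2\sigma)$ then gives the sharper parameter $\|x_t\|_2$. This implies the stated $\sqrt6\|x_t\|_2$, since the defining inequality only weakens as $M$ grows.

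The citation buys brevity and coverage of more general losses. Your argument buys transparency and a better constant. That constant would carry through, affecting only constants, to factors such as $e^{-\sqrt6 D_4}$ and $e^{\pm\sqrt6 C_8\|h\|_2}$ in Propositions \ref{prop:A1}, \ref{prop:A2} and \ref{prop:A5}.
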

\begin{proof}
    See Lemma 4 in \citet{tran2015composite}.
\end{proof}

\begin{lemma}[Smoothness of hessian] \label{lemma:smooth_hessian_logit}
    For $M > 0$, let $f: \bbR^p \to \bbR$ be a $M$-self-concordant-like function. Then, for any $u, h \in \bbR^p$, we have
    \begin{align*}
        \exp( - M \| h \|_2 ) \nabla^2 f(u) 
        \preceq
        \nabla^2 f(u + h) 
        \preceq            
        \exp( M \| h \|_2 ) \nabla^2 f(u) 
    \end{align*}
\end{lemma}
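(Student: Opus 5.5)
The plan is to reduce the matrix inequality to a scalar inequality along the segment from $u$ to $u+h$, and then integrate a logarithmic derivative.

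Fix $u, h \in \bbR^p$ and an arbitrary direction $z \in \bbR^p$. Define the scalar function
\begin{align*}
    \psi(\alpha) = \langle \nabla^2 f(u + \alpha h), z^{\otimes 2} \rangle = \| z \|_{u + \alpha h}^2, \quad \alpha \in [0,1].
\end{align*}
The target is $e^{-M\|h\|_2}\psi(0) \le \psi(1) \le e^{M\|h\|_2}\psi(0)$ for every $z$. Quantifying over $z$ then gives exactly the two L\"owner inequalities in the statement.

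Since $f$ is three times differentiable, $\psi$ is differentiable with
\begin{align*}
    \psi'(\alpha) = \langle \nabla^3 f(u+\alpha h), h \otimes z \otimes z \rangle .
\end{align*}
Applying the $M$-self-concordant-like definition with $z_1 = h$ and $z_2 = z_3 = z$ at the point $x = u+\alpha h$ gives
\begin{align*}
    |\psi'(\alpha)| \le M \|h\|_2 \, \psi(\alpha).
\end{align*}
Wherever $\psi > 0$, this says $|(\log \psi)'(\alpha)| \le M\|h\|_2$. Integrating over $[0,1]$ yields $|\log \psi(1) - \log \psi(0)| \le M \|h\|_2$, which is the desired two-sided bound.

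The only delicate point is when $\psi$ vanishes somewhere on $[0,1]$, since then $\log\psi$ is undefined. I would handle this with a Gr\"onwall argument rather than logarithms. The differential inequality $\psi' \le M\|h\|_2 \psi$ makes $e^{-M\|h\|_2\alpha}\psi(\alpha)$ non-increasing. The reverse inequality $\psi' \ge -M\|h\|_2\psi$ makes $e^{M\|h\|_2\alpha}\psi(\alpha)$ non-decreasing. Both monotonicity statements need only $\psi \ge 0$, which holds by convexity of $f$, and they give the two bounds directly, with no division by $\psi$.

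As a consistency check on the zero case: uniqueness for the linear ODE bound forces $\psi \equiv 0$ on the whole segment as soon as $\psi(\alpha_0) = 0$ for some $\alpha_0$. In that case both sides of the inequality vanish and the claim holds trivially. I expect this degenerate case to be the only step needing care; the rest is a direct application of the definition.
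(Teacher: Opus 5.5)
Your argument is correct. Restricting to the segment and applying the definition with $z_1 = h$, $z_2 = z_3 = z$ gives $|\psi'(\alpha)| \le M\|h\|_2\,\psi(\alpha)$, and the monotonicity of $e^{\mp M\|h\|_2\alpha}\psi(\alpha)$ then yields both L\"owner bounds without any difficulty at zeros of $\psi$. The paper gives no proof of its own and simply cites Theorem 4(b) of Tran-Dinh et al.\ (2015), whose argument is this same one-dimensional integration of the logarithmic derivative, so you have reproduced the cited result in self-contained form.
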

\begin{proof}
    See Theorem 4.(b) in \citet{tran2015composite}.
\end{proof}

\begin{lemma}[Smoothness of integrated hessian] \label{lemma:smooth_int_hessian_logit}
    For $M > 0$, let $f: \bbR^p \to \bbR$ be a $M$-self-concordant-like function. Then, for any $u, h \in \bbR^p$, we have
    \begin{align*}
        \dfrac{1 - e^{ -M \| h\|_2 }}{ M \| h\|_2 } \nabla^2 f(u) 
        \preceq
        \int_{0}^{1} \nabla^2 f(u + \alpha h) \rmd \alpha
        \preceq            
        \dfrac{e^{ M \| h\|_2 } - 1}{ M \| h\|_2 } \nabla^2 f(u) 
    \end{align*}
\end{lemma}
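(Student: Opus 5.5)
The plan is to reduce the statement to a one-dimensional comparison along the segment $\{u + \alpha h : \alpha \in [0,1]\}$, using Lemma \ref{lemma:smooth_hessian_logit} pointwise and then integrating in $\alpha$. Fix $u, h \in \bbR^p$. For $h = 0$ the statement is trivial if the ratios are read as their limit $1$. So assume $h \neq 0$ and write $c = M\|h\|_2 > 0$.

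Apply Lemma \ref{lemma:smooth_hessian_logit} with base point $u$ and increment $\alpha h$. Since $\|\alpha h\|_2 = \alpha\|h\|_2$, this gives
\begin{align*}
    e^{-\alpha c}\, \nabla^2 f(u) \preceq \nabla^2 f(u + \alpha h) \preceq e^{\alpha c}\, \nabla^2 f(u), \quad \forall \alpha \in [0,1].
\end{align*}
The Loewner order is preserved under integration of continuous matrix-valued functions against a nonnegative measure. To check this, test against an arbitrary $z \in \bbR^p$: the map $\alpha \mapsto \langle z, \nabla^2 f(u+\alpha h) z\rangle$ is continuous, because $f$ is three times differentiable, and the scalar inequalities integrate directly.

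Integrating over $\alpha \in [0,1]$ then uses
\begin{align*}
    \int_0^1 e^{-\alpha c}\,\rmd\alpha = \frac{1 - e^{-c}}{c}, \qquad \int_0^1 e^{\alpha c}\,\rmd\alpha = \frac{e^{c} - 1}{c}.
\end{align*}
Since $\nabla^2 f(u)$ does not depend on $\alpha$, these constants factor out and give exactly the two claimed bounds.

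There is essentially no obstacle here. The only point to check is that Lemma \ref{lemma:smooth_hessian_logit} holds for every increment, including $\alpha h$, with the factor $\exp(\pm M\|\alpha h\|_2)$. That is exactly how it is stated, for all $u, h$.
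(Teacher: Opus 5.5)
Your proof is correct: integrating the pointwise comparison $e^{-\alpha M\|h\|_2}\nabla^2 f(u) \preceq \nabla^2 f(u+\alpha h) \preceq e^{\alpha M\|h\|_2}\nabla^2 f(u)$ from Lemma~\ref{lemma:smooth_hessian_logit} over $\alpha\in[0,1]$, tested against a fixed $z$, gives exactly the two stated constants. The paper gives no argument of its own and only cites Corollary~2 of \citet{sun2019generalized}, which follows in the same way from the corresponding pointwise Hessian bound, so your route is essentially the paper's, made self-contained given the preceding lemma.
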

\begin{proof}
    See Corollary 2 in \citet{sun2019generalized}.
\end{proof}

\begin{lemma} \label{lemma:b_ratio}
    Let $b(\cdot) = \log( 1 + \exp(\cdot))$. Then, for any $\eta_1, \eta_2 \in \bbR$, we have
    \begin{align*}
        \dfrac{ b''(\eta_1) }{ b''(\eta_2) } \leq e^{3 |\eta_1 - \eta_2| }.
    \end{align*}
\end{lemma}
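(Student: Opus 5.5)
The claim to prove is Lemma \ref{lemma:b_ratio}: for $b(\eta)=\log(1+e^\eta)$, $b''(\eta_1)/b''(\eta_2)\le e^{3|\eta_1-\eta_2|}$. I will show that $\log b''$ is Lipschitz with constant $1$. That gives the sharper bound $e^{|\eta_1-\eta_2|}$, and the stated constant $3$ follows immediately.

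First I compute the derivatives. Write $\sigma(\eta)=e^\eta/(1+e^\eta)$, so that $b'=\sigma$ and $b''=\sigma(1-\sigma)>0$. Then
\begin{align*}
    b'''(\eta) = \sigma(\eta)\big(1-\sigma(\eta)\big)\big(1-2\sigma(\eta)\big) = b''(\eta)\big(1-2\sigma(\eta)\big).
\end{align*}
Since $b''>0$ everywhere, $\log b''$ is differentiable on $\bbR$, and
\begin{align*}
    \frac{\rmd}{\rmd\eta}\log b''(\eta) = 1-2\sigma(\eta) \in (-1,1),
\end{align*}
because $\sigma(\eta)\in(0,1)$.

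Next I apply the mean value theorem to $\log b''$ on the interval between $\eta_2$ and $\eta_1$:
\begin{align*}
    \log b''(\eta_1)-\log b''(\eta_2) \le \sup_{\eta}\big|1-2\sigma(\eta)\big|\,|\eta_1-\eta_2| \le |\eta_1-\eta_2| \le 3|\eta_1-\eta_2|.
\end{align*}
Exponentiating both sides gives the lemma.

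None of these steps is a real obstacle. The only place that needs care is the derivative identity $b'''=b''(1-2\sigma)$; one could alternatively reach it through the self-concordance bound of Lemma \ref{lemma:logit_self_concordance}, but the direct computation above is simpler.
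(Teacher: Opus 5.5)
Your proof is correct and complete. The paper gives no argument of its own here; it only cites Lemma 34 of \citet{lee2026online}. Your computation is therefore a self-contained replacement. The identity $b''' = b''(1-2\sigma)$ gives $(\log b'')' = 1-2\sigma \in (-1,1)$, and the mean value theorem then yields $b''(\eta_1)/b''(\eta_2) \le e^{|\eta_1-\eta_2|}$.

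The constant $1$ is in fact optimal. Since $b''(\eta) = e^{-|\eta|}(1+e^{-|\eta|})^{-2}$ behaves like $e^{-|\eta|}$ as $|\eta|\to\infty$, no smaller Lipschitz constant for $\log b''$ is possible. The factor $3$ in the statement is therefore pure slack, and it only affects constants such as $\exp(3C_5)$ in the proof of Proposition \ref{prop:A3-b}.

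The alternative you mention also works. Specializing Lemma \ref{lemma:logit_self_concordance} to $p=1$, $x_t=1$ gives $|b'''|\le\sqrt{6}\,b''$, and Lemma \ref{lemma:smooth_hessian_logit} then yields the bound with constant $\sqrt{6}<3$. That route relies on two external results, whereas yours needs only a one-line derivative computation and gives a sharper constant.
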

\begin{proof}
    See Lemma 34 in \citet{lee2026online}.
\end{proof}

Let $\bbF = (\cF_{t})_{t \in \bbN}$ be a filtration.
We say $(Z_t, \cF_t)_{t \in \bbN}$ is martingale difference sequence if $(Z_t)_{t \in \bbN}$ is adapted to $\bbF$ and 
\begin{align*}
    \bbE |Z_t| < + \infty, \quad \bbE \big[ Z_{t+1} \mid \cF_t \big] = 0.
\end{align*}

\begin{lemma} \label{lemma:MDS_subexp}
    Let $(\xi_t, \cF_t)_{t \in \bbN}$ be a martingale difference sequence. 
    For $t \in \bbN$, suppose that 
    \begin{align*}
        \bbE \big[ e^{\lambda \xi_t} \mid \cF_{t-1} \big] \leq e^{\lambda^2 \nu_t^2/2} \quad {\rm a.s.} \quad \forall |\lambda| < 1/\alpha_t,
    \end{align*}
    where $\nu_t, \alpha_t > 0$.
    Let $n \in \bbN$.
    Then, we have, for any $\rmx > 0$,
    \begin{align*}
        &\bbP \left( \bigg| \sum_{t=1}^{n} \xi_t \bigg| \geq K \left( \sqrt{ V_n (\rmx + 1) } + \alpha_{\max} (\rmx + 1) \right) \right) \leq 2e^{-\rmx}, \\
        &\bbP \left( \bigg| \sum_{s=1}^{t} \xi_s \bigg| \geq K \left( \sqrt{ V_n (\rmx_t + 1) } + \alpha_{\max} (\rmx_t + 1) \right) \text{ for some } t \in \bbN \right) \leq e^{-\rmx}, 
    \end{align*}
    where $V_n = \sum_{t=1}^{n} v_t^2$, $\alpha_{\max} = \max_{t \in [n]} \alpha_t$, $\rmx_t = \rmx + \log t$, and $K > 0$ is a universal constant.
\end{lemma}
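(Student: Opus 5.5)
We will prove Lemma \ref{lemma:MDS_subexp} by the standard exponential supermartingale (Chernoff--Bernstein) argument for martingale differences with conditional sub-exponential increments, followed by a union bound over time for the uniform statement. We read $v_t$ in the definition of $V_n$ as $\nu_t$.

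For the fixed-$n$ bound, fix $\lambda$ with $0<\lambda<1/\alpha_{\max}$ and set $M_t=\exp\bigl(\lambda\sum_{s\le t}\xi_s-\tfrac{\lambda^2}{2}\sum_{s\le t}\nu_s^2\bigr)$. Since $|\lambda|<1/\alpha_s$ for every $s\le n$, the conditional mgf hypothesis gives $\bbE[M_t\mid\cF_{t-1}]\le M_{t-1}$, so $\bbE M_n\le 1$. Here we use that $\nu_t,\alpha_t$ are deterministic, or at least $\cF_{t-1}$-measurable. If they are only predictable, $V_n$ is random and we would instead use the classical self-normalized argument: apply it on the events $\{V_n\le 2^j\}$ and union-bound over a dyadic grid, which costs an extra $\log$ factor. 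We therefore state the deterministic case.

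Markov's inequality then gives $\bbP\bigl(\sum_{t\le n}\xi_t\ge \tfrac{\lambda V_n}{2}+\tfrac{\rmx}{\lambda}\bigr)\le e^{-\rmx}$. Optimize $\lambda$ under the constraint:
\begin{itemize}
\item if $\sqrt{2\rmx/V_n}<1/\alpha_{\max}$, take $\lambda=\sqrt{2\rmx/V_n}$, giving the deviation $\sqrt{2V_n\rmx}$;
\item otherwise take $\lambda=1/(2\alpha_{\max})$, giving a deviation at most $\tfrac{V_n}{4\alpha_{\max}}+2\alpha_{\max}\rmx\lesssim \alpha_{\max}\rmx$, because $V_n\le 2\alpha_{\max}^2\rmx$ in this regime.
\end{itemize}
Applying the same argument to $-\xi_t$ and taking a union bound yields the two-sided bound with probability at least $1-2e^{-\rmx}$. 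Replacing $\rmx$ by $\rmx+1$ absorbs constants into the universal $K$.

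For the uniform-in-$t$ statement, apply the fixed-horizon bound at each $t$ with level $\rmx_t+c$, where $\rmx_t=\rmx+\log t$. For $t\le n$ we have $V_t\le V_n$ and the relevant maximum of $\alpha_s$ is at most $\alpha_{\max}$. The failure probability at time $t$ is at most $2e^{-\rmx-c}t^{-1}$. The harmonic sum diverges, so we instead use $\rmx_t+\log t$, or a dyadic peeling over $t\in[2^j,2^{j+1})$. The latter is cleaner: on each block apply Doob's maximal inequality to the supermartingale $M_t$ (Ville's inequality), which controls $\sup_{t\in\text{block}}$ at the cost of a single level $\rmx+\log 2^{j}+2\log(j+1)$. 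Summing $\sum_j (j+1)^{-2}e^{-\rmx}\lesssim e^{-\rmx}$ and adjusting the constant $K$ (since $\log 2^j\asymp \log t$ on the block) gives total probability at most $e^{-\rmx}$.

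The main obstacle is this step. The optimizing $\lambda$ is fixed over the block, so Ville's inequality applies directly, but the two-regime choice of $\lambda$ must use $V_n$ and $\alpha_{\max}$, which are monotone upper bounds valid over the whole block. This is why the statement uses $V_n$ and $\alpha_{\max}$ rather than $V_t$. The constant is then tuned to be universal.
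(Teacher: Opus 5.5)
Your argument is correct. For the fixed-$n$ bound it is the same as the paper's: the paper simply cites Theorem 2.19 of \citet{wainwright2019high}, whose proof is exactly your exponential supermartingale with the two-regime choice of $\lambda$.

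The routes differ in the uniform-in-$t$ step. The paper uses no maximal inequality. It applies the fixed-horizon bound separately at each $t$ with level $\widetilde\rmx_t=\rmx+2\log t+\log(\pi^2/3)$ and takes a plain union bound, $\sum_t 2e^{-\widetilde\rmx_t}=e^{-\rmx}\sum_t 6/(\pi^2t^2)=e^{-\rmx}$. This is precisely the ``$\rmx_t+\log t$'' option you mention and then set aside. You instead use Ville's inequality on dyadic blocks.

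Both arguments are valid only for $t\le n$, which is all that can hold with $V_n$ and $\alpha_{\max}$ on the right-hand side. Your observation that $V_t\le V_n$ for $t\le n$ is the needed restriction, so you should intersect your blocks with $[n]$. The paper's proof relies on the same restriction implicitly.

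Once you invoke Ville, the peeling is in fact unnecessary. Since $V_n$ and $\alpha_{\max}$ do not depend on $t$, take a single $\lambda$ tuned to $(V_n,\alpha_{\max},\rmx)$ and apply Ville's inequality to $(M_t)_{t\le n}$ and to its analogue for $-\xi_t$. This already gives $\sup_{t\le n}|\sum_{s\le t}\xi_s|\lesssim\sqrt{V_n\rmx}+\alpha_{\max}\rmx$ with probability at least $1-2e^{-\rmx}$ (hence $1-e^{-\rmx}$ after shifting $\rmx$ by $\log 2$), with no $\log t$ at all.

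So your closing rationale is reversed: the $\log t$ inflation is not forced by using $V_n$; it is the price of a bound that adapts to $t$. That adaptivity is what the paper's per-$t$ union bound buys. Run at horizon $t$, it yields the version with $V_t$ and $\max_{s\le t}\alpha_s$ in place of $V_n$ and $\alpha_{\max}$, valid for all $t\in\bbN$. That is the form actually invoked in Proposition~\ref{prop:A3-b}. Your block-wise Ville argument would only deliver the variance proxy at each block's right endpoint there.
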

\begin{proof}
    For a proof of the first assertion, see Theorem 2.19 in \citet{wainwright2019high}.
    From the first assertion, we have for any $\widetilde \rmx > 0$ and for a fixed $t \in \bbN$
    \begin{align*}
        \bbP \left( \bigg| \sum_{s=1}^{t} \xi_s \bigg| \geq c_1 \left( \sqrt{ V_n (\widetilde \rmx + 1) } + \alpha_{\max} (\widetilde \rmx + 1) \right) \right) \leq 2e^{-\widetilde \rmx},
    \end{align*}
    where $c_1 > 0$ is a universal constant.
    Let $\widetilde \rmx_t = \rmx + 2\log t + \log(\pi^2/3)$ with $\rmx > 0$.
    Then, we have
    \begin{align*}
        &\bbP \left( \bigg| \sum_{s=1}^{t} \xi_s \bigg| \geq c_1 \left( \sqrt{ V_n (\widetilde \rmx_t + 1) } + \alpha_{\max} (\widetilde \rmx_t + 1) \right) \text{ for some } t \in \bbN \right)  \\
        &\qquad \leq 
            \sum_{t=1}^{\infty}
            \bbP \left( \bigg| \sum_{s=1}^{t} \xi_s \bigg| \geq c_1 \left( \sqrt{ V_n (\widetilde \rmx_t + 1) } + \alpha_{\max} (\widetilde \rmx_t + 1) \right) \right) \\
        &\qquad \leq \sum_{t=1}^{\infty} 2e^{ - \widetilde \rmx_t}
        = \sum_{t=1}^{\infty} 2e^{ - (\rmx + 2\log t + \log(\pi^2/3))}
        = \sum_{t=1}^{\infty} \dfrac{6}{\pi^2 t^{2}} 
        = e^{-\rmx} \sum_{t=1}^{\infty} \dfrac{6}{\pi^2 t^{2}} 
        = e^{-\rmx}.
    \end{align*}
    Since 
    \begin{align*}
        c_1 \left( \sqrt{ V_n (\widetilde \rmx_t + 1) } + \alpha_{\max} (\widetilde \rmx_t + 1) \right)
        \leq 
        c_2 c_1 \left( \sqrt{ V_n (\rmx + \log t + 1) } + \alpha_{\max} (\rmx + \log t + 1) \right)
    \end{align*}
    for some universal constant $c_2 > 0$, we complete the proof of the second assertion.
\end{proof}

\begin{lemma} \label{lemma:elliptic_potential}
    Let $\tau \in \bbN$.
    Let $\bOmega_{\tau} \in \symmPD$ and $(\bH_t)_{t \in \bbN}$ with $\bH_t \in \symmPSD$ for any $t \in \bbN$. 
    Suppose that
    \begin{align*}
        \bOmega_t = \bOmega_{t-1} + \bH_t, \quad \forall t \in \bbN \text{ with } t > \tau.
    \end{align*}
    Then, for any $t \in \bbN$ with $\tau < t$, we have
    \begin{align*}
        \sum_{s= \tau+1}^{t} \langle \bOmega_{s}^{-1}, \bH_s \rangle 
        \leq 
        \logdet(\bOmega_t) - \logdet(\bOmega_{\tau}).
    \end{align*}
\end{lemma}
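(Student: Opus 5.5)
The approach is the standard elliptic-potential argument: compare each summand $\langle \bOmega_s^{-1}, \bH_s \rangle$ with the increment $\logdet(\bOmega_s) - \logdet(\bOmega_{s-1})$ and then telescope. Fix $s > \tau$. Since $\bOmega_{s-1} \in \symmPD$ (by induction from $\bOmega_\tau \in \symmPD$ and $\bH_s \in \symmPSD$), also $\bOmega_s = \bOmega_{s-1} + \bH_s \succeq \bOmega_{s-1} \succ 0$. Hence all inverses and log-determinants below are well defined.

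The key step is the one-step inequality
\begin{align*}
    \langle \bOmega_s^{-1}, \bH_s \rangle \leq \logdet(\bOmega_s) - \logdet(\bOmega_{s-1}).
\end{align*}
I will prove it by concavity of $\logdet$ on $\symmPD$. Its gradient at $\mathbf{X}$ is $\mathbf{X}^{-1}$, so for $\mathbf{X}, \mathbf{Y} \in \symmPD$ we have $\logdet(\mathbf{Y}) \leq \logdet(\mathbf{X}) + \langle \mathbf{X}^{-1}, \mathbf{Y} - \mathbf{X} \rangle$. Applying this with $\mathbf{X} = \bOmega_s$ and $\mathbf{Y} = \bOmega_{s-1}$ gives
\begin{align*}
    \logdet(\bOmega_{s-1}) \leq \logdet(\bOmega_s) - \langle \bOmega_s^{-1}, \bH_s \rangle,
\end{align*}
which is exactly the claim.

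As an alternative check, write $\mathbf{B} = \bOmega_s^{-1/2} \bH_s \bOmega_s^{-1/2}$, which satisfies $0 \preceq \mathbf{B} \preceq \bI_p$. Then $\bOmega_{s-1} = \bOmega_s^{1/2}(\bI_p - \mathbf{B})\bOmega_s^{1/2}$, so the right-hand side of the one-step inequality equals $-\sum_j \log(1 - \beta_j)$, where $\beta_j \in [0,1)$ are the eigenvalues of $\mathbf{B}$. The left-hand side equals $\sum_j \beta_j$, and the inequality follows from $-\log(1-\beta) \geq \beta$. The eigenvalues satisfy $\beta_j < 1$ because $\bOmega_{s-1} \succ 0$.

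Summing the one-step inequality over $s = \tau+1, \ldots, t$ telescopes the right-hand side to $\logdet(\bOmega_t) - \logdet(\bOmega_\tau)$, which proves the lemma.

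No step is a real obstacle. The only point needing care is the positive definiteness of every $\bOmega_s$, which guarantees that $\bOmega_s^{-1}$ exists and that $\beta_j < 1$. This follows directly from the hypotheses.
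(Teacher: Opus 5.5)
Your proof is correct. Your ``alternative check'' is exactly the paper's argument: it sets $\bA_s = \bOmega_s^{-1/2}\bH_s\bOmega_s^{-1/2}$, writes $\bOmega_{s-1} = \bOmega_s^{1/2}(\bI_p - \bA_s)\bOmega_s^{1/2}$, applies $z \le -\log(1-z)$ to the eigenvalues in $[0,1)$, and telescopes. Your primary route, the tangent-plane inequality for the concave map $\logdet$, is a compact repackaging of the same one-step bound, so the two approaches coincide in substance.
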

\begin{proof}
    Let $\tau \in \bbN$ and $t \in \bbN$ with $\tau < t$.
    For $s \in \bbN$ with $s \geq \tau$, let
    \begin{align*}
        \bA_s = \bOmega_s^{-1/2} \bH_s \bOmega_s^{-1/2}.
    \end{align*}
    Then, we have
    \begin{align*}
        \bOmega_{s-1} = \bOmega_{s} - \bH_s = \bOmega_s^{1/2} (\bI_p - \bA_s) \bOmega_s^{1/2},
    \end{align*}
    which implies that
    \begin{align*}
        \det(\bOmega_{s-1}) = \det(\bOmega_{s})\det(\bI_p - \bA_s).
    \end{align*}
    It follows that
    \begin{align*}
        \logdet(\bOmega_{s}) - \logdet(\bOmega_{s-1}) = -\logdet(\bI_p - \bA_s).
    \end{align*}
    Let $(\lambda_j(\bA_s))_{j=1}^{p}$ be a sequence of all eigenvalues of $\bA_s$.
    Since $\bA_s \in \symmPSD$ and $\bOmega_s \succ \bH_s$, we have $\lambda_j(\bA_s) \in [0, 1)$.
    Hence, we have
    \begin{align*}
        \operatorname{tr}(\bA_s)
        &= \sum_{j=1}^{p} \lambda_j(\bA_s)
        \leq \sum_{j=1}^{p} -\log \big( 1 - \lambda_j(\bA_s) \big)
        = -\log \prod_{j=1}^{p} \big( 1 - \lambda_j(\bA_s) \big) \\
        &= -\logdet ( 1 - \bA_s ),
    \end{align*}
    where the inequality holds by 
    \begin{align*}
        z \leq - \log(1- z), \quad \forall z \in [0, 1).
    \end{align*}
    Also, we have $\operatorname{tr}(\bA_s) = \operatorname{tr}(\bOmega_{s}^{-1} \bH_s) = \langle \bOmega_{s}^{-1}, \bH_s \rangle$. It follows that
    \begin{align*}
        \logdet(\bOmega_{s}) - \logdet(\bOmega_{s-1}) = \langle \bOmega_{s}^{-1}, \bH_s \rangle.
    \end{align*}
    By telescoping sum, therefore, we have
    \begin{align*}
        \sum_{s= \tau+1}^{t} \langle \bOmega_{s}^{-1}, \bH_s \rangle 
        \leq 
        \logdet(\bOmega_t) - \logdet(\bOmega_{\tau}).
    \end{align*}
    This completes the proof.
\end{proof}

\section{Algorithm derivations}

For $Q = N(\mu, \bOmega^{-1})$, we define the surrogate negative ELBO by
\begin{align*}
    F(Q) = F(\mu, \bOmega^{-1}) = \mathbb{E}_{Q} \big[ \widetilde \ell_t(\theta)\big] + K \big(Q ;\: \Pi_{t-1}\big).
\end{align*}
The quadratic structure of $\widetilde{\ell}_t(\cdot)$ yields
\begin{align*}
    \bbE_{Q}\big[\widetilde \ell_t(\theta)\big] = 
    \langle \nabla \ell_t(\theta_{t-1}) - \nabla^2 \ell_t(\theta_{t-1})\theta_{t-1}, \mu \rangle
    + \frac{1}{2} \operatorname{tr} \left(
    \nabla^2 \ell_t(\theta_{t-1})\big(\mu^{\otimes 2} + \bOmega^{-1}\big) \right)
    + C,
\end{align*}
where $C$ is a constant independent of $\mu$ and $\bOmega$. 
Moreover, the KL divergence between $Q$ and $\Pi_{t-1}$ is given by
\begin{align*}
   \frac{1}{2} 
   \Big( 
    \log |\bOmega_{t-1}^{-1}|
    -\log |\bOmega^{-1}|
    + \langle \bOmega_{t-1}, (\mu - \theta_{t-1})^{\otimes 2} \rangle
    + \operatorname{tr} ( \bOmega_{t-1}\bOmega^{-1}) - p 
    \Big).
\end{align*}
Combining these expressions, the gradients of $F(\mu, \bOmega^{-1})$ are given by
\begin{align*}
    \nabla_\mu \, F(\mu, \bOmega^{-1})
        &= \nabla \ell_t (\theta_{t-1})
            + \nabla^2 \ell_t(\theta_{t-1}) (\mu-\theta_{t-1}) 
            + \bOmega_{t-1} (\mu - \theta_{t-1}), \\
    \nabla_{\bOmega^{-1}} \, F(\mu, \bOmega^{-1})
        &= \frac{1}{2} \big( \nabla^2 \ell_t(\theta_{t-1}) -\bOmega + \bOmega_{t-1} \big).
\end{align*}
Solving the first-order optimality conditions yields the closed-form updates:
\begin{align*}
\begin{aligned} 
    \bOmega_t &= \bOmega_{t-1} + \nabla^2 \ell_t(\theta_{t-1}), \\
    \theta_t &= \theta_{t-1} - \bOmega_t^{-1} \nabla \ell_t  (\theta_{t-1}).
\end{aligned}    
\end{align*}

\end{appendix}

\end{document}